\providecommand{\R}{}
\providecommand{\N}{}
\renewcommand{\R}{\mathbb{R}}
\renewcommand{\N}{{\mathbb N}}
\newcommand{\E}[1]{{\mathbf E}\left[#1\right]}										
\newcommand{\e}{{\mathbf E}}
\newcommand{\p}[1]{{\mathbf P}\left\{#1\right\}}
\newcommand{\I}[1]{{\mathbf 1}_{[#1]}}
\newcommand{\set}[1]{\left\{ #1 \right\}}
\newcommand{\probC}[2]{\mathbf{P}\set{#1 \; \left|  \; #2 \right. }}
\newcommand{\expC}[2]{\mathbf{E}\set{#1 \; \left|  \; #2 \right. }}
\newcommand\cT{{\mathcal T}}
\newcommand{\bN}{\mathbf{N}}
\newcommand{\eqdist}{\ensuremath{\stackrel{\mathrm{d}}{=}}}
\newcommand{\pran}[1]{\left(#1\right)}
\providecommand{\eps}{}
\renewcommand{\eps}{\epsilon}
\providecommand{\ora}[1]{}
\renewcommand{\ora}[1]{\overrightarrow{#1}}
\DeclareRobustCommand{\SkipTocEntry}[5]{} 
\newtheorem{thm}{Theorem}
\newtheorem{lem}[thm]{Lemma}
\newtheorem{prop}[thm]{Proposition}
\newtheorem{cor}[thm]{Corollary}
\newtheorem{conj}{Conjecture}
\newcommand{\vset}{\ensuremath{v}}
\newcommand{\eset}{\ensuremath{e}}
\newcommand{\rt}{\ensuremath{r}}
\newcommand{\dseq}{\ensuremath{\mathrm{d}}}
\newcommand{\wid}{\ensuremath{\mathrm{wid}}}
\newcommand{\hyt}{\ensuremath{\mathrm{ht}}}
\providecommand{\deg}{}
\renewcommand{\deg}{\ensuremath{d}}
\renewcommand{\dseq}{\ensuremath{\mathrm{d}}}
\newcommand{\tset}{\ensuremath{\mathscr{T}}}
\newcommand{\dstat}{\ensuremath{\mathrm{n}}}
\newcommand{\spine}{\ensuremath{S}}
\newcommand{\bd}{\ensuremath{\mathbf{d}}}
\newcommand{\varmod}{\ensuremath{\mathrm{v}}}
\newcommand{\oexp}{\ensuremath{\mathrm{oe}}}
\newcommand{\wts}{\ensuremath{\mathrm{w}}}
\newcommand{\sgt}{\ensuremath{\mathcal{T}}}
\numberwithin{equation}{section}
\numberwithin{thm}{section}
\definecolor{ccel}{rgb}{0.36, 0.54, 0.66}
\begin{document}
\setstretch{1.2}
\title{Universal height and width bounds for random trees} 
\address{Department of Mathematics and Statistics, McGill University, Montr\'eal, Canada}
\author{Louigi Addario-Berry}
\email{louigi.addario@mcgill.ca}
\author{Anna Brandenberger}
\email{anna.brandenberger@mail.mcgill.ca}
\author{Jad Hamdan}
\email{jad.hamdan@mail.mcgill.ca}
\author{C\'eline Kerriou}
\email{celine.kerriou@mail.mcgill.ca}

\date{September 25, 2021; revised April 25, 2022} 

\keywords{Random trees, Galton--Watson trees, Bienaym\'e trees, simply generated trees, width, height.}
\subjclass[2010]{60C05,60J80,05C05} 

\begin{abstract} 

We prove non-asymptotic stretched exponential tail bounds on the height of a randomly sampled node in a random combinatorial tree, which we use to prove bounds on the heights and widths of random trees from a variety of models. Our results allow us to prove a conjecture and settle an open problem of Janson~\cite{MR2908619}, and nearly prove another conjecture and settle another open problem from the same work (up to a polylogarithmic factor). 

The key tool for our work is an equivalence in law between the degrees along the path to a random node in a random tree with given degree statistics, and a random truncation of a size-biased ordering of the degrees of such a tree. We also exploit a Poissonization trick introduced by Camarri and Pitman \cite{MR1741774} in the context of inhomogeneous continuum random trees, which we adapt to the setting of random trees with fixed degrees. 

Finally, we propose and justify a change to the conventions of branching process nomenclature: the name ``Galton--Watson trees'' should be permanently retired by the community, and replaced with the name ``Bienaym\'e trees''. 
\end{abstract}

\maketitle



\section{\bf Introduction}\label{sec:intro} 


This paper concerns the height and width of random plane trees, and applications of bounds thereof to the study of random simply generated trees and to the family trees of branching processes. Our results in particular allow us to settle two conjectures from \cite{MR2908619}, and to nearly settle two others. 

By a plane tree, we mean a finite rooted tree $t=(\vset(t),\eset(t))$ in which the set of children of each node is endowed with a left-to-right order. The root of $t$ is denoted $\rt(t)$. The {\em degree} of a node $v \in \vset(t)$, denoted $\deg_t(v)$, is its number of children in $t$, so leaves have degree $0$ and all other nodes have strictly positive degree. 

The {\em degree statistics} of $t$ is the sequence $\dstat_t=(\dstat_t(c),c \ge 0)$, where $\dstat_t(c) = |\{v \in \vset(t): \deg_t(v)=c\}|$ is the number of nodes of $t$ with $c$ children. 
Note that 
\[
|v(t)| 
= \sum_{c \ge 0} \dstat_t(c)
=1+\sum_{c \ge 0} c\dstat_t(c)
=1+|e(t)| \, .
\]
A sequence $\dstat=(\dstat(c),c \ge 0)$ is the degree statistics of some tree if and only if $\sum_{c \ge 0} \dstat(c)=1+\sum_{c \ge 0} c\dstat(c)$. For such sequences, we write $\tset_{\dstat}$ for the set of plane trees with degree statistics $\dstat$, and write
\[
\tset_{\dstat}^{\bullet}
= \{(t,v): t \in \tset_{\dstat}, v \in v(t)\}.
\]
A {\em marked tree} is a pair $(t,v)$ where $t$ is a plane tree and $v \in v(t)$; so the elements of $\tset_{\dstat}^{\bullet}$ are precisely the marked trees with degree statistics $\dstat$.

For a node $v \in \vset(t)$, the {\em height} of $v$, denoted $|v|$, is the graph distance from $v$ to $\rt(t)$. The height of $t$, denoted $\hyt(t)$, is $\max(|v|: v \in \vset(t))$. The {\em width} of $t$ at level $k$, denoted $\wid(t,k)$, is $|\{v \in t: |v|=k\}|$, and the width of $t$, denoted $\wid(t)$, is $\max(\wid(t,k),k \ge 0)$. 
 
Given a a sequence $\dstat=(\dstat(c),c \ge 0)$ of non-negative real numbers, for $p>0$, we write $|\dstat|_p = (\sum_{c \ge 0} c^p \dstat(c))^{1/p}$. Note that for a plane tree $t$, we have $|\dstat_t|_1+1 = |t|$. 

We prove the following non-asymptotic tail bounds on the height of a randomly sampled node in a random plane tree with given degree statistics. For a finite set $S$ we write $X \in_u S$ to mean that $X$ is a uniformly random element of the set $S$. 
\begin{thm}
\label{thm:main}
Fix degree statistics $\dstat=(\dstat(c),c \ge 0)$ and let $(T,V) \in_u \tset_\dstat^{\bullet}$. Then for all $\beta > 17^{3/2}$, 
\[
\p{|V| > \beta \frac{|\dstat|_1}{(|\dstat|_2^2-\dstat(1))^{1/2}}}
\le 
\exp\pran{-\frac{\beta^{1/3}}{3}\frac{|\dstat|_1}{(|\dstat|_2^2-\dstat(1))^{1/2}}} + 2\exp\pran{-\frac{\beta^{2/3}}{24}}\, ,
\]
and if $\dstat(1)=0$ then for all $\ell \ge 1$, 
\[
\p{|V| \ge \ell}
\le 
\exp\pran{-\frac{\ell^2}{2|\dstat|_1}}\, .
\]
\end{thm}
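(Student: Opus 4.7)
The approach is built on two ingredients: a spine-decomposition formula giving an exact expression for $\p{|V|=\ell}$ as an expectation over uniform random tuples of distinct labels, combined with the size-biased equivalence emphasized in the abstract and a Poissonization \`a la Camarri--Pitman. Throughout let $n=1+|\dstat|_1$ and write the degree multiset of $T$ as $(d_1,\ldots,d_n)$.

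The first task is to derive an exact identity for $\p{|V|=\ell}$. I decompose a marked tree $(t,v)\in\tset_\dstat^{\bullet}$ with $|v|=\ell$ into its spine $\rt(t)=a_0,a_1,\ldots,a_\ell=v$ of length $\ell+1$, the positions $p_i\in\{1,\ldots,D_i\}$ of $a_{i+1}$ among the $D_i$ children of $a_i$ (with $D_i=\deg_t(a_i)$), and the complementary ordered forest of $\sum_{i=0}^\ell D_i-\ell$ trees on $n-\ell-1$ nodes with residual degree statistics. The Raney/cycle-lemma formula $(k/m)\binom{m}{\mu(0),\mu(1),\ldots}$ for the ordered-forest count, combined with $|\tset_\dstat^\bullet|=\binom{n}{\dstat(0),\dstat(1),\ldots}$, yields
\[
\p{|V|=\ell}=\frac{1}{n-\ell-1}\,\E{\prod_{i=0}^{\ell-1}d_{A_i}\,\Bigl(\sum_{i=0}^{\ell}d_{A_i}-\ell\Bigr)},
\]
where $(A_0,\ldots,A_\ell)$ is a uniformly random ordered $(\ell+1)$-tuple of distinct labels from $\{1,\ldots,n\}$ and $d_j$ is the degree of label $j$ in $T$.

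Summing this identity over $\ell'\ge\ell$ and re-expressing the uniform-tuple expectation as an expectation with respect to a size-biased random permutation $\pi=(\pi_k)_{k\ge1}$ of $(d_1,\ldots,d_n)$, with partial sums $S_k=\pi_1+\cdots+\pi_k$, the tail $\p{|V|\ge\ell}$ becomes a product of ratios of the form $(|\dstat|_1-S_k)/(|\dstat|_1-S_{k-1})$. When $\dstat(1)=0$, each $\pi_k\ge 2$ (size-biased sampling never returns $0$ or $1$), so $S_{k-1}\ge 2(k-1)$; applying $\log(1-x)\le -x$ to each telescoping factor and summing the resulting arithmetic-like series produces the bound $\exp(-\ell^2/(2|\dstat|_1))$, which is the second claim.

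For the first claim, I invoke the Camarri--Pitman Poissonization: attach to each label $j$ an independent exponential clock of rate $d_j$, so that leaves never arrive and the arrivals come in size-biased order. The partial sums $(S_k)$ then become sums of independent random variables, and Bernstein's inequality applies with variance proxy $|\dstat|_2^2-\dstat(1)$: the subtraction of $\dstat(1)$ arises because degree-$1$ clocks add $+1$ deterministically to $S_k$ (zero conditional variance) while still consuming one step of path length, so their contribution cancels in the Bernstein variance. Matching the Bernstein exponent against the linear- and quadratic-in-deviation regimes produces the two exponential terms in the statement, with constants $\beta^{1/3}/3$ and $\beta^{2/3}/24$; the threshold $\beta>17^{3/2}$ is what makes Bernstein's inequality apply in the claimed form.

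The main technical difficulty will be the careful Poissonization together with the bookkeeping for degree-$1$ contributions, which inflate $|V|$ step-by-step without contributing to the variance: this asymmetry is precisely what produces the $-\dstat(1)$ correction. Once the reduction to concentration on independent partial sums is in place, the rest of the argument is a standard Bernstein calculation.
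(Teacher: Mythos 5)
Your overall architecture — an exact spine/forest counting identity, the equivalence with a size-biased ordering of the degrees, then Poissonization — is the same as the paper's, and your exact formula for $\p{|V|=\ell}$ is consistent with the paper's Corollary~\ref{cor:dseq_formula}. However, both concentration steps as you describe them have genuine gaps. For the second bound: the conditional probability that the spine survives step $i$ is $\tfrac{|\dstat|_1-S_{i-1}}{n+1-i}$ (random numerator, \emph{deterministic} denominator $n+1-i$), not $\tfrac{|\dstat|_1-S_i}{|\dstat|_1-S_{i-1}}$. The product you wrote telescopes to $\tfrac{|\dstat|_1-S_\ell}{|\dstat|_1}$, and applying $\log(1-x)\le -x$ to its factors yields only an exponent linear in $\ell$ (each factor is $1-\pi_k/(|\dstat|_1-S_{k-1})\ge 1-O(1/\ell)$ in the relevant range), so it cannot produce the Gaussian tail $e^{-\ell^2/(2|\dstat|_1)}$. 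The quadratic exponent comes from the correct factors $1-\tfrac{S_{i-1}-(i-2)}{n+1-i}\le 1-\tfrac{i}{n+1-i}$ when every nonzero degree is at least $2$; you need that identity, not the telescoping one.

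For the first bound, two problems. First, after attaching exponential clocks the arrival \emph{order} is size-biased, but the partial sums $S_k$ of the first $k$ arrivals are still not sums of independent variables; what is independent is the family $(d_j\I{E_j\le t})_j$ at a \emph{fixed time} $t$, so you need a decoupling of ``number of arrivals by time $t$'' from ``degree mass accumulated by time $t$'' (this is exactly the role of \eqref{eq:generic_poissonization_bound} in the paper). Second, and more seriously, a lower-tail concentration bound on $S_k$ does not by itself bound $\p{|V|>h}$: accumulating degree mass only makes the per-step success probability large, and you still need a geometric/second-arrival argument on top. In the regime where a single degree $d_{\max}\gtrsim(|\dstat|_2^2-\dstat(1))^{1/2}$ carries essentially all of the variance proxy, any Bernstein-type lower-tail bound on $S_k$ degenerates (the sum is dominated by one indicator), and this is precisely the regime responsible for the stretched-exponential exponents $\beta^{1/3},\beta^{2/3}$: the paper handles it by a separate case analysis (two of the first $K$ uniforms must land in the single large interval, a binomial computation), which your plan does not contain and which a unified Bernstein argument will not recover.
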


A related bound, recently proved by Marzouk \cite[Proposition 5]{marzouk}, strengthens the first of the two bounds stated in the preceding theorem, up to constant factors.
These bounds, interesting in their own right, also have several consequences for the family trees of branching processes, which are summarized in our other main theorems,  below. In order to state the theorems, we need a little more terminology.

Given a tree $t$, write $t^{\le k}$ for the subtree of $t$ consisting of all nodes $u \in \vset(t)$ with $|u| \le k$. 

Let $\mu$ be a probability distribution with support $\N$ (by which we mean that $\mu(\N)=1$). By a {\em Bienaym\'e tree with offspring distribution $\mu$}, we mean the family tree $T$ of a branching process with offspring distribution $\mu$.\footnote{We propose this name as an alternative to the term ``Galton--Watson tree'' since (a) Bienaym\'e's introduction of such trees both predates that of Galton and Watson and is more mathematically correct - see \cite{bienayme} and \cite[Pages 83-86]{cournot}; and (b) we prefer not to honour the founder of eugenics, Francis Galton, by continuing to attach his name to these mathematical objects. (Galton wrote: ``We greatly want a brief word to express
the science of improving stock, which is by no means confined to questions of judicious
mating, but which, especially in the case of man, takes cognisance of all influences that
tend in however remote a degree to give to the more suitable races or strains of blood a
better chance of prevailing speedily over the less suitable than they otherwise would have
had. The word eugenics would sufficiently express the idea'' \cite[Page 25]{galton83}.)} 
The law of $T$ is uniquely determined by the property that for any plane tree $t$ of height at most $k$, 
\[
\p{T^{\le k}=t} = \prod_{v \in t^{\le k-1}} \mu(d_t(v))\, .
\]
In the preceding formula and below, we write $\mu(k) = \mu(\{k\})$ for readability. 

For $n \in \N$, if $\p{|\vset(T)|=n}>0$ then we define a Bienaym\'e tree conditioned to have size $n$ in the natural way: this is a random tree $T_n$ such that for any plane tree $t$ with $n$ vertices,
\[
\p{T_n=t} = \probC{T=t}{|\vset(T)|=n}\, .
\]

Finally, for a measure $\mu$ on $\R$, for $p > 0$ we write $|\mu|_p := (\int_\R |x|^p\mu(\mathrm{d}x))^{1/p}$. This agrees with the above notation $|\dstat|_p$ for sequences $\dstat=(\dstat(c),c \ge 0)$, by interpreting the sequence as the discrete measure assigning mass $\dstat(c)$ to each non-negative integer $c$. 
\begin{thm}\label{thm:main2}
Fix a probability distribution $\mu$ with support $\N$, with $|\mu|_1 \le 1$ and $|\mu|_2 = \infty$. For $n \in \N$, let $T_n$ be a Bienaym\'e tree with offspring distribution $\mu$, conditioned to have size $n$, and let $V_n$ be a uniformly random node of $T_n$. Then $\wid(T_n)/n^{1/2} \to \infty$, $|V_n|/n^{1/2} \to 0$, and $\hyt(T_n)/(n^{1/2}\log^3 n) \to 0$. All convergence results hold both in probability and in expectation, as $n \to \infty$. 
\end{thm}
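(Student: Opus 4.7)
The plan is to apply Theorem~\ref{thm:main} conditionally on the degree statistics $\dstat_{T_n}$. Note that conditional on $\dstat_{T_n}=\dstat$, the tree $T_n$ is uniformly distributed over $\tset_\dstat$, because the Bienaym\'e weight $\prod_{v\in \vset(t)}\mu(\deg_t(v))$ equals $\prod_{k\ge 0}\mu(k)^{\dstat(k)}$ and so depends only on $\dstat$. Consequently $(T_n,V_n)$ is conditionally uniform on $\tset_{\dstat_{T_n}}^\bullet$ given $\dstat_{T_n}$, and writing $r_n:=|\dstat_{T_n}|_1/(|\dstat_{T_n}|_2^2-\dstat_{T_n}(1))^{1/2}$, Theorem~\ref{thm:main} yields, for every $\beta>17^{3/2}$,
\[
\p{|V_n|>\beta r_n \mid \dstat_{T_n}} \;\le\; \exp\pran{-\frac{\beta^{1/3}}{3}\,r_n} + 2\exp\pran{-\frac{\beta^{2/3}}{24}}.
\]

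The principal obstacle is the claim $|\dstat_{T_n}|_2^2/n\to\infty$ in probability, equivalently $r_n=o_{\mathbf P}(n^{1/2})$. In the critical case $|\mu|_1=1$ this follows from the classical fact $\dstat_{T_n}(k)/n\to\mu(k)$ in probability for each fixed $k$ (a consequence of the local limit theorem for the {\L}ukasiewicz walk), combined with truncation: for any $M$,
$|\dstat_{T_n}|_2^2/n \ge \sum_{k\le M}k^2\dstat_{T_n}(k)/n \to \sum_{k\le M}k^2\mu(k)$,
and the right-hand side tends to $|\mu|_2^2=\infty$ as $M\to\infty$. In the strictly subcritical case $|\mu|_1<1$, the condensation phenomenon (Jonsson--Stef\'ansson, Janson) forces a single vertex of degree $(1-|\mu|_1)n+o_{\mathbf P}(n)$ in $T_n$, whence $|\dstat_{T_n}|_2^2 \gtrsim n^2$ trivially. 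The same results also yield the quantitative rates needed below for the expectation statements.

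Granted $r_n=o_{\mathbf P}(n^{1/2})$, convergence $|V_n|/n^{1/2}\to 0$ in probability is immediate from the displayed bound by taking $\beta$ a large constant. For convergence in expectation, integrating the tail conditional on $\dstat_{T_n}$ and using the deterministic bound $r_n\ge 1$ (which follows from $|\dstat_{T_n}|_2\le |\dstat_{T_n}|_1$) gives $\e[|V_n|\mid \dstat_{T_n}]\le Cr_n$ for an absolute constant $C$, so $\e|V_n|/n^{1/2}\le C\e r_n/n^{1/2}\to 0$. For the width lower bound, set $A_n:=\e[|V_n|\mid T_n]$. Markov's inequality conditional on $T_n$ gives $\p{|V_n|\le 2A_n\mid T_n}\ge 1/2$, and rewriting the left-hand side as $n^{-1}\sum_{k\le 2A_n}\wid(T_n,k)\le (2A_n+1)\wid(T_n)/n$ yields $\wid(T_n)\ge n/(4A_n+2)$. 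Since $\e A_n=\e|V_n|=o(n^{1/2})$, Jensen's inequality applied to the convex function $x\mapsto 1/(4x+2)$ gives $\e\wid(T_n)/n^{1/2}\ge n^{1/2}/(4\e|V_n|+2)\to\infty$, and convergence in probability is analogous.

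For the height bound, the conditional identity $\p{|V_n|>h\mid T_n}=n^{-1}|\{v\in\vset(T_n):|v|>h\}|$ and Markov's inequality give $\p{\hyt(T_n)>\beta r_n}\le n\,\p{|V_n|>\beta r_n}$. Taking $\beta_n=C(\log n)^{3/2}$ with $C$ large enough (specifically $C^{2/3}/24\ge 2$), the term $2\exp(-\beta_n^{2/3}/24)$ is $O(n^{-2})$, and the other term is likewise $O(n^{-2})$ on the event $\{r_n\ge 1\}$ (which has full probability). Hence $\hyt(T_n)\le \beta_n r_n = o_{\mathbf P}(n^{1/2}(\log n)^{3/2})$, and a fortiori $\hyt(T_n)/(n^{1/2}\log^3 n)\to 0$ in probability. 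Convergence in expectation follows from the crude bound $\hyt(T_n)\le n$ combined with the $O(n^{-1})$ failure probability:
$\e\hyt(T_n) \le \beta_n\e r_n + n\cdot O(n^{-1}) = o(n^{1/2}(\log n)^{3/2})$,
which is $o(n^{1/2}\log^3 n)$.
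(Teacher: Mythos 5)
Your overall strategy is the same as the paper's: condition on the degree statistics, apply Theorem~\ref{thm:main}, establish $|\dstat_{T_n}|_2^2/|\dstat_{T_n}|_1\to\infty$ via Janson-type concentration of $\dstat_{T_n}(k)/n$, and then deduce the three conclusions by a union bound (height) and a counting/Markov argument (width). However, your height step contains a genuine error. With $\beta_n=C(\log n)^{3/2}$, Theorem~\ref{thm:main} gives
\[
\p{|V_n|>\beta_n r_n\mid \dstat_{T_n}}\le \exp\pran{-\tfrac{1}{3}C^{1/3}(\log n)^{1/2}\,r_n}+2\exp\pran{-\tfrac{1}{24}C^{2/3}\log n}\,.
\]
The second term is indeed $O(n^{-2})$ for $C$ large, but the first term is only $\exp(-\Theta((\log n)^{1/2}))$ on the worst-case event $r_n=\Theta(1)$ (and $r_n\ge 1$ is all you can guarantee); this decays more slowly than any power of $n$, so after multiplying by $n$ in the bound $\p{\hyt(T_n)>h}\le n\,\p{|V_n|>h}$ the right-hand side diverges. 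To make the \emph{first} term polynomially small you need $\beta_n^{1/3}\gtrsim\log n$, i.e.\ $\beta_n=\Theta(\log^3 n)$ — this is precisely the source of the $\log^3 n$ factor in the statement (the paper takes $\beta=(6\log n)^3$). Your intermediate claim $\hyt(T_n)=o_{\mathbf P}(n^{1/2}(\log n)^{3/2})$ is therefore not established by this argument.

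Two secondary soft spots. First, the bound $\e|V_n|\le C\,\e r_n$ is not usable as written: on the event that $T_n$ has no branch point (a path), $|\dstat_{T_n}|_2^2-\dstat_{T_n}(1)=0$ and $r_n=\infty$, and more generally $r_n$ can be of order $n$; so $\e r_n$ need not be $o(n^{1/2})$. One must truncate, using $\e[|V_n|\mid\dstat_{T_n}]\le n$ on the bad event together with the fact that the bad event has \emph{exponentially} small probability — this is why the paper needs the $\oexp(1)$ version of the degree-statistics convergence (Proposition~\ref{prop:dstats_infinite_variance}), not merely convergence in probability. Second, in the subcritical case your appeal to a \emph{single} condensation vertex of degree $(1-|\mu|_1)n+o_{\mathbf P}(n)$ is stronger than what holds in general (Janson shows the condensation mass may split without further regularity assumptions); fortunately it is also unnecessary, since $|\mu|_2=\infty$ forces $\rho=1$ and $\pi=\mu$, so the same truncation argument as in the critical case, or the weaker ``missing mass on degrees $>K$'' argument, already yields $|\dstat_{T_n}|_2^2\ge C|\dstat_{T_n}|_1$ with very high probability.
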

\begin{thm}\label{thm:main3}
Fix a probability distribution $\mu$ with support $\N$, with $|\mu|_1 <1$ and with $\sum_{c \ge 0} e^{tc}\mu(c)=\infty$ for all $t$. For $n \in \N$ let $T_n$ be a Bienaym\'e tree with offspring distribution $\mu$, conditioned to have size $n$, and let $V_n$ be a uniformly random node of $T_n$. Then $\wid(T_n)/n^{1/2} \to \infty$, $|V_n|/n^{1/2} \to 0$, and $\hyt(T_n)/(n^{1/2}\log^3 n) \to 0$. All convergence results hold both in probability and in expectation, as $n \to \infty$. 
\end{thm}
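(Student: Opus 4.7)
The plan is to follow the same overall strategy as in the proof of Theorem~\ref{thm:main2}: condition on the degree statistics $\dstat_n := \dstat_{T_n}$ of $T_n$, note that conditionally on $\dstat_n$ the tree $T_n$ is uniformly distributed on $\tset_{\dstat_n}$ (a standard consequence of the product form $\prod_v \mu(d_t(v))$), and apply Theorem~\ref{thm:main} to the conditional law. The bound there is driven by the ratio $|\dstat_n|_1/(|\dstat_n|_2^2 - \dstat_n(1))^{1/2}$, and since $|\dstat_n|_1 = n-1$ is deterministic, the entire problem reduces to producing a suitable lower bound on $|\dstat_n|_2$.

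The key input, which distinguishes this theorem from Theorem~\ref{thm:main2}, is the \emph{condensation} phenomenon for subcritical simply generated trees whose offspring law has no exponential moments: by the results surveyed in \cite{MR2908619} (originally due to Jonsson--Stef\'ansson in the regularly varying case), with high probability $T_n$ contains a single vertex $v^\ast$ with $d_{T_n}(v^\ast) = (1-|\mu|_1)n + o(n)$. In particular, on a high-probability event we have $|\dstat_n|_2 \ge d_{T_n}(v^\ast) \ge (1-|\mu|_1)n/2$, so $|\dstat_n|_1/|\dstat_n|_2$ is bounded above by the absolute constant $2/(1-|\mu|_1)$ --- a far stronger estimate than is available in the infinite-variance critical setting of Theorem~\ref{thm:main2}.

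Given this $O(1)$ bound on the ratio, the three claims follow readily from Theorem~\ref{thm:main} applied conditionally on $\dstat_n$. For the uniform vertex height, taking $\beta = \epsilon n^{1/2}$ (for arbitrary fixed $\epsilon > 0$) yields $\p{|V_n| > \epsilon n^{1/2}} \to 0$ at stretched-exponential speed. For $\hyt(T_n)$, a Markov/union bound over the $n$ vertices of $T_n$ gives $\p{\hyt(T_n) > h \mid \dstat_n} \le n \cdot \p{|V_n| > h \mid \dstat_n}$; choosing $\beta = C\log^3 n$ for a sufficiently large constant $C$ drives the right-hand tail in Theorem~\ref{thm:main} below $n^{-2}$, so $\hyt(T_n) = O(\log^3 n) = o(n^{1/2}\log^3 n)$ with high probability. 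The width bound is then immediate from the pigeonhole inequality $\wid(T_n) \ge n/(\hyt(T_n) + 1)$, which forces $\wid(T_n)/n^{1/2} \to \infty$.

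For convergence in expectation, I would combine these high-probability bounds with the deterministic estimates $|V_n| \le \hyt(T_n) \le n-1$ and $\wid(T_n) \le n$; this reduces the task to showing that the condensation event has failure probability $o(n^{-1/2})$ (for the $|V_n|$ statement) and correspondingly small for $\hyt$ and $\wid$. I expect this quantitative strengthening of the condensation theorem to be the main technical obstacle: while the qualitative statement $d_{T_n}(v^\ast)/n \to 1-|\mu|_1$ is classical, pinning down polynomially small failure probabilities requires a local limit theorem for the random walk encoding of $T_n$ with careful attention to the heavy-tailed step distribution. An alternative route would be a direct moment computation for the total path length $\sum_v |v|$ via the cyclic bijection between conditioned Bienaym\'e trees and lattice excursions, which avoids uniform-integrability issues at the cost of more bookkeeping.
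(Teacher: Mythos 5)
Your overall skeleton (condition on $\dstat_{T_n}$, use that $T_n$ is then uniform on $\tset_{\dstat_{T_n}}$, and feed a lower bound on $|\dstat_{T_n}|_2$ into Theorem~\ref{thm:main}) is exactly the paper's, but the step you lean on to produce that lower bound is a genuine gap. You invoke single-vertex condensation --- a vertex $v^\ast$ with $\deg_{T_n}(v^\ast)=(1-|\mu|_1)n+o(n)$ with high probability --- under only the hypotheses $|\mu|_1<1$ and $\sum_c e^{tc}\mu(c)=\infty$ for all $t>0$. That statement is not available at this level of generality: the Jonsson--Stef\'ansson condensation theorem requires additional tail regularity (regular variation, or at least subexponentiality), and Janson's survey \cite{MR2908619} contains explicit examples of subcritical weight sequences for which the maximum degree divided by $n$ does \emph{not} converge to $1-\nu$. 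The paper deliberately avoids this: Proposition~\ref{prop:dstats_zero_radius} only asserts $|\dstat_{T_n}|_2^2\ge C|\dstat_{T_n}|_1=Cn$ for every fixed $C$ (with very high probability), and this is deduced from the much softer fact (Theorem~\ref{thm:dstats_simply_generated}, i.e.\ Janson's Theorem~11.4) that $\dstat_{T_n}(k)/n\to\pi(k)$ with exponentially small failure probability, where $\pi$ has mean $\nu=|\mu|_1<1$; since a $(1-\nu)$-fraction of the edge mass must then sit on vertices of degree $>K$ for every fixed $K$, one gets $|\dstat_{T_n}|_2^2\ge K(1-\nu)n$. This also disposes of the quantitative issue you flag at the end: no local limit theorem is needed, because the $\oexp(1)$ error bounds are already in Janson's proof.

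The weaker, correct bound $|\dstat_{T_n}|_2^2\ge Cn$ still suffices for the theorem, but it breaks your width argument. With it, Theorem~\ref{thm:main} only gives $\hyt(T_n)=O(n^{1/2}\log^3 n/C^{1/2})$ with high probability, so the pigeonhole inequality $\wid(T_n)\ge n/(\hyt(T_n)+1)$ yields only $\wid(T_n)\gtrsim C^{1/2}n^{1/2}/\log^3 n$, which does not tend to infinity after dividing by $n^{1/2}$. The paper instead derives the width bound from the \emph{typical-vertex} estimate: $\e|V_n|=o(n^{1/2})$ plus Markov's inequality shows that for each $\eps>0$, with probability at least $1-2\eps$ more than $n/2$ vertices lie in the first $\eps n^{1/2}$ levels, whence $\wid(T_n)\ge n^{1/2}/(2\eps)$. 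You should replace your pigeonhole step with this argument (or prove the condensation input under stronger hypotheses than the theorem allows, which would change the statement). Your treatment of $|V_n|$ and $\hyt(T_n)$, and the passage to convergence in expectation using the deterministic bounds $|V_n|\le\hyt(T_n)\le n-1$ together with the $\oexp(1)$ failure probability of the good degree event, is otherwise in line with the paper.
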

The results of Theorems~\ref{thm:main2} and~\ref{thm:main3} are close analogues of Conjectures 21.5 and 21.6 and Problems 21.7 and 21.8 from~\cite{MR2908619}, but those conjectures are stated for the slightly more general model of {\em simply generated trees}. In Section~\ref{sec:simply_generated} we define simply generated trees, state the aforementioned conjectures and problems precisely, and explain how to use Theorem~\ref{thm:main} to prove Conjecture~21.6 and solve Problem~21.8 from \cite{MR2908619}, and to nearly prove Conjecture 21.5 and nearly solve Problem~21.7 from the same paper.\footnote{``Nearly prove'' and ``nearly solve'' rather than ``prove'' and ``solve'' due to the presence of a $\log^{3} n$ factor in two of our bounds.} The key fact about simply generated trees is that, like conditioned Bienaym\'e trees, they are uniformly random conditional on their degree statistics, which allows us to apply Theorem~\ref{thm:main} to them. 

We also prove height and width bounds for conditioned Bienaym\'e trees which hold without any assumptions on the offspring distribution at all, aside from the requirement that the resulting family trees have both leaves and branch points. \begin{thm}\label{thm:main4}
There exists a constant $C > 0$ such that the following holds. Fix a probability distribution $\mu$ with support $\N$ with $\mu(0)+\mu(1)<1$. For $n \in \N$, let $T_n$ be a Bienaym\'e tree with offspring distribution $\mu$, conditioned to have size $n$, and let $V_n$ be a uniformly random node of $T_n$. 
Then $\e |V_n| \le Cn^{1/2}/(1-\mu(0)-\mu(1))^{1/2}$, 
\[
\E{
\wid(T_n)} \ge \frac{(1-\mu(0)-\mu(1))^{1/2}}{C} n^{1/2} \quad \mbox{ and }\quad
\E{\hyt(T_n)}\le C\frac{n^{1/2} \log^{3} n}{(1-\mu(0)-\mu(1))^{1/2}}. 
\]
\end{thm}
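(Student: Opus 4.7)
The plan is to apply Theorem~\ref{thm:main} conditionally on the degree statistics $\dstat_{T_n}$ of $T_n$, combined with a concentration estimate that relates the right-hand side of that theorem to $q := 1 - \mu(0) - \mu(1)$. Since $T_n$ conditional on $\dstat_{T_n}$ is uniform in $\tset_{\dstat_{T_n}}$ and $V_n$ is independent and uniform in $\vset(T_n)$, the pair $(T_n, V_n)$ is uniform in $\tset_{\dstat_{T_n}}^{\bullet}$, so Theorem~\ref{thm:main} applies directly. Any tree of size $n$ satisfies $|\dstat_{T_n}|_1 = n - 1$, and $|\dstat_{T_n}|_2^2 - \dstat_{T_n}(1) = \sum_{c \ge 2} c^2 \dstat_{T_n}(c)$, so the whole argument reduces to establishing
\[
\sum_{c \ge 2} c^2 \dstat_{T_n}(c) \;\ge\; c_0 \, q \, n
\]
with high probability, for an absolute constant $c_0 > 0$. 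Given this, $M(T_n) := (n-1)/\sqrt{|\dstat_{T_n}|_2^2 - \dstat_{T_n}(1)} \le C\sqrt{n/q}$ on the event, and integrating the conditional tail bound of Theorem~\ref{thm:main} over this event yields $\E{|V_n|} \le C n^{1/2}/q^{1/2}$.

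The concentration bound above is the main obstacle. The standard route uses the classical identification of a conditioned Bienaym\'e tree with a simply generated tree of weight sequence $\mu$, combined with the invariance of this conditional law under the exponential tilt $\mu \mapsto \tilde\mu_\lambda$ defined by $\tilde\mu_\lambda(k) \propto \mu(k) \lambda^k$, valid for any $\lambda$ inside the domain of convergence of the generating function of $\mu$. Choosing $\lambda$ so that $\tilde\mu_\lambda$ is as close to critical as is achievable, the degree statistics of $T_n$ concentrate around $n \tilde\mu_\lambda(\cdot)$ via standard large-deviations arguments for simply generated trees. It then remains to prove the lower bound $\sum_{c \ge 2} c^2 \tilde\mu_\lambda(c) \ge c_0 q$ uniformly in $\mu$ and in the tilt; this can be established via a short direct computation using the criticality constraint on $\tilde\mu_\lambda$ (the fact that $\tilde\mu_\lambda$ has mean $1$ forces enough second-moment contribution from $\{c \ge 2\}$, even when the tilted mass on $\{c \ge 2\}$ is much smaller than~$q$).

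The remaining two inequalities follow from the bound on $\E{|V_n|}$ with short deductions. For the width, set $h := \lceil 2\E{|V_n|}\rceil$; Markov's inequality gives $\p{|V_n| \le h} \ge 1/2$, and since $\p{|V_n| \le h} = \E{|\{v \in \vset(T_n) : |v| \le h\}|}/n$ the expected number of vertices at heights $\le h$ is at least $n/2$. These vertices occupy at most $h+1$ levels, each of width bounded by $\wid(T_n)$, whence $(h+1)\E{\wid(T_n)} \ge n/2$ and $\E{\wid(T_n)} \ge q^{1/2} n^{1/2}/C'$ (small $n$ absorbed into the constant). For the height, the union bound $\p{\hyt(T_n) > h} \le \E{|\{v \in \vset(T_n) : |v| > h\}|} = n \p{|V_n| > h}$ combined with Theorem~\ref{thm:main} applied conditionally on the high-probability event $\{M(T_n) \le C\sqrt{n/q}\}$, with the choice $h = A n^{1/2} \log^3 n / q^{1/2}$ for $A$ a sufficiently large absolute constant, makes $\p{|V_n| > h}$ at most $n^{-2}$ (the $\log^3 n$ exponent comfortably dominates both the $\beta^{1/3}$ and $\beta^{2/3}$ rates in Theorem~\ref{thm:main}). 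Combined with the trivial bound $\hyt(T_n) \le n-1$, this gives $\E{\hyt(T_n)} = O(n^{1/2} \log^3 n / q^{1/2})$, as required.
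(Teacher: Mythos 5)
Your overall architecture is the paper's: reduce everything to a very-high-probability lower bound on $\sum_{c\ge 2}c^2\dstat_{T_n}(c)$ of order $qn$, then integrate the conditional tail bound of Theorem~\ref{thm:main}; your deductions of the width and height bounds from the bound on $\e|V_n|$ are also essentially those in the paper and are fine. The gap is the lemma you assert about the tilted distribution. The inequality $\sum_{c\ge2}c^2\tilde\mu_\lambda(c)\ge c_0q$ for the (near-)critical tilt is false, and no ``short direct computation from criticality'' can rescue it: the mean-one constraint gives $\sum_{c\ge2}(c-1)\tilde\mu_\lambda(c)=\tilde\mu_\lambda(0)$, hence $\sum_{c\ge2}c^2\tilde\mu_\lambda(c)\ge 2\tilde\mu_\lambda(0)$ and $\ge 4(1-\tilde\mu_\lambda(0)-\tilde\mu_\lambda(1))$, but these involve only \emph{tilted} quantities, which need not be comparable to $q=1-\mu(0)-\mu(1)$. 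Concretely, take $\mu(0)=B^4$, $\mu(1)=B$, $\mu(2)=1-B-B^4$ with $B$ small. Then $q\approx 1$, but the critical tilt has $\lambda=(\mu(0)/\mu(2))^{1/2}\approx B^2$ and $\tilde\mu_\lambda\approx(B,\,1-2B,\,B)$, so $\sum_{c\ge2}c^2\tilde\mu_\lambda(c)\approx 4B\ll q$.

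Two further points. First, this is not merely a hole in your write-up: for that $\mu$ the conclusion of the theorem itself fails, because the conditioned tree is invariant under exponential tilting while $1-\mu(0)-\mu(1)$ is not. The tree above coincides in law with the conditioned tree for offspring distribution $\approx(B,1-2B,B)$, which (by the paper's own discussion at the end of Section~1.1) has $\E{\wid(T_n)}=\Theta((Bn)^{1/2})$ and $\e|V_n|=\Theta((n/B)^{1/2})$, contradicting the claimed $\ge n^{1/2}/C$ and $\le Cn^{1/2}$ once $B$ is small. So the statement, and Proposition~\ref{prop:dstats_finite_variance}, implicitly require something like $|\mu|_1\le 1$; note that the paper's own proof of Proposition~\ref{prop:dstats_finite_variance} quietly asserts that $\dstat_{T_n}(k)/n$ concentrates around $\mu(k)$ itself, which by Theorem~\ref{thm:dstats_simply_generated} holds only when $\mu$ already equals the canonical distribution $\pi$ --- your instinct to pass to the tilt is the more careful one, and it exposes the problem. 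Second, under $|\mu|_1\le1$ your approach does go through, but the correct mechanism is not criticality of the tilt: it is that the canonical tilt parameter is then $\ge 1$ (one tilts \emph{up} toward criticality, or to the radius of convergence), so $\tilde\mu_\lambda$ dominates $\mu$ in the likelihood-ratio and hence stochastic order, giving $1-\tilde\mu_\lambda(0)-\tilde\mu_\lambda(1)\ge q$ and therefore $\sum_{c\ge2}c^2\tilde\mu_\lambda(c)\ge 4q$. You should also make explicit that ``with high probability'' must mean failure probability $o(n^{-1/2})$ (the large-deviation bounds do supply $\oexp(1)$), so that the bad event contributes $o(n^{1/2})$ to $\e|V_n|$ via the trivial bound $|V_n|\le n$.
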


\subsection{Discussion}
There is a substantial amount of past work on the heights and widths of random Bienaym\'e trees and random combinatorial trees \cite{MR2956056,MR3077536,MR3916103,MR3651047,MR3335012}, and bounds on these quantities, particularly the height, often feature in scaling limit theorems for random trees and associated objects \cite{ss21,MR4132643,MR3947331,MR3551197,MR3188597}. The works \cite{MR2956056,MR3077536,MR3916103} all bound the height via the study of the {\em depth-first exploration process} of the tree. This technique gives bounds which are frequently tight, up to constant factors, for trees whose offspring distributions are sufficiently light tailed (e.g. with finite variance). However, it does not appear well-suited to studying trees with heavy-tailed degrees (in which case the depth-first queue length is a poor proxy for the height).

For critical conditioned Bienaym\'e trees with finite variance ($|\mu|_1=1$, $|\mu|_2<\infty$), sub-Gaussian tail bounds for $n^{-1/2}\hyt(T_n)$ and $n^{-1/2}\wid(T_n)$ are known \cite{MR3077536}. However, the authors of that paper state that they ``are not aware of any results [for the height and width] that hold for arbitrary offspring distributions.'' As far as the authors of the current paper are aware, this is still the case, and this paper is the first work to provide such results. 

We do not expect that the stretched exponential tail bound of Theorem~\ref{thm:main} is tight. However, it is not completely clear what form an optimal bound ought to take. We now record some observations which limit how quickly the optimal bounds can decay, to help provide a sense of the potential complexities. These observations in particular show that the exponents $1/3$ and $2/3$ in Theorem~\ref{thm:main} can not be replaced by any values strictly greater than $1$, which means that one can not hope for sub-Gaussian tail bounds like those we prove for degree statistics with $\dstat(1)=0$ to hold in general.  (The computations underlying the observations are not fully spelled out here but are not too complicated.)

First, fix $\alpha \in (1,2)$, and suppose that $|\mu|_1=1$ and $\mu(k,\infty) =(1+o(1))c k^{-\alpha}$ as $k \to \infty$, so $\mu$ is a critical offspring distribution in the domain of attraction of an $\alpha$-stable law.
In this setting, it is known \cite[Theorem 1.5]{MR3634265} that 
\begin{equation}\label{eq:alpha_stable_bd}
\p{\hyt(T_n) \ge c n^{1-1/\alpha}} \asymp c^{1+\alpha/2}\exp(- (\alpha-1)^{1/(\alpha-1)}c^{\alpha})
\end{equation}
as first $n\to\infty$, then $c \to \infty$. 
Such a tree $T_n$ will typically have $\Theta(n k^{-\alpha-1})$ nodes of degree $k$ for $k \le n^{1/\alpha}$ and no nodes of degree much larger than $n^{1/\alpha}$, and so will satisfy $|\dstat_{T_n}|_2^2 \asymp n^{2/\alpha}$. Since $\alpha$ can be taken arbitrarily close to $1$, comparing the upper bound from Theorem~\ref{thm:main} with (\ref{eq:alpha_stable_bd}) shows that the exponent $2/3$ in Theorem~\ref{thm:main} can not be replaced with anything strictly greater than one. 

Second, consider degree statistics of the form $\dstat=(k,k,\ldots,0,1,0,\ldots)$, corresponding to a tree with $n=2k+1$ nodes, with a single node of degree $k$, and $k$ nodes each of degrees $0$ and $1$. For such degree statistics, $|\dstat|_1/(|\dstat|_2^2-\dstat(1))^{1/2} = \Theta(1)$. 
Moreover, it is not hard to see that with high probability a random tree with these degree statistics has height $\Theta(\log n)$, so there is $\delta >0$ such that the probability that a randomly sampled node has height at least $\delta\log n$ is at least $(\delta \log n)/n$. Combining these observations shows that neither the exponent $1/3$ nor the exponent $2/3$ in the first bound in Theorem~\ref{thm:main} can in general be replaced with anything greater than $1$. (Marzouk's result \cite[Proposition 5]{marzouk} shows that one {\em can} essentially replace both constants $1/3$ and $2/3$ by $1$; the above observations show that this is then best possible.)

We conclude the discussion with a word about Theorem~\ref{thm:main4}. 
The dependence on $\mu(0)$ and $\mu(1)$ in  that theorem is necessary; if $\mu(0)+\mu(1)=1$ then with probability one $T_n$ is a path with $n$ vertices, which has width $1$ and height $n-1$. Moreover, the form of the dependence in the theorem is essentially optimal. To see this, suppose that $\mu(1)=1-\eps$ and $\mu(0)=\mu(2)=\eps/2$. 
Then with high probability $T_n$ will have 
$(1+o(1))(1-\eps)n$ vertices with exactly one child. Let $\hat{T}_n$ be the tree obtained from $T_n$ by suppressing all vertices with exactly one child, so that $\hat{T}_n$ has only nodes with $0$ or $2$ children, and $T_n$ can be recovered from $\hat{T}_n$ by subdividing edges. Then $\hat{T}_n$ has size $(1+o(1))\eps n$ with high probability, and is a uniform binary tree conditional on its size, so has height $\Theta((\eps n)^{1/2})$ and width $\Theta((\eps n)^{1/2})$ in probability. Each edge of $\hat{T}_n$ is subdivided $\Theta(\eps^{-1})$ times on average in $T_n$, from which it is easy to believe (and not too hard to prove) that $T_n$ has height $\Theta((n/\eps)^{1/2})=\Theta((n/(1-\mu(0)-\mu(1)))^{1/2})$ and width $\Theta((\eps n)^{1/2}) = \Theta(((1-\mu(0)-\mu(1))n)^{1/2})$ in probability and in expectation.

\subsection{Notation}
For a sequence $(r_n,n \ge 1)$ of real numbers, we write $r_n=\oexp(1)$ if there exists $c > 0$ such that $r_n \le e^{-cn}$ for all $n$ sufficiently large. 
Given a sequence of events $(E_n,n \ge 1)$, 
we say that $E_n$ occurs {\em with very low probability} (and that $E_n^c$ occurs {\em with very high probability}) if $\p{E_n}= \oexp(1)$.

\section{\bf An overview of the proofs.}

\subsection{\bf A sampler for the height of the marked node}
Fix degree statistics $\dstat$, and let $(T,V) \in_u \tset_{\dstat}^{\bullet}$. The tool which unlocks all the results of the paper is a sampling procedure which generates a random variable with the same law as $|V|$.  To describe the sampling procedure, some notation is needed. 

Given degree statistics $\dstat=(\dstat(c),c \ge 0)$, we say a random vector $D=(D_1,\ldots,D_n)$ is a {\em size-biasing of $\dstat$} if $n =\sum_{c \ge 0} \dstat(c) $ and for any sequence $\dseq=(d_1,\ldots,d_n)$ such that $|\{i \in [n]: d_i=c\}|=\dstat(c)$ for all $c$, the following holds. For each $1 \le k\le n$, 
\begin{align}\label{eq:size_biasing}
	\probC{D_k=d_k}{(D_1,\ldots,D_{k-1})=(d_1,\ldots,d_{k-1})} & = \frac{d_k(\dstat(d_k) - w((d_1,...,d_{k-1}),d_k))}{|\dstat|_1-d_1-\ldots-d_{k-1}}\, , 
\end{align}
where $w((d_1,...,d_{k-1}), d) = |\{i\in [k-1]:d_i = d\}|$ for $d\geq 0$. The fraction is interpreted as equal to $1$ if $d_1+\ldots+d_{k-1}=|\dstat|_1$. The definition implies that $|\{1 \le k \le n: D_k=c\}|=\dstat(c)$ almost surely, for all $c \ge 0$. It also implies that the final $\dstat(0)\ge 1$ entries of $D$ all equal $0$, and in particular $D_n=0$. 

\begin{prop}\label{prop:sampler}
Fix degree statistics $\dstat$ and write $n=|\dstat|_1+1$. Let $(T,V) \in_u \tset_{\dstat}^{\bullet}$, and let $D=(D_1,\ldots,D_n)$ be a size-biasing of $\dstat$. 
Next let $(U_1,\ldots,U_n)$ be independent Uniform$[0,1]$ random variables, independent of $D$. For $i \in [n]$ let 
\[
A_i = 
\begin{cases}
1	& \mbox{ if }U_i \le\frac{1+\sum_{j=1}^{i-1} (D_j-1)}{n+1-i}\\
0	& \mbox{ otherwise}. 
\end{cases}
\]
Finally, let $M = \min(i: A_i = 1)$. Then $|V|\eqdist M-1$. 
\end{prop}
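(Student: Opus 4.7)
The plan is to prove the stronger statement that the joint distribution of the pair $\bigl((d_{v_0}, \ldots, d_{v_{|V|-1}}), |V|\bigr)$, consisting of the degrees of the strict ancestors of $V$ together with $|V|$, coincides with that of $\bigl((D_1, \ldots, D_{M-1}), M-1\bigr)$ under the sampler. The claim $|V| \eqdist M-1$ then follows immediately by marginalization. The strategy is to compute both joint probabilities in closed form in terms of $\dstat$ and observe that they are literally identical.

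For the sampler, I note that the event $\{A_1 = \cdots = A_k = 0,\, A_{k+1} = 1\}$ depends on $D$ only through $D_1, \ldots, D_k$, since the threshold of $A_{k+1}$ equals $(\tau_k - k + 1)/(n - k)$ with $\tau_k := D_1 + \cdots + D_k$. Writing $\p{D_1 = d_1, \ldots, D_k = d_k}$ using \eqref{eq:size_biasing} and multiplying by the conditional $\p{A_1 = \cdots = A_k = 0,\, A_{k+1} = 1 \mid D_1, \ldots, D_k}$, the denominators $|\dstat|_1 - \tau_{j-1}$ from the size-biasing probabilities cancel telescopically with the identical numerators appearing in each $\p{A_i = 0 \mid D}$, leaving
\[
\p{(D_1,\ldots,D_k) = (d_1,\ldots,d_k),\, M = k+1} = \frac{(\tau_k - k + 1)(n-k-1)!}{n!}\prod_{j=1}^{k} d_j \bigl(\dstat(d_j) - w_j\bigr),
\]
where $w_j = |\{i<j: d_i = d_j\}|$.

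For the tree side, I count pairs $(T,V) \in \tset_\dstat^\bullet$ with $|V|=k$ and strict-ancestor degree sequence $(d_0, \ldots, d_{k-1})$ via a spine-and-forest decomposition. Such a pair is uniquely specified by: (i) choices of positions $p_j \in \{1,\ldots,d_j\}$ of the spine-continuation child at each of $v_0, \ldots, v_{k-1}$, contributing $\prod_{j=0}^{k-1} d_j$; and (ii) an ordered plane forest of $K+1 := \tau_k - k + 1$ trees on $n-k$ vertices with residual degree statistics $\dstat^{(k)}(c) := \dstat(c) - |\{j<k: d_j=c\}|$, where now $\tau_k := \sum_{j=0}^{k-1} d_j$. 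The constraint $|V|=k$ forces $V$ to be the root of the spine-continuation tree, so there is no extra marking freedom, and the slot that tree occupies in the ordered forest is uniquely determined by the $p_j$'s. By the standard cycle-lemma formula for ordered plane forests, the number of such forests is $\tfrac{K+1}{n-k}\binom{n-k}{\dstat^{(k)}(0), \dstat^{(k)}(1), \ldots}$. Dividing the total count by $|\tset_\dstat^\bullet| = n!/\prod_c \dstat(c)!$ and using the algebraic identity $\binom{n-k}{\dstat^{(k)}(0), \ldots}\prod_c \dstat(c)!/n! = \tfrac{(n-k)!}{n!}\prod_{j=0}^{k-1}(\dstat(d_j) - w_j)$, the tree-side probability is seen to match the sampler expression exactly (with the identification $(d_0, \ldots, d_{k-1}) \leftrightarrow (d_1, \ldots, d_k)$).

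The main obstacle I foresee is carefully justifying the spine-and-forest bijection, in particular that the spine-continuation tree (the one whose root is $V$) occupies a unique, combinatorially determined slot in the ordered forest given the choices $p_0, \ldots, p_{k-1}$, so that the factor $\prod_{j=0}^{k-1} d_j$ is not double-counted against the forest enumeration. Some care is also needed with the convention in \eqref{eq:size_biasing} when the size-biasing denominator $|\dstat|_1 - \tau_{j-1}$ reaches zero, but tuples triggering this case automatically contribute zero to both sides via the $d_j$ factor. Once the combinatorial decomposition is in place, the remainder reduces to routine factorial bookkeeping.
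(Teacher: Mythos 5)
Your proposal is correct and follows essentially the same route as the paper: a spine-plus-forest decomposition counted via the cycle-lemma formula for plane forests, matched against an explicit computation of the sampler's joint law in which the size-biasing denominators telescope against the $\p{A_i=0}$ factors. The only cosmetic difference is that you condition on $\{|V|=k\}$ (unmarked forest, with the compensating factor $(\tau_k-k+1)/(n-k)$) whereas the paper works with $\{|V|\ge k\}$ (marked forest with the mark in the tree rooted at $v^k$), and the two bookkeeping schemes are equivalent.
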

In the above proposition, note that since $D_n=0$, when $i=n$ we have 
\[
\frac{1+ \sum_{j=1}^{i-1} (D_j-1)}{n+1-i}
= 1+\sum_{j=1}^{n-1} (D_j-1) = 1,
\]
so $A_n=1$ and thus $M \le n$. 

Theorem~\ref{thm:main} is an essentially immediate consequence of Proposition~\ref{prop:sampler} together with the next result. 
\begin{thm}
\label{thm:main_mod}
Fix degree statistics $\dstat$, write $n=|\dstat|_1 + 1$, and let $D=(D_1,\ldots,D_n)$ be a size-biasing of $\dstat$. 
Next let $(U_1,\ldots,U_n)$ be independent Uniform$[0,1]$ random variables, independent of $D$. For $i \in [n-1]$ let 
\[
B_i = 
\begin{cases}
1	& \mbox{ if }U_i \le\frac{\sum_{j=1}^{i-1} (D_j-1)}{n-i}\\
0	& \mbox{ otherwise}. 
\end{cases}
\]
Finally, let $\sigma = \inf(i: B_i = 1)$. If 
$\dstat(1)>0$ then for all $\beta > 17^{3/2}$, 
\[
\p{\sigma > \beta \frac{|\dstat|_1}{(|\dstat|_2^2-\dstat(1))^{1/2}}}
\le
\exp\pran{-\frac{\beta^{1/3}}{3}\frac{|\dstat|_1}{(|\dstat|_2^2-\dstat(1))^{1/2}}} + 2\exp\pran{-\frac{\beta^{2/3}}{24}}\, ,
\]
and if $\dstat(1)=0$ then for all $\ell \ge 1$, 
\[
\p{\sigma \ge \ell} \le e^{-(\ell-1)^2/(2|\dstat|_1)}\, .
\]

\end{thm}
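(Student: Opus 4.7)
Both parts begin with conditioning on the size-biased sequence $D$ and exploiting conditional independence of $B_1,\ldots,B_{n-1}$ given $D$:
\[
\mathbb{P}(\sigma \ge \ell \mid D) = \prod_{i=1}^{\ell-1}\left(1 - \frac{S_{i-1}^+}{n-i}\right) \le \exp\left(-\sum_{i=1}^{\ell-1}\frac{S_{i-1}^+}{n-i}\right),
\]
where $S_i = \sum_{j \le i}(D_j - 1)$ and $x^+ = \max(x,0)$. The task then reduces to lower-bounding the exponent with high probability over $D$.

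The second part ($\dstat(1)=0$) admits a pointwise argument. In size-biased order a value-$0$ slot is never selected while positive mass remains, so all non-zero entries appear before any zero; and since $\dstat(1)=0$, every non-zero entry is at least $2$. Hence $S_i \ge i$ for each $i$ less than the number of non-leaves. Substituting $S_{i-1} \ge i-1$ and $n-i=|\dstat|_1+1-i$ into the exponential and evaluating the resulting sum via a standard harmonic/logarithmic estimate (using $-\log(1-x) \ge x + x^2/2$ to recover the missing linear term) gives the claimed Gaussian bound.

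The first part ($\dstat(1)>0$) is subtler: many early $D_j$'s can equal $1$, contributing $0$ to $S$. Set $Q^2=|\dstat|_2^2-\dstat(1)=\sum_{c\ge 2}c^2\dstat(c)$ and $R=|\dstat|_1/Q$, so the stated scale is $\beta R$. My plan is to pick an intermediate index $k\asymp \beta^{2/3}R$ and split on a ``good event'' $G$ that the degree-$\ge 2$ mass $\tilde\Sigma_k := \sum_{j\le k} D_j\mathbf{1}\{D_j\ge 2\}$ is at least a constant fraction of its mean, which is of order $kQ^2/|\dstat|_1$. Because $S_i \ge \tilde\Sigma_i/2$ whenever $D_1,\ldots,D_i$ contains no $0$'s, and $\tilde\Sigma$ is non-decreasing, on $G$ one has $\sum_{i=k+1}^{\lceil\beta R\rceil}S_{i-1}^+/(n-i)\gtrsim (\beta R - k)\tilde\Sigma_k/|\dstat|_1\gtrsim\beta^{1/3}R$, yielding the first exponential in the bound. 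On $G^c$, a Chernoff-type concentration inequality for $\tilde\Sigma_k$, whose mean is of order $\beta^{2/3}$, supplies the second term $2\exp(-\beta^{2/3}/24)$.

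The main obstacle is obtaining the concentration bound for $\tilde\Sigma_k$, since size-biased sampling without replacement has a complicated dependence structure. I would handle this via the Poissonization trick highlighted in the introduction: attach independent $\mathrm{Exp}(1)$ clocks $E_j$ to each slot and let $T_j=E_j/d_j$, so the size-biased order agrees with the order of the $T_j$'s. In this representation, the mass collected by continuous time $t$ is a sum of independent bounded random variables, directly accessible to Chernoff/Bernstein bounds, and $\tilde\Sigma_k$ is recovered by tuning $t$ so that $N(t)=|\{j:T_j\le t\}|$ concentrates near $k$. Boundary regimes (notably $R$ very large compared with $\beta^{5/3}$, where the first exponential in the target bound becomes essentially free) would be treated separately but by the same scheme, and the constants $1/3$, $1/24$, and $17^{3/2}$ would emerge from optimizing the choice of $k$ and the constant fraction defining $G$.
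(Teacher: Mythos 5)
Your reduction to lower-bounding $\sum_{i\le\ell-1}S_{i-1}^+/(n-i)$ conditionally on $D$ is correct, and your treatment of the $\dstat(1)=0$ case is essentially the paper's (the pointwise bound $S_{i-1}\ge i-1$ gives exponent $(\ell-1)(\ell-2)/(2|\dstat|_1)$ rather than $(\ell-1)^2/(2|\dstat|_1)$, and the $-\log(1-x)\ge x+x^2/2$ correction does not close that gap for $\ell\lesssim|\dstat|_1^{1/2}$; but this is at the level of an index shift already present in the statement, which is in fact violated at $\ell=2$ since $B_1=0$ a.s.). The genuine gap is in the first part, at the concentration step for $\tilde\Sigma_k$. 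Write $Q^2=|\dstat|_2^2-\dstat(1)$ and let $d_{\max}$ be the largest degree. Your claim that $\e\tilde\Sigma_k$ is of order $kQ^2/|\dstat|_1=\beta^{2/3}Q$ fails as soon as a single degree carries a non-negligible fraction of $Q^2$: the sampling is without replacement, so a node of degree $d$ contributes at most $d$ to $\tilde\Sigma_k$ (not $kd^2/|\dstat|_1$), and the mean saturates. Concretely, take $\dstat$ with one node of degree $d$, $\dstat(1)=a$ and $d$ leaves (the example in the paper's own discussion, where $R=(d+a)/d$ can be $\Theta(1)$). Then $\tilde\Sigma_k\le d=Q$ deterministically, whereas $kQ^2/|\dstat|_1=\beta^{2/3}Q$, so your good event $G$ is \emph{empty} and $\p{G^c}=1$; no Chernoff or Poissonized bound can then deliver $\p{G^c}\le 2\exp(-\beta^{2/3}/24)$. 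The exponential-clocks representation does not help here, because the lower tail of a sum dominated by one huge summand simply does not concentrate at the claimed rate.

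What is missing is a dichotomy on $d_{\max}$, not on $R$. The paper splits on whether $d_{\max}\le Q/\beta^{1/3}$: in the small-$d_{\max}$ case the Poissonized product $\prod_{i:d_i\ge2}(1+p_it)e^{-p_it}$ genuinely concentrates (its Lemma~\ref{lem:gtd_bd} and Corollary~\ref{cor:gtd_bd}), while in the complementary case it abandons the bulk entirely and runs a direct binomial second-moment argument over the \emph{full} window of length $\beta R$ (two of the first $\lfloor\beta R\rfloor$ uniforms landing in the sub-interval attached to the maximal degree); that second argument is precisely what produces the $2\exp(-\beta^{2/3}/24)$ term. Your ``boundary regime'' remark targets $R\gg\beta^{5/3}$, which is not where the difficulty lies. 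Two smaller points: on $G$ your exponent $(\beta R-k)\tilde\Sigma_k/|\dstat|_1$ evaluates to order $\beta^{5/3}$, not $\beta^{1/3}R$ (these agree only when $R\asymp\beta^{4/3}$); this happens to be harmless since $\exp(-c\beta^{5/3})$ is absorbed by the $2\exp(-\beta^{2/3}/24)$ term for $\beta>17^{3/2}$, but it means your good-event computation does not actually produce the first exponential in the target. The claim $S_i\ge\tilde\Sigma_i/2$ before any zero appears is correct and is a sound way to pass from collected degree mass to the drift.
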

Note that in Theorem~\ref{thm:main_mod}, we have $B_i=1$ iff $U_i \le \tfrac{\sum_{j=1}^{i-1} (D_j-1)}{n-i}$, whereas in Proposition~\ref{prop:sampler}, we have $A_i=1$ iff $U_i \le \tfrac{1+\sum_{j=1}^{i-1} (D_j-1)}{n+1-i}$. 
Since $\tfrac{a+1}{b+1} \ge \tfrac{a}{b}$ whenever $\tfrac{a}{b} < 1$, it follows that the random variable $\sigma=\inf(i: B_i=1)$ stochastically dominates the random variable $M=\min(i:A_i=1)$. Thus, upper tail bounds for $\sigma$ automatically apply to $M$. Since $|V|\eqdist M-1$, in view of this observation, the bounds of Theorem~\ref{thm:main} follow immediately from those of Theorem~\ref{thm:main_mod}.  

\subsection{\bf Moving from random marked trees to Bienaym\'e trees}

Once Theorem~\ref{thm:main} is established, the primary work in proving the other results of the paper is to understand the degree statistics of conditioned Bienaym\'e trees, under various assumptions on their offspring distributions. We prove the following bounds. 

\begin{prop}\label{prop:dstats_infinite_variance}
Fix a probability distribution $\mu$ with support $\N$ with $|\mu|_1 \le 1$ and $|\mu|_2 = \infty$. For $n \in \N$ let $T_n$ be a Bienaym\'e tree with offspring distribution $\mu$, conditioned to have size $n$, and let $\dstat_{T_n}$ be the degree statistics of $T_n$. Then for any $C > 0$, with very high probability $|\dstat_{T_n}|_2^2 \ge C |\dstat_{T_n}|_1$. 
\end{prop}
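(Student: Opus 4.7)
By the Otter--Dwass / cycle-lemma bijection between plane trees of size $n$ and positive walks with $\sum_{i=1}^n Y_i = n-1$, if $Y_1, Y_2, \ldots$ are i.i.d.\ with law $\mu$, then conditional on $E_n := \{\sum_{i=1}^n Y_i = n-1\}$, the multiset $\{Y_1, \ldots, Y_n\}$ is distributed as the degree multiset of $T_n$. In particular, $|\dstat_{T_n}|_2^2 \eqdist \sum_{i=1}^n Y_i^2$ on $E_n$. Setting $A_n^C := \{\sum_{i=1}^n Y_i^2 < C(n-1)\}$, it suffices to show $\p{A_n^C \mid E_n} = \oexp(1)$ for every fixed $C > 0$, which I would obtain via $\p{A_n^C \mid E_n} \le \p{A_n^C}/\p{E_n}$ by separately bounding the numerator exponentially and the denominator sub-exponentially.

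\textbf{Step 2 (Hoeffding upper bound on $\p{A_n^C}$).} Since $|\mu|_2 = \infty$, given $C > 0$ I pick $M = M(C)$ so that $\e_\mu[Y^2 \mathbf{1}\{Y \le M\}] > 3C$. The truncated sum $S_n := \sum_{i=1}^n Y_i^2 \mathbf{1}\{Y_i \le M\}$ is a sum of $n$ i.i.d.\ variables bounded by $M^2$ with mean exceeding $3Cn$, and $\sum Y_i^2 \ge S_n$. Hoeffding's inequality then yields $\p{A_n^C} \le \p{S_n < 2Cn} \le e^{-c_1 n}$ for some $c_1 = c_1(C,M) > 0$.

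\textbf{Step 3 (Sub-exponential lower bound on $\p{E_n}$, the main obstacle).} The hypothesis $|\mu|_2 = \infty$ implies that $\mu$ has no positive exponential moments, which should prevent $\p{E_n}$ from decaying exponentially. My quantitative plan is a ``big jump plus truncated bulk'' construction: fix a second cutoff $M'$ with $\mu((M', \infty)) < c_1/4$, put $\bar m_{M'} := \e_\mu[Y \mid Y \le M']$, and lower bound $\p{E_n}$ by the probability that one coordinate $Y_j$ equals a target $L_n \approx (1 - \bar m_{M'})(n-1)$ while the remaining $Y_i \in [0, M']$ and $\sum_{i \ne j} Y_i = n - 1 - L_n$. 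The truncated-bulk factor is at least $c n^{-1/2} \mu([0, M'])^{n-1} \ge \exp(-(c_1/4)n - O(\log n))$, by the classical local limit theorem for bounded-support lattice sums tuned at the mean of the truncated law. The single-jump factor $\mu(L_n)$ is the technical heart: a dyadic regrouping of $\sum_k k^2 \mu(k) = \infty$ shows that $\mu([2^j, 2^{j+1})) \ge 1/(j^2 4^j)$ along infinitely many scales $j$, so each such ``heavy'' scale contains values where $\mu$ is polynomially lower-bounded; replacing the single jump by $O(\log n)$ jumps at heavy scales whose values sum to the required excess costs at most a factor $n^{-O(\log n)}$, still sub-exponential. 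Combining yields $\p{E_n} \ge \exp(-(c_1/2)n)$ for $n$ large, and Step 2 then gives $\p{A_n^C \mid E_n} \le e^{-(c_1/2)n} = \oexp(1)$. The chief difficulty is organising the multi-jump construction when $\mu$'s heavy scales are themselves sparse (e.g.\ $\mu$ concentrated on powers of $2$), and verifying that any residual sum can be absorbed into the bulk without spoiling the local limit theorem.
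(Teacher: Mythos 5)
Your route is genuinely different from the paper's. The paper never works with the random-walk representation directly: it treats $T_n$ as a simply generated tree, invokes Janson's concentration theorem for the empirical degree distribution $\dstat_{\sgt_n}(k)/n$ (Theorem~\ref{thm:dstats_simply_generated}, with the $\oexp(1)$ error bounds extracted from Janson's proof), and then deduces $|\dstat_{T_n}|_2^2\ge C|\dstat_{T_n}|_1$ by a finite truncation argument, treated separately in the cases $\nu<1$ and $\nu=1,\ \sigma^2=\infty$ (Corollary~\ref{cor:dstats_simply_generated}). Your Steps 1--2 are correct and complete: the cycle-lemma reduction, the bound $\p{A_n^C\mid E_n}\le \p{A_n^C}/\p{E_n}$, and the Hoeffding estimate on the truncated second-moment sum are all fine. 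What you are then forced to prove in Step 3 --- that $\p{\sum_{i\le n}Y_i=n-1}$ decays subexponentially --- is exactly the lower bound that underlies Janson's theorem, so your argument amounts to unfolding the black box the paper cites: you gain self-containedness at the price of redoing the hardest estimate yourself.

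Step 3 as written contains a quantitative claim that fails in general. The dyadic pigeonhole correctly yields infinitely many integers $k$ with, say, $\mu(k)\ge k^{-4}$, but these may be extremely sparse (e.g.\ heavy scales only at $j=2^i$): then the largest usable jump value $k^*$ below the required excess $L_n=\Theta(n)$ can be as small as $n^{o(1)}$, no collection of $O(\log n)$ jumps taken from heavy scales can come within $o(n)$ of $L_n$, and the cost is not $n^{-O(\log n)}$. The construction is repairable without changing the conclusion: use $m=\lfloor L_n/k^*\rfloor$ repeated jumps at a single good value $k^*$, which costs $\mu(k^*)^m\ge\exp(-4L_n\log(k^*)/k^*)=\exp(-o(n))$ because $k^*\to\infty$ with $n$, and absorb the remainder (which is $<k^*=o(n)$) into the truncated bulk, whose Cram\'er rate function is continuous and vanishes at its mean. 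This gives $\p{E_n}=\exp(-o(n))$, which is all you need against the fixed $e^{-c_1n}$ of Step 2. You also still owe the lattice bookkeeping you flag: $M'$ must be taken large enough that $\mu$ restricted to $[0,M']$ generates the span of the support of $\mu$, and the whole argument should be restricted to those $n$ with $\p{E_n}>0$, as the proposition implicitly assumes. With those repairs the proof goes through.
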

\begin{prop}\label{prop:dstats_zero_radius}
Fix a probability distribution $\mu$ with support $\N$, with $|\mu|_1 < 1$ and with $\sum_{c \ge 0} e^{tc}\mu(c)=\infty$ for all $t>0$. For $n \in \N$ let $T_n$ be a Bienaym\'e tree with offspring distribution $\mu$, conditioned to have size $n$, and let $\dstat_{T_n}$ be the degree statistics of $T_n$. Then for any $C > 0$, with very high probability $|\dstat_{T_n}|_2^2 \ge C |\dstat_{T_n}|_1 $. 
\end{prop}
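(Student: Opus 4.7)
The plan is to prove that under the stated hypotheses, the conditioning on $|T|=n$ triggers \emph{condensation}: with very high probability, $T_n$ contains a single vertex whose degree is of order $n$. Writing $\Delta(T_n) := \max\{c : \dstat_{T_n}(c) \ge 1\}$ for the maximum degree of $T_n$, one has $|\dstat_{T_n}|_2^2 \ge \Delta(T_n)^2$, so a bound $\Delta(T_n) \ge \alpha n$ with very high probability would immediately yield $|\dstat_{T_n}|_2^2 \ge \alpha^2 n^2 \ge C(n-1) = C|\dstat_{T_n}|_1$ for any fixed $C > 0$ and all $n$ sufficiently large.

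First, I would apply the Otter--Dwass cycle formula. Writing $S_k := X_1 + \cdots + X_k$ for i.i.d.\ $X_i$ with law $\mu$, one has $\p{|T|=n} = \tfrac{1}{n}\p{S_n = n-1}$, and, conditionally on $\{|T| = n\}$, the multiset of degrees of $T_n$ is distributed as $(X_1, \ldots, X_n)$ conditioned on $\{S_n = n-1\}$. Writing $m := |\mu|_1 < 1$, the event $\{S_n = n-1\}$ is a large deviation requiring an upward shift of size $(1-m)n$ from the drift. Under the zero-radius hypothesis $\sum_c e^{tc}\mu(c) = \infty$ for all $t > 0$, Cram\'er exponential tilting is unavailable and no tilt of $\mu$ is critical, so the next step is a \emph{one-big-jump principle}: with very high probability under the conditioning, the excess $(1-m)n$ is carried by a single coordinate. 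Concretely, I would aim to establish, for some $\alpha = \alpha(\mu) \in (0, 1-m)$,
\[
\probC{\max_{i \le n} X_i \ge \alpha n}{S_n = n-1} = 1 - \oexp(1).
\]
Translating back through Otter--Dwass then gives $\Delta(T_n) \ge \alpha n$ with very high probability.

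The primary obstacle is achieving the $\oexp(1)$ (exponentially small) error rate demanded by the paper's definition of ``very high probability'', as the classical condensation theorems --- Jonsson--Stef\'ansson, Janson's survey~\cite{MR2908619}, and Kortchemski's analyses of non-generic simply generated trees --- typically furnish only convergence in probability. A viable direct route is as follows: upper-bound
\[
\p{S_n = n-1,\, \max_i X_i < \alpha n}
\]
via a Cram\'er tilt of $\mu$ truncated at $\alpha n$ (which has finite MGF); the tilt parameter must be chosen to depend on $n$, since under zero radius the truncated MGF at any fixed positive argument diverges as the truncation is lifted. Simultaneously, lower-bound $\p{S_n = n-1}$ via the explicit scenario ``one $X_{i^*}$ equals a suitable $k \asymp (1-m)n$, and the remaining $n-1$ summands sum to $n-1-k$,'' controlled by a local central limit theorem together with positivity of $\mu$ on a sufficiently dense subsequence (guaranteed by the zero-radius hypothesis via $\limsup_k \mu(k)^{1/k} = 1$). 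The ratio of these two quantities produces the required $\oexp(1)$ factor, though one must be careful to verify that the lower bound holds uniformly across all $n$ for which $\p{|T|=n} > 0$, by choosing the big jump at an integer $k$ in the good subsequence rather than exactly at $(1-m)n$, and absorbing the resulting $o(n)$ slack into the constant $\alpha$.
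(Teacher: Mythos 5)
Your strategy hinges on establishing condensation --- a single vertex of degree at least $\alpha n$ --- but this is both stronger than what the proposition needs and, more importantly, \emph{false} under the stated hypotheses. The hypothesis $\sum_c e^{tc}\mu(c)=\infty$ for all $t>0$ only forces $\limsup_k \mu(k)^{1/k}=1$ along \emph{some} subsequence, and that subsequence can be extremely lacunary. For example, take $\mu$ supported on $\{0,1\}\cup\{k_j\}$ with $k_j = 2^{2^j}$ and $\mu(k_j)=e^{-k_j/j}$ (normalized, with $|\mu|_1<1$ arranged via $\mu(0),\mu(1)$): for $n$ just below $k_{j+1}$, the largest atom of $\mu$ not exceeding $n$ is $k_j = O(\sqrt{n})$, so on the event $\{S_n=n-1\}$ the excess $(1-m)n$ must be spread over $\Theta(\sqrt n)$ coordinates each of size $O(\sqrt n)$, and $\max_i X_i = o(n)$ deterministically. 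Your fallback of ``absorbing the $o(n)$ slack into $\alpha$'' does not apply, because the nearest good integer $k$ below $n$ is itself $o(n)$, so no fixed $\alpha>0$ works. Separately, even for well-behaved tails (say $\mu(k)\asymp e^{-\sqrt k}$), the conditional probability of \emph{not} having a single jump of size $\alpha n$ given $S_n=n-1$ is of order $e^{-c\sqrt n}$ rather than $e^{-cn}$, so the route through condensation cannot deliver the $\oexp(1)$ rate as the paper defines it. (Note the example above still satisfies the proposition: $|\dstat_{T_n}|_2^2\gtrsim \sqrt n\cdot n\gg Cn$; it is only your intermediate claim that fails.)

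The argument the paper actually uses is much softer and avoids any statement about the maximum degree. By exponential concentration of the empirical degree counts (Janson's Theorem 11.4, restated as Theorem~\ref{thm:dstats_simply_generated}, whose proof is a combinatorial partition-function comparison with $\oexp(1)$ error, not a tilting/local-CLT argument), $\dstat_{T_n}(k)/n \to \mu(k)$ for each fixed $k$ with exponentially small failure probability; here the limiting distribution $\pi$ equals $\mu$ itself because $\rho=1$ under your zero-radius-beyond-$1$ hypothesis. Since $|\mu|_1=\nu<1$, for any fixed $K$ the vertices of degree at most $K$ carry total degree at most $(\nu+o(1))n$ with very high probability, so degree mass at least $(1-\nu)n/2$ sits on vertices of degree exceeding $K$, whence $|\dstat_{T_n}|_2^2 \ge K\sum_{k>K}k\,\dstat_{T_n}(k) \ge K(1-\nu)n/2$; choosing $K$ large compared to $C/(1-\nu)$ finishes the proof. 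If you want to salvage your write-up, replace ``one big jump of linear size'' by ``linear degree mass above any fixed level $K$'' --- that weaker statement is exactly what the second-moment bound needs, and it is what the concentration of the bulk degree counts delivers with a genuinely exponential error rate.
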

\begin{prop}\label{prop:dstats_finite_variance}
Fix a probability distribution $\mu$ with support $\N$ and with $\mu(0) +\mu(1) < 1$. For $n \in \N$ let $T_n$ be a Bienaym\'e tree with offspring distribution $\mu$, conditioned to have size $n$, and let $\dstat_{T_n}$ be the degree statistics of $T_n$. Then for any $\eps > 0$, with very high probability $|\dstat_{T_n}|_2^2-\dstat_{T_n}(1)\ge |\dstat_{T_n}|_1\cdot 4(1-\mu(0)-\mu(1)-\eps)$. 
\end{prop}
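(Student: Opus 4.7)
The plan is to reduce the inequality to a concentration statement for the number of nodes of $T_n$ of degree at least $2$. Since $k^2 \ge 4$ for $k \ge 2$, one has
\[
|\dstat_{T_n}|_2^2 - \dstat_{T_n}(1) \;=\; \sum_{k \ge 2} k^2 \dstat_{T_n}(k) \;\ge\; 4 \sum_{k \ge 2} \dstat_{T_n}(k),
\]
so it suffices to show $N_{\ge 2} := \sum_{k \ge 2} \dstat_{T_n}(k) \ge (1-\mu(0)-\mu(1)-\eps)(n-1)$ with very high probability. By the cycle lemma (Otter--Dwass), $\dstat_{T_n}$ has the same distribution as the type of an iid $\mu$-sample $X_1,\ldots,X_n$ conditioned on $\sum_{i} X_i = n-1$, so the task becomes a lower-tail bound for $\#\{i : X_i \ge 2\}$ under this conditional law.

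The main step is exponential tilting. Focus first on the regime where $|\mu|_1 \le 1$ and $\mu$ has some exponential moment; let $G(s) = \sum_k \mu(k) s^k$, let $\rho$ be its radius of convergence, and choose $t \in [1,\rho)$ so that the tilted measure $\mu^{(t)}(k) := \mu(k) t^k/G(t)$ has mean $(n-1)/n$. Under $\mu^{(t)}$-iid sampling the event $\{\sum_i X_i = n-1\}$ is typical, with probability polynomially decaying in $n$, so any event of $\oexp(1)$ probability under the tilted law retains $\oexp(1)$ conditional probability. A Chernoff bound under $\mu^{(t)}$ then gives $\#\{i : X_i \le 1\} \le n(\mu^{(t)}(0) + \mu^{(t)}(1)) + \eta n$ except on an exponentially small event, for any fixed $\eta > 0$.

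The crucial algebraic input is that $\mu^{(t)}(0) + \mu^{(t)}(1) \le \mu(0) + \mu(1)$ for every $t \ge 1$. Setting $f(t) := (\mu(0)+\mu(1)) G(t) - \mu(0) - \mu(1)\, t$, one has $f(1) = 0$, $f''(t) = (\mu(0)+\mu(1)) G''(t) \ge 0$, and using $|\mu|_1 \ge \mu(1) + 2(1-\mu(0)-\mu(1))$,
\[
f'(1) = (\mu(0)+\mu(1))|\mu|_1 - \mu(1) \ge (1-\mu(0)-\mu(1))(2\mu(0)+\mu(1)) \ge 0,
\]
so $f \ge 0$ on $[1,\infty)$, which rearranges to the claimed bound on $\mu^{(t)}(0)+\mu^{(t)}(1)$. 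Combining this with the Chernoff estimate and the tilt-to-conditioning transfer yields $N_{\ge 2} \ge (1-\mu(0)-\mu(1)-\eps)(n-1)$ with very high probability.

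For offspring distributions $\mu$ with no exponential moment the tilt above is unavailable, but in that regime the hypotheses of Proposition~\ref{prop:dstats_infinite_variance} or~\ref{prop:dstats_zero_radius} are satisfied and already deliver $|\dstat_{T_n}|_2^2 \ge C|\dstat_{T_n}|_1$ with very high probability for every constant $C$, trivially implying the present inequality. The main obstacle I anticipate is the local-limit-style transfer from the tilted iid law to the conditioned law when $\mu^{(t)}$ has heavy tails, since standard local CLTs may fail; however, it suffices to establish $\p{\sum_i X_i = n-1} \ge n^{-O(1)}$ under the tilted law via a coupling to a lattice approximation, and this polynomial loss is comfortably absorbed by the exponential Chernoff savings.
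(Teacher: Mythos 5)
Your convexity lemma and the identification of $\dstat_{T_n}$ with the type of an iid $\mu$-sample conditioned on its sum are both correct, but the case analysis does not cover all $\mu$ with $\mu(0)+\mu(1)<1$, and in the most important missing case the strategy itself fails, not just its execution. The proposition places no restriction on $|\mu|_1$, so $\mu$ may be supercritical. Then the empirical degree distribution of $T_n$ concentrates around the \emph{down}-tilted law $\mu^{(t)}$ with $t<1$ chosen so that $\Psi(t)=1$, and your inequality $\mu^{(t)}(0)+\mu^{(t)}(1)\le\mu(0)+\mu(1)$ genuinely reverses there: your function $f$ satisfies $f(1)=0$, $f'(1)\ge 0$, $f''\ge 0$, which says nothing for $t<1$. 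Concretely, take $\mu(0)=0.1$, $\mu(10)=0.9$; the critical tilt has $t^{10}=1/81$, $\mu^{(t)}(0)=0.9$, $\mu^{(t)}(10)=0.1$, so $N_{\ge 2}\approx 0.1n$ and $4N_{\ge 2}\approx 0.4n$, far below $4(1-\mu(0)-\mu(1))(n-1)\approx 3.6n$. The proposition survives only because $\sum_{k\ge 2}k^2\dstat_{T_n}(k)\approx 100\cdot 0.1n$. Thus your very first step, discarding the $k^2$ weights in favour of the count $N_{\ge 2}$, throws away exactly the information needed in the supercritical regime; any correct proof must retain the weighted sum, as the paper's does.

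There are two further holes even within $|\mu|_1\le 1$. First, an exponential moment does not guarantee that a tilt to mean $(n-1)/n$ exists: one can have $\rho>1$ but $\nu=\lim_{t\uparrow\rho}\Psi(t)<1$ (e.g.\ $\mu(0)=0.9$ and $\mu(k)\propto k^{-4}2^{-k}$ for $k\ge 2$). In this condensation regime the conditioning event $\{\sum_iX_i=n-1\}$ is a large deviation under every admissible tilt, its probability is governed by one-big-jump asymptotics and need not be polynomially bounded below, so the ``polynomial loss absorbed by Chernoff savings'' transfer is not available. Second, your fallback to Propositions~\ref{prop:dstats_infinite_variance} and~\ref{prop:dstats_zero_radius} misses the case $|\mu|_1=1$, $|\mu|_2<\infty$, $\rho=1$ (easily patched: no tilt is needed and the classical local CLT applies), and also misses supercritical $\mu$ without exponential moments, since both propositions assume $|\mu|_1\le 1$. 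For comparison, the paper sidesteps all of this by citing Janson's exponential-concentration law of large numbers for degree statistics of simply generated trees (Theorem~\ref{thm:dstats_simply_generated}), obtaining $\dstat_{T_n}(k)\ge n(\mu(k)-\eps/2^k)$ for $2\le k\le K$ with very high probability and then bounding $\sum_{k=2}^Kk^2\dstat_{T_n}(k)$ directly. To rescue your route you would need the weighted analogue of your lemma, namely that $\sum_{k\ge 2}k^2\mu^{(t)}(k)\ge 4(1-\mu(0)-\mu(1))$ also for the down-tilts $t<1$, together with a genuinely different transfer argument in the condensation regime.
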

In the remainder of this section, we explain how Theorems~\ref{thm:main2},~\ref{thm:main3} and~\ref{thm:main4} follow from Propositions~\ref{prop:dstats_infinite_variance},~\ref{prop:dstats_zero_radius} and~\ref{prop:dstats_finite_variance} together with the bound from Theorem~\ref{thm:main}.
\begin{proof}[Proof of Theorem~\ref{thm:main2}]
For $n \ge 1$, let $V_n$ be a uniformly random node of $T_n$. 
Next, fix $\eps > 0$ small, and let $C=C(\eps)=1+\eps^{-4}$. Then let $E_n$ be the event that 
$|\dstat_{T_n}|_2^2 \ge C|\dstat_{T_n}|_1$. 

Now fix any degree statistics $\dstat$ with $\p{\dstat_{T_n}=\dstat} > 0$. Conditionally given that $\dstat_{T_n}=\dstat$, then $(T_n,V_n) \in_u \tset_\dstat^{\bullet}$. Thus, if $|\dstat|_2^2 \ge C|\dstat|_1$, then $|\dstat|_2^2-\dstat(1) \ge (C-1)|\dstat|_1=|\dstat|_1/\eps^4$, 
so for all $\beta \ge 17^{3/2}$ we have 
\begin{align}
\probC{|V_n| \ge \beta\eps^2 |\dstat|_1^{1/2}}{\dstat_{T_n}=\dstat}
& \le 
\probC{|V_n| \ge \beta\frac{|\dstat|_1}{(|\dstat|_2^2-\dstat(1))^{1/2}}}{\dstat_{T_n}=\dstat}
\nonumber\\
& \le \exp\pran{-\frac{\beta^{1/3}}{3}} + 2\exp\pran{-\frac{\beta^{2/3}}{24}}\, ,\label{eq:vnht_cond_bd}
\end{align}
where for the second equality we have used the bound from Theorem~\ref{thm:main} together with the fact that $|\dstat|_2 \le |\dstat|_1$ so $\tfrac{|\dstat|_1}{(|\dstat|_2^2-\dstat(1))^{1/2}} \ge 1$. 

We now use the bound 
\begin{align}\label{eq:vht_expbd}
\E{|V_n| \I{|V_n| \ge \beta\eps^2|\dstat|_1^{1/2}}}
& \le n\p{E_n^c} + 
\sup_{\dstat:|\dstat|_2^2 \ge C|\dstat|_1} \expC{|V_n| \I{|V_n| \ge \beta\eps^2 |\dstat|_1^{1/2}}}{\dstat_{T_n}=\dstat}\, .
\end{align}
The first term is $o(1)$ since $E_n$ occurs with very high probability by Proposition~\ref{prop:dstats_infinite_variance}. To bound the second term we use that for any non-negative integer random variable $X$ and any $z > 0$, we have 
\[
\E{X\I{X \ge \beta z}} \le \lfloor \beta z\rfloor + z\int_{\beta}^\infty \p{X \ge xz}\mathrm{d}x\,
\le  z\pran{\beta + \int_\beta^\infty \p{X \ge xz}\mathrm{d}x}\, .
\]
Using the bound from \eqref{eq:vnht_cond_bd}, it follows that 
\begin{align*}
& \sup_{\dstat:|\dstat|_2^2 \ge C|\dstat|_1} \expC{|V_n| \I{|V_n| \ge \beta\eps^2 |\dstat|_1^{1/2}}} {\dstat_{T_n}=\dstat}\\
& \le \eps^2|\dstat|_1^{1/2} \pran{\beta + 
 \int_\beta^\infty
\exp\pran{-\frac{x^{1/3}}{3}}\mathrm{d}x
 + \int_\beta^\infty 2\exp\pran{-\frac{x^{2/3}}{24}}\mathrm{d}x\,} .
\end{align*}
Taking $\beta=1/\eps$, for $\eps$ small the first integral is $O(e^{-\eps^{-1/3}/3})$ and the second is $O(e^{-\eps^{-2/3}/24})$, so \eqref{eq:vht_expbd} gives that 
\[
\E{|V_n| \I{|V_n| \ge \eps|\dstat|_1^{1/2}}}
\le \eps|\dstat|_1^{1/2} + O(\eps^2 e^{-\eps^{-1/3}/3}|\dstat|_1^{1/2})+ O(\eps^2 e^{-\eps^{-2/3}/24}|\dstat|_1^{1/2})\,.
\]
Since $\eps^2 e^{-\eps^{-1/3}/3}=O(\eps)$ and $\eps^2 e^{-\eps^{-2/3}/24} = O(\eps)$ for $\eps > 0$ small, this implies that 
\[
\e{|V_n|} = O(\eps |\dstat|_1^{1/2}), 
\]
and so $\e|V_n| = o(|\dstat|_1^{1/2})$ as $\eps>0$ can be chosen arbitrarily small. Since $|\dstat|_1 = (n-1)$, it follows that $n^{-1/2}|V_n| \to 0$ in probability and in expectation. 


Next, for any $\eps > 0$, with $C=1+\eps^{-4}$ as above, taking $\beta = (6\log n)^{3}$ in \eqref{eq:vnht_cond_bd}, we obtain that 
\begin{align*}
& \p{|V_n| \ge (6 \log n)^3\eps^2(n-1)^{1/2}}\\
&\le \oexp(1) + \sup_{\dstat:|\dstat|_2^2 \ge C|\dstat|_1}
\probC{|V_n| \ge (6 \log n)^3\eps^2(n-1)^{1/2}}{\dstat_{T_n}=\dstat}\\
& \le \oexp(1) + \exp\pran{-2 \log n} + 2\exp\pran{-\frac{3 \log^{2} n}{2}}\,\\
& = O\pran{\frac{1}{n^2}}\, .
\end{align*}
On the other hand, since $V_n$ is a uniformly random node of $T_n$, for any positive integer $h$ we have $\probC{|V_n|\ge h}{\hyt(T_n)\ge h} \ge 1/n$, so $\p{\hyt(T_n)\ge h}\le n\p{|V_n| \ge h}$ and thus 
\[
\p{\hyt(T_n) \ge (6 \log n)^3\eps^2(n-1)^{1/2}}=O\pran{\frac{1}{n}}.
\]
Since this holds for any $\eps > 0$, it follows that $\hyt(T_n)/((n-1)^{1/2}\log^3 n) \to 0$ in probability, and since 
\begin{align*}
\E{\hyt{ ( T_n ) }} & \le (6 \log n)^3\eps^2(n-1)^{1/2} + 
n\p{\hyt{ ( T_n ) } \ge (6 \log n)^3\eps^2(n-1)^{1/2}} \\
& \le (6 \log n)^3\eps^2(n-1)^{1/2}+ O(1)\, ,
\end{align*}
again for any $\eps > 0$, also $\hyt(T_n)/( (n-1)^{1/2}\log^3 n) \to 0$ in expectation. 

Finally, fix $\eps > 0$. For all $n$ large enough that $\e|V_n| \le \eps^2 n^{1/2}$, by Markov's inequality, 
\[
\p{|V_{n}| \ge \eps n^{1/2}} \le \eps.
\]
On the other hand, since $V_n$ is a uniformly random node of $T_n$, 
\[
\p{|V_{n}| \ge \eps n^{1/2}}
\ge 
\frac{1}{2} \p{ |\{u \in T_n: |u| \ge \eps n^{1/2}\}| \ge \frac{n}{2}},
\]
so 
\[
\p{ |\{u \in T_n: |u| \ge \eps n^{1/2}\}| \ge \frac{n}{2}} \le 2\eps. 
\]
Further, if $\{u \in T_n: |u| \ge \eps n^{1/2}\}| <n/2$ then there are more than $n/2$ nodes in the first $\eps n^{1/2}$ levels of the tree, so $\wid(T_n) \ge n^{1/2}/(2\eps)$. It follows that 
\[
\p{\wid(T_n) \ge \frac{n^{1/2}}{2\eps}} \ge 1-2\eps\, ;
\]
since $\eps > 0$ was arbitrary, it follows that $n^{-1/2}\wid(T_n) \to \infty$ in probability and in expectation.
\end{proof}
Theorem~\ref{thm:main3} follows from Proposition~\ref{prop:dstats_zero_radius} in exactly the same way as Theorem~\ref{thm:main2} follows from Proposition~\ref{prop:dstats_infinite_variance}, so we omit the details. The proof of Theorem~\ref{thm:main4} from Proposition~\ref{prop:dstats_finite_variance} is quite similar but not identical, so we do provide a (somewhat terser) explanation.
\begin{proof}[Proof of Theorem~\ref{thm:main4}]
Fix any degree statistics $\dstat$ and let $(T,V) \in_u \tset_{\dstat}^{\bullet}$. Then integrating the tail bound from Theorem~\ref{thm:main} over $\beta \ge  17^{3/2}$, in essentially the same way as in the proof of Theorem~\ref{thm:main2}, it follows that 
\begin{equation}\label{eq:universal_sample_bd}
\e|V| \le C \frac{|\dstat|_1}{(|\dstat|_2^2 - \dstat(1))^{1/2}}\, ,
\end{equation}
where $C>0$ is a universal constant.

Since $(|\dstat|_2^2-\dstat(1))^{1/2} \le |\dstat|_2 \le |\dstat|_1=n-1$, using the tail bound from Theorem~\ref{thm:main} with $\beta = (6 \log n)^3$, 
we also obtain that 
\[
\p{|V| \ge (6 \log n)^3 \frac{|\dstat|_1}{(|\dstat|_2^2 - \dstat(1))^{1/2}}}
\le \exp(-2\log n) + 2\exp\pran{- \frac{3\log^{2} n}{2}}\le \frac{1}{n},
\]
the last bound holding whenever $|\dstat|_1$ is sufficiently large. Since 
\[
\p{\hyt(T)\ge h}\le n \p{|V| \ge h}
\]
and $\hyt(T)\le n-1$, it follows that 
\begin{equation}\label{eq:uniform_ht_bd}
\E{\hyt(T)} \le C\log^{3}n \frac{|\dstat|_1}{(|\dstat|_2^2 - \dstat(1))^{1/2}}\, ,
\end{equation}
with $C>0$ again a universal constant. 

Now let $T_n$ be a Bienaym\'e tree with offspring distribution $\mu$ as in the statement of Theorem~\ref{thm:main4}, and let $V_n$ be a uniformly random node of $T_n$. 
Let $\eps = (1-\mu(0)-\mu(1))/2$ and let $E_n$ be the event that $|\dstat_{T_n}|_2^2-\dstat_{T_n}(1)\ge |\dstat_{T_n}|_1\cdot 4(1-\mu(0)-\mu(1)-\eps) = 2|\dstat_{T_n}|_1(1-\mu(0)-\mu(1))$. By Proposition~\ref{prop:dstats_finite_variance}, $E_n$ occurs with very high probability. 

On the event $E_n$ we have 
\[
\frac{|\dstat_{T_n}|_1}{(|\dstat_{T_n}|_2^2 - \dstat_{T_n}(1))^{1/2}}
\le \frac{n^{1/2}}{2^{1/2}(1-\mu(0)-\mu(1))^{1/2}}\, ,
\]
so by (\ref{eq:universal_sample_bd}) we have 
\begin{align*}
\e{|V_n|} & \le n\p{E_n^c}+ \mathop{\sup_{\dstat:|\dstat|_1=n +1}}_{|\dstat_{T_n}|_2^2-\dstat_{T_n}(1)\ge2|\dstat_{T_n}|_1(1-\mu(0)-\mu(1))} 
\expC{|V_n|}{\dstat_{T_n}=\dstat}\\
& \le o(1)+C\frac{n^{1/2}}{2^{1/2}(1-\mu(0)-\mu(1))^{1/2}}\, ,
\end{align*}
where in the second inequality we have used that $\p{E_n^c}=\oexp(1)$. The lower bound on $\E{\wid(T_n)}$ follows from this upper bound on $\e|V_n|$ just as in the proof of Theorem~\ref{thm:main2}. Finally, 
\begin{align*}
\E{\hyt(T_n)} & \le n\p{E_n^c} + \mathop{\sup_{\dstat:|\dstat|_1=n+1}}_{|\dstat_{T_n}|_2^2-\dstat_{T_n}(1)\ge2|\dstat_{T_n}|_1(1-\mu(0)-\mu(1))}
\expC{\hyt(T_n)}{\dstat_{T_n}=\dstat} \\
& \le o(1) + C\log^3 n \frac{n^{1/2}}{2^{1/2}(1-\mu(0)-\mu(1))^{1/2}}\, ,
\end{align*}
the second bound holding by (\ref{eq:uniform_ht_bd}) and since $\p{E_n^c}=\oexp(1)$. This establishes the requisite bound on $\E{\hyt(T_n)}$, and completes the proof. 
\end{proof}
\section{\bf Proof of Proposition~\ref{prop:sampler}}

We begin with some combinatorial definitions and facts which we will require for the proof. A {\em forest} is an ordered sequence $f=(t_1,\ldots,t_a)$ of plane trees. 
The degree statistics of $f$ is the sequence $\dstat_f=(\dstat_f(c),c \ge 0)$ where $\dstat_f(c)$ is the number of nodes of $f$ with $c$ children. 

Fix integers $1 \le a \le n$ and let $\dstat=(\dstat(c),c\ge 0)$ be a sequence of non-negative integers with $\sum_{c \ge 0} \dstat(c) = n$ and $\sum_{c \ge 0} c\dstat(c)=n-a$. Any forest with degree statistics $\dstat$ has $n$ nodes and is composed of $a$ trees. 
We write $\tset_{\dstat}$ to denote the set of forests with degree statistics $\dstat$. 
A single tree $t$ can be interpreted as a forest $f=(t)$, which makes this notation agree with and extend the previously introduced notation $\tset_\dstat$ for the set of trees with given degree statistics (in which case $a=1$, i.e., $|\dstat|_1 = n-1$).
By \cite[Exercise 6.2.1]{MR2245368}, it holds that 
\begin{equation}\label{eq:dstat_count}
|\tset_{\dstat}| = \frac{a}{n} {n \choose \dstat(c),c \ge 0} = \frac{a}{n} \frac{n!}{\prod_{c \ge 0} \dstat(c)!} \, .
\end{equation}

Next write $\tset_\dstat^\bullet$ for the set of forests with degree statistics $\dstat$  with a marked node, and $\tset_\dstat^{(1)}$ for the subset of $\tset_{\dstat}^\bullet$ where the mark is in the first tree:
\begin{align*}
\tset_\dstat^{\bullet} & = \{(f,v):f=(t_1,\ldots,t_a) \in \tset_\dstat,v \mbox{ is a node of }f\},\\
\tset_\dstat^{(1)} & = \{(f,v):f=(t_1,\ldots,t_a) \in \tset_\dstat,v \in t_1\}\, .
\end{align*}
For any forest $f \in \tset_\dstat$ there are $n$ ways to choose a node to mark, so $|\tset_\dstat^{\bullet}|=n|\tset_\dstat|$. Moreover, there is a natural $a$-to-$1$ correspondence between $\tset_\dstat^{\bullet}$ and $\tset_\dstat^{(1)}$: if $(f,v) \in \tset_\dstat^{\bullet}$ with $f=(t_1,\ldots,t_a)$ and $v \in t_i$, then 
\[
((t_i,t_{i+1},\ldots,t_a,t_1,\ldots,t_{i-1}),v) \in \tset_\dstat^{(1)}. 
\]
It follows that 
\begin{equation}\label{eq:marked_forest_count}
|\tset_\dstat^{(1)}| = \frac{n}{a}|\tset_{\dstat}| = {n \choose \dstat(c),c \ge 0} \, .
\end{equation}

\begin{figure}[htb]
    \color{black}  
    \centering

\tikzset{every picture/.style={line width=0.75pt}} 

\begin{tikzpicture}[x=0.75pt,y=0.75pt,yscale=-1,xscale=1]

\draw [color={rgb, 255:red, 0; green, 0; blue, 0 }  ,draw opacity=1 ]   (332.48,96.71) -- (332.48,123.99) ;
\draw   (352.93,42.16) -- (363.15,69.43) -- (342.7,69.43) -- cycle ;
\draw [color={rgb, 255:red, 208; green, 2; blue, 27 }  ,draw opacity=1, line width = 0.6mm ]   (332.61,15.17) -- (352.93,42.16) ;
\draw   (312.02,42.16) -- (322.25,69.43) -- (301.8,69.43) -- cycle ;
\draw [color={rgb, 255:red, 208; green, 2; blue, 27 }  ,draw opacity=1, line width = 0.6mm ][fill={rgb, 255:red, 208; green, 2; blue, 27 }  ,fill opacity=1 ]   (332.48,14.87) -- (312.02,42.16) ;
\draw   (352.79,96.43) -- (363.02,123.7) -- (342.57,123.7) -- cycle ;
\draw [color={rgb, 255:red, 208; green, 2; blue, 27 }  ,draw opacity=1, line width = 0.6mm ]   (332.48,69.93) -- (352.79,96.93) ;
\draw   (312.02,124.01) -- (322.25,151.28) -- (301.8,151.28) -- cycle ;
\draw [color={rgb, 255:red, 0; green, 0; blue, 0 }  ,draw opacity=1 ]   (332.48,96.71) -- (312.02,124.01) ;
\draw   (288.16,42.14) -- (298.39,69.41) -- (277.94,69.41) -- cycle ;
\draw [color={rgb, 255:red, 208; green, 2; blue, 27 }  ,draw opacity=1, line width = 0.6mm ]   (332.61,15.17) -- (288.16,42.14) ;
\draw   (376.79,42.16) -- (387.02,69.43) -- (366.56,69.43) -- cycle ;
\draw [color={rgb, 255:red, 208; green, 2; blue, 27 }  ,draw opacity=1, line width = 0.6mm ]   (332.48,14.87) -- (376.79,42.16) ;
\draw   (376.79,96.73) -- (387.02,123.99) -- (366.56,123.99) -- cycle ;
\draw [color={rgb, 255:red, 208; green, 2; blue, 27 }  ,draw opacity=1, line width = 0.6mm ]   (332.48,69.93) -- (376.79,97.23) ;
\draw   (288.03,123.69) -- (298.25,150.96) -- (277.8,150.96) -- cycle ;
\draw [color={rgb, 255:red, 0; green, 0; blue, 0 }  ,draw opacity=1 ]   (332.48,96.71) -- (288.03,123.69) ;
\draw   (332.48,178.56) -- (342.7,205.83) -- (322.25,205.83) -- cycle ;
\draw [color={rgb, 255:red, 208; green, 2; blue, 27 }  ,draw opacity=1, line width = 0.6mm ]  (332.48,15.37) -- (332.48,69.93) ;
\draw [shift={(332.48,69.93)}, rotate = 90] [color={rgb, 255:red, 208; green, 2; blue, 27 }  ,draw opacity=1][fill={rgb, 255:red, 208; green, 2; blue, 27 }  ,fill opacity=1 ][line width=0.75]      (0, 0) circle [x radius= 2.01, y radius= 2.01]   ;
\draw [shift={(332.48,42.65)}, rotate = 90] [color={rgb, 255:red, 208; green, 2; blue, 27 }  ,draw opacity=1 ][fill={rgb, 255:red, 208; green, 2; blue, 27 }  ,fill opacity=1 ][line width=0.75]      (0, 0) circle [x radius= 2.01, y radius= 2.01]   ;
\draw [shift={(332.48,15.37)}, rotate = 90] [color={rgb, 255:red, 208; green, 2; blue, 27 }  ,draw opacity=1, line width = 0.6mm ][fill={rgb, 255:red, 208; green, 2; blue, 27 }  ,fill opacity=1 ][line width=0.75]      (0, 0) circle [x radius= 2.01, y radius= 2.01]   ;
\draw  [color={rgb, 255:red, 208; green, 2; blue, 27 }  ,draw opacity=1 ][fill={rgb, 255:red, 208; green, 2; blue, 27 }  ,fill opacity=1 ] (310.07,42.16) .. controls (310.07,41.08) and (310.94,40.2) .. (312.02,40.2) .. controls (313.1,40.2) and (313.98,41.08) .. (313.98,42.16) .. controls (313.98,43.24) and (313.1,44.12) .. (312.02,44.12) .. controls (310.94,44.12) and (310.07,43.24) .. (310.07,42.16) -- cycle ;
\draw  [color={rgb, 255:red, 208; green, 2; blue, 27 }  ,draw opacity=1 ][fill={rgb, 255:red, 208; green, 2; blue, 27 }  ,fill opacity=1 ] (350.97,42.16) .. controls (350.97,41.08) and (351.85,40.2) .. (352.93,40.2) .. controls (354.01,40.2) and (354.89,41.08) .. (354.89,42.16) .. controls (354.89,43.24) and (354.01,44.12) .. (352.93,44.12) .. controls (351.85,44.12) and (350.97,43.24) .. (350.97,42.16) -- cycle ;
\draw  [color={rgb, 255:red, 208; green, 2; blue, 27 }  ,draw opacity=1 ][fill={rgb, 255:red, 208; green, 2; blue, 27 }  ,fill opacity=1 ] (374.83,42.16) .. controls (374.83,41.08) and (375.71,40.2) .. (376.79,40.2) .. controls (377.87,40.2) and (378.75,41.08) .. (378.75,42.16) .. controls (378.75,43.24) and (377.87,44.12) .. (376.79,44.12) .. controls (375.71,44.12) and (374.83,43.24) .. (374.83,42.16) -- cycle ;
\draw  [color={rgb, 255:red, 208; green, 2; blue, 27 }  ,draw opacity=1 ][fill={rgb, 255:red, 208; green, 2; blue, 27 }  ,fill opacity=1 ] (350.83,96.43) .. controls (350.83,95.35) and (351.71,94.47) .. (352.79,94.47) .. controls (353.87,94.47) and (354.75,95.35) .. (354.75,96.43) .. controls (354.75,97.51) and (353.87,98.39) .. (352.79,98.39) .. controls (351.71,98.39) and (350.83,97.51) .. (350.83,96.43) -- cycle ;
\draw  [color={rgb, 255:red, 208; green, 2; blue, 27 }  ,draw opacity=1 ][fill={rgb, 255:red, 208; green, 2; blue, 27 }  ,fill opacity=1 ] (374.83,96.73) .. controls (374.83,95.64) and (375.71,94.77) .. (376.79,94.77) .. controls (377.87,94.77) and (378.75,95.64) .. (378.75,96.73) .. controls (378.75,97.81) and (377.87,98.68) .. (376.79,98.68) .. controls (375.71,98.68) and (374.83,97.81) .. (374.83,96.73) -- cycle ;
\draw [color={rgb, 255:red, 0; green, 0; blue, 0 }  ,draw opacity=1 ]   (332.48,123.99) -- (332.48,178.56) ;
\draw [shift={(332.48,178.56)}, rotate = 90] [color={rgb, 255:red, 0; green, 0; blue, 0 }  ,draw opacity=1 ][fill={rgb, 255:red, 0; green, 0; blue, 0 }  ,fill opacity=1 ][line width=0.75]      (0, 0) circle [x radius= 2.01, y radius= 2.01]   ;
\draw [shift={(332.48,151.28)}, rotate = 90] [color={rgb, 255:red, 0; green, 0; blue, 0 }  ,draw opacity=1 ][fill={rgb, 255:red, 0; green, 0; blue, 0 }  ,fill opacity=1 ][line width=0.75]      (0, 0) circle [x radius= 2.01, y radius= 2.01]   ;
\draw [shift={(332.48,123.99)}, rotate = 90] [color={rgb, 255:red, 0; green, 0; blue, 0 }  ,draw opacity=1 ][fill={rgb, 255:red, 0; green, 0; blue, 0 }  ,fill opacity=1 ][line width=0.75]      (0, 0) circle [x radius= 2.01, y radius= 2.01]   ;
\draw  [color={rgb, 255:red, 0; green, 0; blue, 0 }  ,draw opacity=1 ][fill={rgb, 255:red, 0; green, 0; blue, 0 }  ,fill opacity=1 ] (286.07,123.69) .. controls (286.07,122.61) and (286.94,121.73) .. (288.03,121.73) .. controls (289.11,121.73) and (289.98,122.61) .. (289.98,123.69) .. controls (289.98,124.77) and (289.11,125.65) .. (288.03,125.65) .. controls (286.94,125.65) and (286.07,124.77) .. (286.07,123.69) -- cycle ;
\draw  [color={rgb, 255:red, 0; green, 0; blue, 0 }  ,draw opacity=1 ][fill={rgb, 255:red, 0; green, 0; blue, 0 }  ,fill opacity=1 ] (310.07,124.01) .. controls (310.07,122.92) and (310.94,122.05) .. (312.02,122.05) .. controls (313.1,122.05) and (313.98,122.92) .. (313.98,124.01) .. controls (313.98,125.09) and (313.1,125.96) .. (312.02,125.96) .. controls (310.94,125.96) and (310.07,125.09) .. (310.07,124.01) -- cycle ;
\draw  [color={rgb, 255:red, 208; green, 2; blue, 27 }  ,draw opacity=1 ][fill={rgb, 255:red, 208; green, 2; blue, 27 }  ,fill opacity=1 ] (286.2,42.14) .. controls (286.2,41.06) and (287.08,40.18) .. (288.16,40.18) .. controls (289.24,40.18) and (290.12,41.06) .. (290.12,42.14) .. controls (290.12,43.22) and (289.24,44.1) .. (288.16,44.1) .. controls (287.08,44.1) and (286.2,43.22) .. (286.2,42.14) -- cycle ;
\draw  [color={rgb, 255:red, 208; green, 2; blue, 27 }  ,draw opacity=1 ] (394,100) .. controls (398.67,100) and (401,97.67) .. (401,93) -- (401,65.88) .. controls (401,59.21) and (403.33,55.88) .. (408,55.88) .. controls (403.33,55.88) and (401,52.55) .. (401,45.88)(401,48.88) -- (401,18.75) .. controls (401,14.08) and (398.67,11.75) .. (394,11.75) ;
\draw   (180.68,42.06) -- (190.9,69.33) -- (170.45,69.33) -- cycle ;
\draw [color={rgb, 255:red, 74; green, 144; blue, 226 }  ,draw opacity=1, line width = 0.6mm]    (160.76,15.07) -- (181.08,42.06) ;
\draw   (139.77,42.06) -- (150,69.33) -- (129.55,69.33) -- cycle ;
\draw [color={rgb, 255:red, 74; green, 144; blue, 226 }  ,draw opacity=1, line width = 0.6mm]    (160.63,14.77) -- (140.17,42.06) ;
\draw   (180.54,96.33) -- (190.77,123.6) -- (170.32,123.6) -- cycle ;
\draw [color={rgb, 255:red, 74; green, 144; blue, 226 }  ,draw opacity=1, line width = 0.6mm ]   (160.23,69.83) -- (180.54,96.83) ;
\draw   (139.77,123.91) -- (150,151.17) -- (129.55,151.17) -- cycle ;
\draw [color={rgb, 255:red, 74; green, 144; blue, 226 }  ,draw opacity=1, line width = 0.6mm ]   (160.23,97.11) -- (139.77,124.41) ;
\draw   (115.91,42.04) -- (126.14,69.31) -- (105.69,69.31) -- cycle ;
\draw [color={rgb, 255:red, 74; green, 144; blue, 226 }  ,draw opacity=1, line width = 0.6mm ]    (160.36,15.07) -- (115.91,42.04) ;
\draw   (204.54,42.06) -- (214.77,69.33) -- (194.31,69.33) -- cycle ;
\draw [color={rgb, 255:red, 74; green, 144; blue, 226 }  ,draw opacity=1 , line width = 0.6mm]    (160.63,14.77) -- (204.94,42.06) ;
\draw   (204.54,96.62) -- (214.77,123.89) -- (194.31,123.89) -- cycle ;
\draw [color={rgb, 255:red, 74; green, 144; blue, 226 }  ,draw opacity=1, line width = 0.6mm ]   (160.23,69.83) -- (204.54,97.12) ;
\draw   (115.78,123.59) -- (126,150.86) -- (105.55,150.86) -- cycle ;
\draw [color={rgb, 255:red, 74; green, 144; blue, 226 }  ,draw opacity=1, line width = 0.6mm ]   (160.23,97.11) -- (115.78,124.09) ;
\draw   (160.23,178.46) -- (170.45,205.73) -- (150,205.73) -- cycle ;
\draw   (183.68,178.46) -- (193.9,205.73) -- (173.45,205.73) -- cycle ;
\draw [color={rgb, 255:red, 74; green, 144; blue, 226 }  ,draw opacity=1, line width = 0.6mm ]   (160.23,151.67) -- (183.68,178.96) ;
\draw [color={rgb, 255:red, 74; green, 144; blue, 226 }  ,draw opacity=1, line width = 0.6mm ][fill={rgb, 255:red, 74; green, 144; blue, 226 }  ,fill opacity=1 ]  (160.63,14.77) -- (160.63,69.33) ;
\draw [shift={(160.63,69.33)}, rotate = 90] [color={rgb, 255:red, 74; green, 144; blue, 226 }  ,draw opacity=1 ][fill={rgb, 255:red, 74; green, 144; blue, 226 }  ,fill opacity=1 ][line width = 0.75pt]    (0, 0) circle [x radius= 2.01, y radius= 2.01]   ;
\draw [shift={(160.63,42.05)}, rotate = 90] [color={rgb, 255:red, 74; green, 144; blue, 226 }  , draw opacity=1 ][fill={rgb, 255:red, 74; green, 144; blue, 226 }  ,fill opacity=1 ][line width = 0.75pt]     (0, 0) circle [x radius= 2.01, y radius= 2.01]   ;
\draw [shift={(160.63,14.77)}, rotate = 90] [color={rgb, 255:red, 74; green, 144; blue, 226 }  ,draw opacity=1][fill={rgb, 255:red, 74; green, 144; blue, 226 }  ,fill opacity=1 ][line width = 0.75pt]      (0, 0) circle [x radius= 2.01, y radius= 2.01]   ;
\draw  [color={rgb, 255:red, 74; green, 144; blue, 226 }  ,draw opacity=1 ][fill={rgb, 255:red, 74; green, 144; blue, 226 }  ,fill opacity=1 ] (137.82,42.06) .. controls (137.82,40.98) and (138.69,40.1) .. (139.77,40.1) .. controls (140.85,40.1) and (141.73,40.98) .. (141.73,42.06) .. controls (141.73,43.14) and (140.85,44.02) .. (139.77,44.02) .. controls (138.69,44.02) and (137.82,43.14) .. (137.82,42.06) -- cycle ;
\draw  [color={rgb, 255:red, 74; green, 144; blue, 226 }  ,draw opacity=1 ][fill={rgb, 255:red, 74; green, 144; blue, 226 }  ,fill opacity=1 ] (178.72,42.06) .. controls (178.72,40.98) and (179.6,40.1) .. (180.68,40.1) .. controls (181.76,40.1) and (182.64,40.98) .. (182.64,42.06) .. controls (182.64,43.14) and (181.76,44.02) .. (180.68,44.02) .. controls (179.6,44.02) and (178.72,43.14) .. (178.72,42.06) -- cycle ;
\draw  [color={rgb, 255:red, 74; green, 144; blue, 226 }  ,draw opacity=1 ][fill={rgb, 255:red, 74; green, 144; blue, 226 }  ,fill opacity=1 ] (202.58,42.06) .. controls (202.58,40.98) and (203.46,40.1) .. (204.54,40.1) .. controls (205.62,40.1) and (206.5,40.98) .. (206.5,42.06) .. controls (206.5,43.14) and (205.62,44.02) .. (204.54,44.02) .. controls (203.46,44.02) and (202.58,43.14) .. (202.58,42.06) -- cycle ;
\draw  [color={rgb, 255:red, 74; green, 144; blue, 226 }  ,draw opacity=1 ][fill={rgb, 255:red, 74; green, 144; blue, 226 }  ,fill opacity=1 ] (178.58,96.33) .. controls (178.58,95.25) and (179.46,94.37) .. (180.54,94.37) .. controls (181.62,94.37) and (182.5,95.25) .. (182.5,96.33) .. controls (182.5,97.41) and (181.62,98.29) .. (180.54,98.29) .. controls (179.46,98.29) and (178.58,97.41) .. (178.58,96.33) -- cycle ;
\draw  [color={rgb, 255:red, 74; green, 144; blue, 226 }  ,draw opacity=1 ][fill={rgb, 255:red, 74; green, 144; blue, 226 }  ,fill opacity=1 ] (202.58,96.62) .. controls (202.58,95.54) and (203.46,94.67) .. (204.54,94.67) .. controls (205.62,94.67) and (206.5,95.54) .. (206.5,96.62) .. controls (206.5,97.71) and (205.62,98.58) .. (204.54,98.58) .. controls (203.46,98.58) and (202.58,97.71) .. (202.58,96.62) -- cycle ;
\draw [color={rgb, 255:red, 74; green, 144; blue, 226 }  ,draw opacity=1, line width = 0.6mm ]   (160.23,69.83) -- (160.23,124.39) ;
\draw [shift={(160.23,124.39)}, rotate = 90] [color={rgb, 255:red, 74; green, 144; blue, 226 }  ,draw opacity=1 ][fill={rgb, 255:red, 74; green, 144; blue, 226 }  ,fill opacity=1 ][line width=0.75]      (0, 0) circle [x radius= 2.01, y radius= 2.01]   ;
\draw [shift={(160.23,97.11)}, rotate = 90] [color={rgb, 255:red, 74; green, 144; blue, 226 }  ,draw opacity=1 ][fill={rgb, 255:red, 74; green, 144; blue, 226 }  ,fill opacity=1 ][line width=0.75]      (0, 0) circle [x radius= 2.01, y radius= 2.01]   ;
\draw [shift={(160.23,69.83)}, rotate = 90] [color={rgb, 255:red, 74; green, 144; blue, 226 }  ,draw opacity=1 ][fill={rgb, 255:red, 74; green, 144; blue, 226 }  ,fill opacity=1 ][line width=0.75]      (0, 0) circle [x radius= 2.01, y radius= 2.01]   ;
\draw [color={rgb, 255:red, 74; green, 144; blue, 226 }  ,draw opacity=1, line width = 0.6mm ]   (160.23,124.39) -- (160.23,178.96) ;
\draw [shift={(160.23,178.96)}, rotate = 90] [color={rgb, 255:red, 74; green, 144; blue, 226 }  ,draw opacity=1 ][fill={rgb, 255:red, 74; green, 144; blue, 226 }  ,fill opacity=1 ][line width=0.75]      (0, 0) circle [x radius= 2.01, y radius= 2.01]   ;
\draw [shift={(160.23,151.67)}, rotate = 90] [color={rgb, 255:red, 74; green, 144; blue, 226 }  ,draw opacity=1 ][fill={rgb, 255:red, 74; green, 144; blue, 226 }  ,fill opacity=1 ][line width=0.75]      (0, 0) circle [x radius= 2.01, y radius= 2.01]   ;
\draw [shift={(160.23,124.39)}, rotate = 90] [color={rgb, 255:red, 74; green, 144; blue, 226 }  ,draw opacity=1 ][fill={rgb, 255:red, 74; green, 144; blue, 226 }  ,fill opacity=1 ][line width=0.75]      (0, 0) circle [x radius= 2.01, y radius= 2.01]   ;
\draw  [color={rgb, 255:red, 74; green, 144; blue, 226 }  ,draw opacity=1 ][fill={rgb, 255:red, 74; green, 144; blue, 226 }  ,fill opacity=1 ] (113.82,123.59) .. controls (113.82,122.51) and (114.69,121.63) .. (115.78,121.63) .. controls (116.86,121.63) and (117.73,122.51) .. (117.73,123.59) .. controls (117.73,124.67) and (116.86,125.55) .. (115.78,125.55) .. controls (114.69,125.55) and (113.82,124.67) .. (113.82,123.59) -- cycle ;
\draw  [color={rgb, 255:red, 74; green, 144; blue, 226 }  ,draw opacity=1 ][fill={rgb, 255:red, 74; green, 144; blue, 226 }  ,fill opacity=1 ] (137.82,123.91) .. controls (137.82,122.82) and (138.69,121.95) .. (139.77,121.95) .. controls (140.85,121.95) and (141.73,122.82) .. (141.73,123.91) .. controls (141.73,124.99) and (140.85,125.86) .. (139.77,125.86) .. controls (138.69,125.86) and (137.82,124.99) .. (137.82,123.91) -- cycle ;
\draw  [color={rgb, 255:red, 74; green, 144; blue, 226 }  ,draw opacity=1 ][fill={rgb, 255:red, 74; green, 144; blue, 226 }  ,fill opacity=1 ] (181.72,178.46) .. controls (181.72,177.37) and (182.6,176.5) .. (183.68,176.5) .. controls (184.76,176.5) and (185.64,177.37) .. (185.64,178.46) .. controls (185.64,179.54) and (184.76,180.41) .. (183.68,180.41) .. controls (182.6,180.41) and (181.72,179.54) .. (181.72,178.46) -- cycle ;
\draw  [color={rgb, 255:red, 74; green, 144; blue, 226 }  ,draw opacity=1 ][fill={rgb, 255:red, 74; green, 144; blue, 226 }  ,fill opacity=1 ] (113.95,42.04) .. controls (113.95,40.96) and (114.83,40.08) .. (115.91,40.08) .. controls (116.99,40.08) and (117.87,40.96) .. (117.87,42.04) .. controls (117.87,43.12) and (116.99,44) .. (115.91,44) .. controls (114.83,44) and (113.95,43.12) .. (113.95,42.04) -- cycle ;
\draw  [color={rgb, 255:red, 74; green, 144; blue, 226 }  ,draw opacity=1 ] (99.25,12.5) .. controls (94.58,12.5) and (92.25,14.83) .. (92.25,19.5) -- (92.25,86.38) .. controls (92.25,93.05) and (89.92,96.38) .. (85.25,96.38) .. controls (89.92,96.38) and (92.25,99.71) .. (92.25,106.38)(92.25,103.38) -- (92.25,173.25) .. controls (92.25,177.92) and (94.58,180.25) .. (99.25,180.25) ;
\draw   (355.93,178.56) -- (366.15,205.83) -- (345.7,205.83) -- cycle ;
\draw [color={rgb, 255:red, 0; green, 0; blue, 0 }  ,draw opacity=1 ]   (332.48,151.28) -- (355.93,178.56) ;
\draw  [color={rgb, 255:red, 0; green, 0; blue, 0 }  ,draw opacity=1 ][fill={rgb, 255:red, 0; green, 0; blue, 0 }  ,fill opacity=1 ] (353.97,178.56) .. controls (353.97,177.48) and (354.85,176.6) .. (355.93,176.6) .. controls (357.01,176.6) and (357.89,177.48) .. (357.89,178.56) .. controls (357.89,179.64) and (357.01,180.52) .. (355.93,180.52) .. controls (354.85,180.52) and (353.97,179.64) .. (353.97,178.56) -- cycle ;
\draw  [color={rgb, 255:red, 208; green, 2; blue, 27 }  ,draw opacity=1 ][fill={rgb, 255:red, 208; green, 2; blue, 27 }  ,fill opacity=1 ] (330.38,96.71) .. controls (330.38,95.56) and (331.32,94.62) .. (332.48,94.62) .. controls (333.63,94.62) and (334.57,95.56) .. (334.57,96.71) .. controls (334.57,97.87) and (333.63,98.81) .. (332.48,98.81) .. controls (331.32,98.81) and (330.38,97.87) .. (330.38,96.71) -- cycle ;
\draw [color={rgb, 255:red, 208; green, 2; blue, 27 }  ,draw opacity=1, line width = 0.6mm ]   (332.48,69.93) -- (332.48,97.21) ;

\draw (354.32,9.85) node   [align=left] {\begin{minipage}[lt]{19.97pt}\setlength\topsep{0pt}
$\displaystyle r( t)$
\end{minipage}};
\draw (434.5,57.6) node  [color={rgb, 255:red, 208; green, 2; blue, 27 }  ,opacity=1 ] [align=left] {\begin{minipage}[lt]{33.67pt}\setlength\topsep{0pt}
$\displaystyle S_{k}( t,v)$
\end{minipage}};
\draw (322.57,179) node   [align=left] {\begin{minipage}[lt]{10.45pt}\setlength\topsep{0pt}
$\displaystyle v$
\end{minipage}};
\draw (182.07,9.75) node   [align=left] {\begin{minipage}[lt]{19.97pt}\setlength\topsep{0pt}
$\displaystyle r( t)$
\end{minipage}};
\draw (62,97) node  [color={rgb, 255:red, 74; green, 144; blue, 226 }  ,opacity=1 ] [align=left] {\begin{minipage}[lt]{33.67pt}\setlength\topsep{0pt}
$\displaystyle S( t,v)$
\end{minipage}};
\draw (150.32,178.9) node   [align=left] {\begin{minipage}[lt]{10.45pt}\setlength\topsep{0pt}
$\displaystyle v$
\end{minipage}};
\draw (318.07,94) node   [align=left] {\begin{minipage}[lt]{10.45pt}\setlength\topsep{0pt}
$\displaystyle v^{k}$
\end{minipage}};

\end{tikzpicture}
    \caption{A visualization of a spine $S(t, v)$ and of a $k$-spine $S_k(t,v)$, for $k=3$.}\label{fig:spine}
\end{figure}
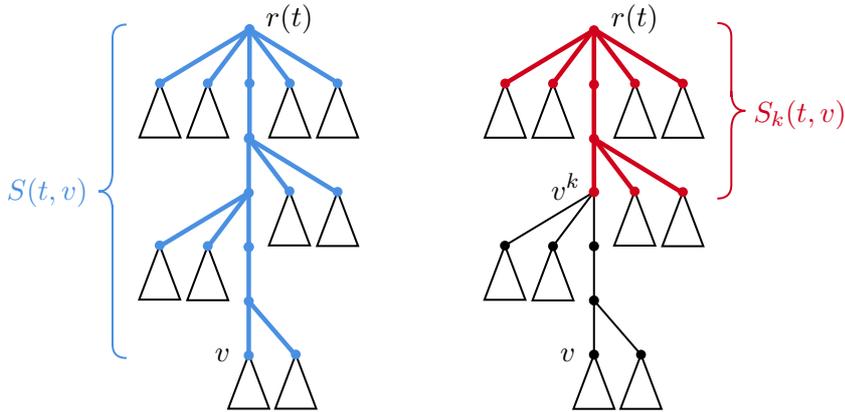

Some of the definitions of the coming paragraph are illustrated in Figure~\ref{fig:spine}. 
For nodes $x,y$ of a tree $t$, we write $x \prec y$ if $x$ is an ancestor of $y$ in $t$, and for a node $z\in v(t) \setminus \{r(t)\}$, we denote its parent by $p(z)$. 
Given a marked tree $(t,v)$, the {\em spine} $\spine(t,v)$ of $(t,v)$ is the subtree of $t$ with vertices $\{w: p(w) \prec v\} \cup \{r(t)\}$. 
For $0 \le k \le |v|$, write $v^k$ for the unique ancestor of $v$ with $|v^k|=k$; so $v^0=r(t)$ and $v^{|v|}=v$. If $k \le |v|$ then 
the {\em $k$-spine} $\spine_k(t,v)$ of $(t,v)$ is the subtree of $t$ with vertices $\{w: p(w) \prec v^k\}\cup \{r(t)\}$, and the {\em marked $k$-spine} of $(t,v)$ is the marked tree $(\spine_k(t,v) ,v^k)$.  
The \textit{spinal degree sequence} of $\spine_k(t,v)$ is $(d_t(v^0),\ldots,d_t(v^{k-1}))$. 
Given a sequence $\dseq=(d_0,\ldots,d_{k-1})$ of non-negative integers and degree statistics $\dstat=(\dstat(c),c \ge 0)$ satisfying $\sum_{c \ge 0} c\dstat(c) = \sum_{c \ge 0} \dstat(c) - 1$, write 
\[
\tset_\dstat^\bullet(\dseq) = \{(t,v) \in \tset_\dstat^\bullet: |v| \ge k, (\deg_t(v^0),\ldots,\deg_t(v^{k-1}))=\dseq\}\, .
\]

Using these definitions, we establish the following combinatorial result, whose probabilistic corollary is then used to prove Proposition~\ref{prop:sampler}. (This result is closely related to the backbone decomposition of trees given in \cite[Proposition 4 (a)]{MR3188597} in order to prove convergence of large random trees with fixed degrees to the Brownian continuum random tree after rescaling, under suitable assumptions on the degree sequences.)
\begin{prop}\label{prop:tnd_count} 
Fix degree statistics $\dstat=(\dstat(c),c \ge 0)$ with $\sum_{c \ge 0} \dstat(c)=n$ and with $\sum_{c \ge 0} c\dstat(c)=n-1$.
For any sequence $\dseq=(d_0,\ldots,d_{k-1})$ of non-negative integers, let $w(\dseq) = (w(\dseq,c), c\geq 0)$ where $w(\dseq,c) = |\{0 \le i \le k-1: d_i = c\}|$ is as in \eqref{eq:size_biasing}. If $w(\dseq,c) \le \dstat(c)$ for all $c \ge 0$, then
\[
|\tset_\dstat^\bullet(\dseq)| = \pran{\prod_{i=0}^{k-1} d_i} \cdot\left|\tset_{\dstat-w(\dseq)}^{(1)}\right| = \prod_{i=0}^{k-1} d_i \cdot {n-k \choose \dstat(c)-w(\dseq,c),c \ge 0}\, .
\]
\end{prop}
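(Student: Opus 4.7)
My plan would be to establish the first equality by a bijection; the second is then immediate from \eqref{eq:marked_forest_count}, after a short check that $\dstat - w(\dseq)$ is the degree statistics of a forest on $n-k$ vertices with $a := 1 + \sum_{i=0}^{k-1}(d_i - 1)$ trees (which falls out of counting nodes and edges).

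The bijection I would construct is
\[
\Phi : \tset_\dstat^\bullet(\dseq) \;\longrightarrow\; [d_0] \times \cdots \times [d_{k-1}] \times \tset_{\dstat - w(\dseq)}^{(1)},
\]
defined roughly as follows. Given $(t,v) \in \tset_\dstat^\bullet(\dseq)$ with spine ancestors $v^0, \ldots, v^{k-1}, v^k$, I would first record, for each $0 \le i \le k-1$, the position $\pi_i \in [d_i]$ of $v^{i+1}$ among the ordered children of $v^i$. Then I would delete the spine nodes $v^0, \ldots, v^{k-1}$ from $t$; what remains is a forest on $n-k$ nodes with exactly $a$ trees, namely the subtree rooted at $v^k$ together with one subtree rooted at each non-spine child of the $v^i$. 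I would arrange these into an ordered forest by placing the subtree rooted at $v^k$ (which contains the mark $v$) first, followed by the remaining subtrees in a fixed canonical order --- for example, listing the non-spine subtrees rooted below $v^0$ from left to right, then those below $v^1$, and so on. Since $v$ lies in the first tree by construction, the resulting marked forest belongs to $\tset_{\dstat - w(\dseq)}^{(1)}$, and I would set $\Phi(t,v)$ to be the tuple of positions paired with this marked forest.

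The inverse would rebuild $(t,v)$ from $((\pi_0, \ldots, \pi_{k-1}), (f, u))$ in the natural way: create spine nodes $v^0, \ldots, v^{k-1}$ with $v^i$ given $d_i$ ordered child slots; place the root of the first tree of $f$ at position $\pi_{k-1}$ below $v^{k-1}$ and declare that node to be $v^k$; fill the remaining non-spine slots, in the same canonical order, with the roots of the remaining trees of $f$; identify the position-$\pi_i$ child of $v^i$ with $v^{i+1}$ for $0 \le i \le k-2$; and set $v := u$. Checking that $\Phi$ and this reconstruction are mutually inverse is then routine bookkeeping, as is verifying that the degree statistics of the resulting forest are indeed $\dstat - w(\dseq)$.

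The hard part will not be any one step in isolation but rather pinning down an unambiguous canonical ordering of the non-spine subtrees, and making sure that the subtree containing $v$ always ends up as the \emph{first} tree of the forest (so that the image lands in $\tset_{\dstat - w(\dseq)}^{(1)}$ rather than in the larger space $\tset_{\dstat - w(\dseq)}^{\bullet}$). Once $\Phi$ is verified to be a bijection, the first equality of the proposition reads $|\tset_\dstat^\bullet(\dseq)| = \prod_{i=0}^{k-1} d_i \cdot |\tset_{\dstat - w(\dseq)}^{(1)}|$, and the second equality follows immediately by inserting the formula from \eqref{eq:marked_forest_count}.
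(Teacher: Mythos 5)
Your proposal is correct and follows essentially the same route as the paper: the paper also decomposes $(t,v)$ into the marked $k$-spine (contributing the factor $\prod_{i=0}^{k-1} d_i$ via the choice of which child of $v^i$ is $v^{i+1}$) together with the forest of subtrees hanging off the spine, marked in the tree rooted at $v^k$, and then invokes \eqref{eq:marked_forest_count}. The only cosmetic difference is that you make the bijection explicit and rotate the marked subtree to the first position so as to land directly in $\tset_{\dstat-w(\dseq)}^{(1)}$, whereas the paper counts marked forests with the mark in a specific (spine-determined) tree and uses that this count equals $\left|\tset_{\dstat-w(\dseq)}^{(1)}\right|$.
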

\begin{proof}
To describe an element $(t,v)$ of $\tset_\dstat^\bullet(\dseq)$, it is necessary and sufficient to specify the marked $k$-spine $(\spine_k(t,v) ,v^k)$, the subtrees of $t$ rooted at the leaves of $\spine_k(t,v)$, and the identity of the mark $v$, which must lie within the subtree of $\spine_k(t,v)$ rooted at $v^k$. 

The number of marked $k$-spines with spinal degree sequence $\dseq$ is $\prod_{i=0}^{k-1} d_i$, since to specify such a tree it is necessary and sufficient to indicate which of the $d_i$ children of $v^i$ is $v^{i+1}$ for each $0 \le i \le k-1$. The subtrees rooted at the leaves of $\spine_k(t,v)$ form a rooted forest with degree statistics $\dstat-w(\dseq)$, with a marked vertex in a specific tree; by (\ref{eq:marked_forest_count}) the number of such marked forests is 
\[
{n-k \choose \dstat(c)-w(\dseq,c),c \ge 0}.
\]
The result follows.
\end{proof}
For the next corollary we introduce the falling factorial notation $(m)_b = m(m-1)\cdot \ldots\cdot (m-b+1)$. 
\begin{cor}\label{cor:dseq_formula}
Fix degree statistics $\dstat=(\dstat(c),c \ge 0)$ with $\sum_{c \ge 0} \dstat(c)=n$ and with $\sum_{c \ge 0} c\dstat(c)=n-1$.
For any sequence $\dseq=(d_0,\ldots,d_{k-1})$ of non-negative integers with $w(\dseq, c)\le \dstat(c)$ for all $c\ge 0$, if $(T,V) \in_u \tset_\dstat^\bullet$ then we have
\[
\p{\big(\deg_t(V^0),\ldots,\deg_t(V^{k-1})\big)=\dseq,|V| \ge k}
= 
\frac{1}{(n)_k} \cdot \prod_{i=0}^{k-1} d_i\cdot \prod_{c \ge 0} (\dstat(c))_{w(\dseq,c)}\, .
\]
\end{cor}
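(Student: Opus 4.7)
The plan is to derive this corollary directly from Proposition~\ref{prop:tnd_count} by computing the ratio of the counted set $\tset_\dstat^{\bullet}(\dseq)$ to the full state space $\tset_\dstat^{\bullet}$. Since $(T,V)$ is uniform on $\tset_\dstat^{\bullet}$, the event in question has probability
\[
\p{\big(\deg_T(V^0),\ldots,\deg_T(V^{k-1})\big)=\dseq,\,|V|\ge k}
= \frac{|\tset_\dstat^{\bullet}(\dseq)|}{|\tset_\dstat^{\bullet}|},
\]
and the numerator is given explicitly by Proposition~\ref{prop:tnd_count}.

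First, I would compute $|\tset_\dstat^{\bullet}|$. Since here $\dstat$ is the degree statistics of a tree (so $a=1$ in the earlier forest notation), we have $|\tset_\dstat^{\bullet}| = n|\tset_\dstat|$, and applying \eqref{eq:dstat_count} with $a=1$ gives $|\tset_\dstat| = \frac{1}{n}\binom{n}{\dstat(c),c\ge 0}$, so
\[
|\tset_\dstat^{\bullet}| = \binom{n}{\dstat(c),c\ge 0} = \frac{n!}{\prod_{c\ge 0}\dstat(c)!}.
\]
(One could also note this equals $|\tset_\dstat^{(1)}|$ from \eqref{eq:marked_forest_count}, since with a single tree the distinction is vacuous.)

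Next I would substitute the formula from Proposition~\ref{prop:tnd_count},
\[
|\tset_\dstat^{\bullet}(\dseq)| = \prod_{i=0}^{k-1} d_i \cdot \frac{(n-k)!}{\prod_{c\ge 0}(\dstat(c)-w(\dseq,c))!},
\]
and form the ratio. The factorials combine as
\[
\frac{(n-k)!}{n!}\cdot \prod_{c \ge 0}\frac{\dstat(c)!}{(\dstat(c)-w(\dseq,c))!} = \frac{1}{(n)_k}\cdot \prod_{c \ge 0}(\dstat(c))_{w(\dseq,c)},
\]
using the falling factorial identities $n!/(n-k)! = (n)_k$ and $\dstat(c)!/(\dstat(c)-w(\dseq,c))! = (\dstat(c))_{w(\dseq,c)}$. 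Multiplying by the surviving factor $\prod_{i=0}^{k-1} d_i$ yields exactly the claimed identity.

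There is essentially no obstacle here: the corollary is a purely arithmetic consequence of the enumeration in Proposition~\ref{prop:tnd_count}, with the only mild care needed being the bookkeeping that $|\tset_\dstat^{\bullet}|$ has the clean multinomial form once one uses that $a=1$ for a tree (as opposed to a forest).
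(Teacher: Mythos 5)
Your proposal is correct and follows exactly the paper's route: the paper likewise observes that the probability equals $|\tset_\dstat^\bullet(\dseq)|/|\tset_\dstat^\bullet|$ and invokes Proposition~\ref{prop:tnd_count}, leaving the factorial bookkeeping implicit. Your explicit computation of $|\tset_\dstat^{\bullet}|=\binom{n}{\dstat(c),c\ge 0}$ via \eqref{eq:dstat_count} with $a=1$ and the simplification to falling factorials are both accurate.
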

\begin{proof}
Since $(T,V) \in_u \tset_\dstat^\bullet$, this probability is simply $|\tset_\dstat^\bullet(\dseq)|/|\tset_\dstat^\bullet|$, and the result follows from the formula for $|\tset_\dstat^\bullet(\dseq)|$ given in  Proposition~\ref{prop:tnd_count}. 
\end{proof}

\begin{proof}[Proof of Proposition~\ref{prop:sampler}]
Write $n=\sum_{c \ge 0}\dstat(c)$. 
It suffices to show that for any sequence of non-negative integers $\dseq=(d_1,\ldots,d_{k})$ with $w(\dseq, c)\le \dstat(c)$ for all $c\ge 0$, 
\begin{equation}\label{eq:sampler_toprove}
\p{(D_1,\ldots,D_k)=\dseq,M \ge k+1}
=\frac{1}{(n)_k}\cdot\prod_{i=1}^{k} d_i \cdot \prod_{c \ge 0} (\dstat(c))_{w(\dseq,c)}\, ,
\end{equation}
since then by summing over $\dseq$ in this equation and in Corollary~\ref{cor:dseq_formula} (note the shift by $1$ of the indices) it follows that $\p{|V| \ge k} = \p{M \ge k+1}$, which establishes the distributional identity. 

By the defining equation (\ref{eq:size_biasing}) for the size-biased sequence $(D_1,\ldots,D_n)$ and since $|\dstat|_1=n-1$, we have 
\begin{align}
\p{(D_1,\ldots,D_k)=\dseq}
& = 
\prod_{i=1}^k \probC{D_i=d_i}{(D_1,\ldots,D_{i-1})=(d_1,\ldots,d_{i-1})}\nonumber\\
& = \prod_{i=1}^k
\frac{d_{i}(\dstat(d_i)-w((d_1,\ldots,d_{i-1}),d_i))}{n-1-d_1-\ldots-d_{i-1}}.
\label{eq:d1todk_ident}
\end{align}
On the event that $(D_1,\ldots,D_{k})=(d_1,\ldots,d_k)$, we have $M \ge k+1$ if and only if 
\[
U_i > \frac{1+\sum_{j=1}^{i-1} (d_{j}-1)}{n+1-i}
\]
for each $1 \le i \le k$. 
Since 
\[
n+1-i-\Big(1+\sum_{j=1}^{i-1} (d_j-1)\Big)
= n-1-d_1-\ldots-d_{i-1}\, ,
\]
it follows that 
\begin{align*}
\probC{M \ge k+1}{(D_1,\ldots,D_k)=\dseq}
& = \prod_{i=1}^{k} \p{U_i > \frac{1+\sum_{j=1}^{i-1} (d_{j}-1)}{n+1-i}}
\\
& = \prod_{i=1}^{k} \frac{n-1-d_1-\ldots-d_{i-1}}{n+1-i}\, ,
\end{align*}
so by (\ref{eq:d1todk_ident}),
\begin{align*}
 \p{(D_1,\ldots,D_k)=\dseq,M \ge k+1}
 & = 
\prod_{i=1}^k 
\frac{d_{i}(\dstat(d_i)-w((d_1,\ldots,d_{i-1}), d_i)}{n+1-i} \\
& = \frac{1}{(n)_k} \prod_{i=1}^k d_i (\dstat(d_i)-w((d_1,\ldots,d_{i-1}), d_i). 
\end{align*}
Equation (\ref{eq:sampler_toprove}) follows since
\[
\prod_{i=1}^k (\dstat(d_i)-w((d_1,\ldots,d_{i-1}),d_i)) = \prod_{c \ge 0} (\dstat(c))_{w(\dseq,c)}\, . \qedhere
\]
\end{proof}

\section{\bf Proof of Theorem~\ref{thm:main_mod}} 

Fix degree statistics $\dstat=(\dstat(c),c \ge 0)$ with $\sum_{c \ge 0} \dstat(c) = n$ and $|\dstat|_1=\sum_{c \ge 0} c\dstat(c)=n-1$. 
To prove the theorem, we construct a size-biasing of $\dstat$ using a Poissonization trick similar to one introduced in \cite{MR1741774} in the context of inhomogeneous continuum random trees. Several of the definitions of the next two paragraphs are illustrated in Figure~\ref{fig:poisson}.

Let $(d_1,\ldots,d_n)$ be such that 
$d_i = c$ if and only if $\sum_{b=0}^{c-1} \dstat(b) < i \le \sum_{b=0}^c \dstat(b)$, so that $(d_1,\ldots,d_n)$ contains $\dstat(c)$ entries with value $c$ for each $c \ge 0$. Let $l_1=0$ and for $1 \le i \le n$, let 
$l_{i+1} = l_i + d_i/(n-1)$, and define $I_i = [l_i,l_{i+1})$. The intervals $I_1,\ldots,I_n$ are disjoint and partition $[0,1)$. 

Next let $\bN$ be a homogeneous Poisson process on $[0,\infty) \times[0,1)$, with atoms $((S_\ell,U_\ell),\ell \ge 1)$ listed in increasing order of arrival time (so $0 < S_1 < S_2 < \ldots$). Then $(S_\ell,\ell \ge 1)$ is a rate-one Poisson process on $[0,\infty)$, and the random variables $(U_\ell,\ell \ge 1)$ are independent Uniform$[0,1]$, independent of $(S_\ell,\ell \ge 1)$. 

For each $\ell \ge 1$, let $J(\ell)$ be the index of the interval containing the point $U_\ell$, so $U_\ell \in I_{J(\ell)}$. Let $M(1) = 1$, and for $\ell \ge 1$ let 
\[
M(\ell+1) = \inf\{k> M(\ell): U_k \not\in I_{J(M(1))}\cup\ldots\cup I_{J(M(\ell))}\}\, ,
\]
so $i \in \{M(\ell),\ell \ge 1\}$ precisely if $U_i \not\in I_1\cup\ldots \cup I_{i-1}$. 
Then for any vector $(j(1),\ldots,j(\ell))$ of distinct elements of $[n]$ and any increasing sequence of  integers $(m(1),\ldots,m(\ell))$ with $m(1)=1$, we have 
\begin{align*}
& \probC{J(m(\ell))=j(\ell),M(\ell)=m(\ell)}{J(m(k))=j(k)\mbox{ and }M(k)=m(k)\mbox{ for all }1 \le k \le \ell-1}\\
& = 
\p{U_{m(\ell)} \in I_{j(\ell)}\mbox{ and } U_j \in \bigcup_{k=1}^{\ell-1} I_{j(k)}\mbox{ for all }j \in \{m(\ell-1)+1,\ldots,m(\ell)-1\}}\\
& = 
\frac{d_{j(\ell)}}{n-1} \cdot \pran{\frac{\sum_{k=1}^{\ell-1} d_{j(k)}}{n-1}}^{m(\ell)-m(\ell-1)-1}. 
\end{align*}
Summing over all possible values for $(M(1),\ldots,M(\ell))$, it follows that if $\sum_{k=1}^{\ell-1}d_{j(k)}< n-1$ then 
\[
\probC{J(M(\ell))=j(\ell)}{J(M(k))=j(k)\mbox{ for all } 1 \le k \le \ell-1} = \frac{d_{j(\ell)}}{n-1-d_{j(1)}-\ldots-d_{j(\ell-1)}}\, .
\]
This implies that, writing $n'=n-\dstat(0)$,
the sequence $(D(1),\ldots,D(n))$ defined by 
\[
D(\ell) = \begin{cases}
		d_{J(M(\ell))}	& \mbox{ if }1 \le \ell \le n'\\
		0		& \mbox{ otherwise}
		\end{cases}
\]
is a size-biasing of $\dstat$, since 
\begin{align*}
	& \p{D(l) = d\mid (D(1),...,D(l-1)) = (d_1,...,d_{l-1})} \\
	& = \probC{d_{J(M(l))} = d}{(D(1),...,D(l-1)) = (d_1,...,d_{l-1})}\\
	& = \sum_{i\in [n]} \probC{J(M(l))= i,\, d_i = d }{(D(1),...,D(l-1)) = (d_1,...,d_{l-1})}\\
	& = \frac{d(\dstat(d) - w((d_1,...,d_{l-1}),d)}{n - 1 - d_1 - ... - d_{l-1}},
\end{align*}
where we recall that $w((d_1,...,d_{l-1}),d) = |\{i\in [l-1]: d_i = d\}|$. 

When parsing the definitions of the coming paragraph, Figure~\ref{fig:poisson} will again be useful. For each $1 \le i \le n$, let $r_i = l_i + \max(0,(d_i-1)/(n-1)) \le l_{i+1}$. For $\ell \ge 1$ let $C_\ell = \bigcup_{k=1}^\ell [r_{J(k)},l_{J(k)+1})$, so for each interval $I_i$ which contains at least one point from $U_1,\ldots,U_\ell$, the region $C_\ell$ contains a sub-interval of $I_i$ of length $1/(n-1)$.
Let
\[
\tau = \min\pran{\ell\ge 1: U_\ell \in \bigcup_{k=1}^{\ell-1} I_{J(k)}\setminus C_{\ell-1}} = 
\min\pran{\ell\ge 1: U_\ell \in \bigcup_{k=1}^{\ell-1} [l_{J(k)},r_{J(k)})}\, . 
\]

By the definition of the indices $(M(\ell),\ell \ge 1)$, we have $\tau \not \in \{M(\ell),\ell \ge 1)\}$, since the points $(U_{M(\ell)},\ell \ge 1)$ are precisely those  which, on their arrival, land in a previously empty interval, whereas $U_\tau$ falls in a subinterval of an interval which already contains one of $U_1,\ldots,U_{\tau-1}$. 

\begin{figure}[hbt]
    \centering

 
\tikzset{
pattern size/.store in=\mcSize, 
pattern size = 5pt,
pattern thickness/.store in=\mcThickness, 
pattern thickness = 0.3pt,
pattern radius/.store in=\mcRadius, 
pattern radius = 1pt}
\makeatletter
\pgfutil@ifundefined{pgf@pattern@name@_jy889d05m}{
\pgfdeclarepatternformonly[\mcThickness,\mcSize]{_jy889d05m}
{\pgfqpoint{0pt}{0pt}}
{\pgfpoint{\mcSize+\mcThickness}{\mcSize+\mcThickness}}
{\pgfpoint{\mcSize}{\mcSize}}
{
\pgfsetcolor{\tikz@pattern@color}
\pgfsetlinewidth{\mcThickness}
\pgfpathmoveto{\pgfqpoint{0pt}{0pt}}
\pgfpathlineto{\pgfpoint{\mcSize+\mcThickness}{\mcSize+\mcThickness}}
\pgfusepath{stroke}
}}
\makeatother

 
\tikzset{
pattern size/.store in=\mcSize, 
pattern size = 5pt,
pattern thickness/.store in=\mcThickness, 
pattern thickness = 0.3pt,
pattern radius/.store in=\mcRadius, 
pattern radius = 1pt}
\makeatletter
\pgfutil@ifundefined{pgf@pattern@name@_perv8pg8o}{
\pgfdeclarepatternformonly[\mcThickness,\mcSize]{_perv8pg8o}
{\pgfqpoint{0pt}{0pt}}
{\pgfpoint{\mcSize+\mcThickness}{\mcSize+\mcThickness}}
{\pgfpoint{\mcSize}{\mcSize}}
{
\pgfsetcolor{\tikz@pattern@color}
\pgfsetlinewidth{\mcThickness}
\pgfpathmoveto{\pgfqpoint{0pt}{0pt}}
\pgfpathlineto{\pgfpoint{\mcSize+\mcThickness}{\mcSize+\mcThickness}}
\pgfusepath{stroke}
}}
\makeatother

 
\tikzset{
pattern size/.store in=\mcSize, 
pattern size = 5pt,
pattern thickness/.store in=\mcThickness, 
pattern thickness = 0.3pt,
pattern radius/.store in=\mcRadius, 
pattern radius = 1pt}
\makeatletter
\pgfutil@ifundefined{pgf@pattern@name@_tm4jdcxlk}{
\pgfdeclarepatternformonly[\mcThickness,\mcSize]{_tm4jdcxlk}
{\pgfqpoint{0pt}{0pt}}
{\pgfpoint{\mcSize+\mcThickness}{\mcSize+\mcThickness}}
{\pgfpoint{\mcSize}{\mcSize}}
{
\pgfsetcolor{\tikz@pattern@color}
\pgfsetlinewidth{\mcThickness}
\pgfpathmoveto{\pgfqpoint{0pt}{0pt}}
\pgfpathlineto{\pgfpoint{\mcSize+\mcThickness}{\mcSize+\mcThickness}}
\pgfusepath{stroke}
}}
\makeatother

 
\tikzset{
pattern size/.store in=\mcSize, 
pattern size = 5pt,
pattern thickness/.store in=\mcThickness, 
pattern thickness = 0.3pt,
pattern radius/.store in=\mcRadius, 
pattern radius = 1pt}
\makeatletter
\pgfutil@ifundefined{pgf@pattern@name@_nlv3f6ai7}{
\pgfdeclarepatternformonly[\mcThickness,\mcSize]{_nlv3f6ai7}
{\pgfqpoint{0pt}{0pt}}
{\pgfpoint{\mcSize+\mcThickness}{\mcSize+\mcThickness}}
{\pgfpoint{\mcSize}{\mcSize}}
{
\pgfsetcolor{\tikz@pattern@color}
\pgfsetlinewidth{\mcThickness}
\pgfpathmoveto{\pgfqpoint{0pt}{0pt}}
\pgfpathlineto{\pgfpoint{\mcSize+\mcThickness}{\mcSize+\mcThickness}}
\pgfusepath{stroke}
}}
\makeatother
\tikzset{every picture/.style={line width=0.75pt}} 

\begin{tikzpicture}[x=0.75pt,y=0.75pt,yscale=-1,xscale=1]

\draw  [color={rgb, 255:red, 66; green, 118; blue, 179 }  ,draw opacity=1 ][pattern=_jy889d05m,pattern size=6pt,pattern thickness=0.75pt,pattern radius=0pt, pattern color={rgb, 255:red, 74; green, 144; blue, 226}] (137.84,145.03) -- (507.83,145.03) -- (507.83,153.02) -- (137.84,153.02) -- cycle ;
\draw [color={rgb, 255:red, 0; green, 0; blue, 0 }  ,draw opacity=1 ]   (87.72,52.85) -- (508.82,52.85) ;
\draw [color={rgb, 255:red, 74; green, 144; blue, 226 }  ,draw opacity=1 ]   (92.28,182.75) -- (507.45,182.75) ;
\draw [color={rgb, 255:red, 74; green, 144; blue, 226 }  ,draw opacity=1 ]   (92.28,145.03) -- (507.83,145.03) ;
\draw [color={rgb, 255:red, 74; green, 144; blue, 226 }  ,draw opacity=1 ]   (92.28,118.1) -- (507.83,118.1) ;
\draw [color={rgb, 255:red, 74; green, 144; blue, 226 }  ,draw opacity=1 ]   (92.18,95.27) -- (507.54,95.27) ;
\draw [color={rgb, 255:red, 74; green, 144; blue, 226 }  ,draw opacity=1 ]   (92.28,74.6) -- (507.83,74.6) ;
\draw [color={rgb, 255:red, 139; green, 87; blue, 42 }  ,draw opacity=1 ] [dash pattern={on 4.5pt off 4.5pt}]  (137.2,171.17) -- (137.2,233.39) ;
\draw [color={rgb, 255:red, 139; green, 87; blue, 42 }  ,draw opacity=1 ] [dash pattern={on 4.5pt off 4.5pt}]  (181.8,131.47) -- (181.8,233.39) ;
\draw [color={rgb, 255:red, 139; green, 87; blue, 42 }  ,draw opacity=1 ] [dash pattern={on 4.5pt off 4.5pt}]  (137.2,171.17) -- (94.79,171.17) ;
\draw [color={rgb, 255:red, 139; green, 87; blue, 42 }  ,draw opacity=1 ] [dash pattern={on 4.5pt off 4.5pt}]  (181.8,131.47) -- (104.57,131.47) ;
\draw  [color={rgb, 255:red, 0; green, 0; blue, 0 }  ,draw opacity=1 ][fill={rgb, 255:red, 0; green, 0; blue, 0 }  ,fill opacity=1 ] (134.48,171.17) .. controls (134.48,169.67) and (135.7,168.45) .. (137.2,168.45) .. controls (138.71,168.45) and (139.92,169.67) .. (139.92,171.17) .. controls (139.92,172.67) and (138.71,173.89) .. (137.2,173.89) .. controls (135.7,173.89) and (134.48,172.67) .. (134.48,171.17) -- cycle ;
\draw  [color={rgb, 255:red, 0; green, 0; blue, 0 }  ,draw opacity=1 ][fill={rgb, 255:red, 0; green, 0; blue, 0 }  ,fill opacity=1 ] (179.08,131.47) .. controls (179.08,129.97) and (180.3,128.76) .. (181.8,128.76) .. controls (183.3,128.76) and (184.52,129.97) .. (184.52,131.47) .. controls (184.52,132.98) and (183.3,134.19) .. (181.8,134.19) .. controls (180.3,134.19) and (179.08,132.98) .. (179.08,131.47) -- cycle ;
\draw  [color={rgb, 255:red, 0; green, 0; blue, 0 }  ,draw opacity=1 ][fill={rgb, 255:red, 0; green, 0; blue, 0 }  ,fill opacity=1 ] (266.09,204.89) .. controls (266.09,203.39) and (267.31,202.17) .. (268.81,202.17) .. controls (270.31,202.17) and (271.53,203.39) .. (271.53,204.89) .. controls (271.53,206.39) and (270.31,207.61) .. (268.81,207.61) .. controls (267.31,207.61) and (266.09,206.39) .. (266.09,204.89) -- cycle ;
\draw  [color={rgb, 255:red, 0; green, 0; blue, 0 }  ,draw opacity=1 ][fill={rgb, 255:red, 0; green, 0; blue, 0 }  ,fill opacity=1 ] (309.59,66.81) .. controls (309.59,65.3) and (310.81,64.09) .. (312.31,64.09) .. controls (313.81,64.09) and (315.03,65.3) .. (315.03,66.81) .. controls (315.03,68.31) and (313.81,69.52) .. (312.31,69.52) .. controls (310.81,69.52) and (309.59,68.31) .. (309.59,66.81) -- cycle ;
\draw  [color={rgb, 255:red, 0; green, 0; blue, 0 }  ,draw opacity=1 ][fill={rgb, 255:red, 0; green, 0; blue, 0 }  ,fill opacity=1 ] (397.15,109.22) .. controls (397.15,107.72) and (398.37,106.5) .. (399.87,106.5) .. controls (401.37,106.5) and (402.59,107.72) .. (402.59,109.22) .. controls (402.59,110.72) and (401.37,111.94) .. (399.87,111.94) .. controls (398.37,111.94) and (397.15,110.72) .. (397.15,109.22) -- cycle ;
\draw  [color={rgb, 255:red, 0; green, 0; blue, 0 }  ,draw opacity=1 ][fill={rgb, 255:red, 0; green, 0; blue, 0 }  ,fill opacity=1 ] (435.76,109.18) .. controls (435.76,107.68) and (436.98,106.46) .. (438.48,106.46) .. controls (439.98,106.46) and (441.2,107.68) .. (441.2,109.18) .. controls (441.2,110.68) and (439.98,111.9) .. (438.48,111.9) .. controls (436.98,111.9) and (435.76,110.68) .. (435.76,109.18) -- cycle ;
\draw [color={rgb, 255:red, 139; green, 87; blue, 42 }  ,draw opacity=1 ] [dash pattern={on 4.5pt off 4.5pt}]  (486.33,85.25) -- (486.33,232.31) ;
\draw [color={rgb, 255:red, 139; green, 87; blue, 42 }  ,draw opacity=1 ] [dash pattern={on 4.5pt off 4.5pt}]  (486.33,87.97) -- (96.42,87.97) ;
\draw  [color={rgb, 255:red, 0; green, 0; blue, 0 }  ,draw opacity=1 ][fill={rgb, 255:red, 0; green, 0; blue, 0 }  ,fill opacity=1 ] (483.61,87.97) .. controls (483.61,86.47) and (484.83,85.25) .. (486.33,85.25) .. controls (487.84,85.25) and (489.05,86.47) .. (489.05,87.97) .. controls (489.05,89.47) and (487.84,90.69) .. (486.33,90.69) .. controls (484.83,90.69) and (483.61,89.47) .. (483.61,87.97) -- cycle ;
\draw [color={rgb, 255:red, 139; green, 87; blue, 42 }  ,draw opacity=1 ]   (268.81,221.42) -- (268.81,233.39) ;
\draw [color={rgb, 255:red, 139; green, 87; blue, 42 }  ,draw opacity=1 ]   (181.8,221.43) -- (181.8,233.39) ;
\draw [color={rgb, 255:red, 139; green, 87; blue, 42 }  ,draw opacity=1 ]   (137.2,221.43) -- (137.2,233.39) ;
\draw [color={rgb, 255:red, 139; green, 87; blue, 42 }  ,draw opacity=1 ]   (486.33,221.43) -- (486.33,233.39) ;
\draw [color={rgb, 255:red, 139; green, 87; blue, 42 }  ,draw opacity=1 ]   (104.03,171.17) -- (92.61,171.17) ;
\draw [color={rgb, 255:red, 139; green, 87; blue, 42 }  ,draw opacity=1 ]   (103.49,131.47) -- (92.07,131.47) ;
\draw [color={rgb, 255:red, 139; green, 87; blue, 42 }  ,draw opacity=1 ]   (103.49,87.97) -- (92.07,87.97) ;
\draw  [color={rgb, 255:red, 66; green, 118; blue, 179 }  ,draw opacity=1 ][fill={rgb, 255:red, 74; green, 144; blue, 226 }  ,fill opacity=0.16 ] (181.8,118.1) -- (507.83,118.1) -- (507.83,126.08) -- (181.8,126.08) -- cycle ;
\draw    (98.59,38.71) -- (98.59,238.83) ;
\draw    (87.72,227.19) -- (520.77,227.19) ;
\draw [shift={(522.77,227.19)}, rotate = 180] [color={rgb, 255:red, 0; green, 0; blue, 0 }  ][line width=0.75]    (10.93,-4.9) .. controls (6.95,-2.3) and (3.31,-0.67) .. (0,0) .. controls (3.31,0.67) and (6.95,2.3) .. (10.93,4.9)   ;
\draw  [color={rgb, 255:red, 74; green, 144; blue, 226 }  ,draw opacity=1 ][fill={rgb, 255:red, 74; green, 144; blue, 226 }  ,fill opacity=0.16 ] (399.87,95.27) -- (507.54,95.27) -- (507.54,103.25) -- (399.87,103.25) -- cycle ;
\draw  [color={rgb, 255:red, 66; green, 118; blue, 179 }  ,draw opacity=1 ][fill={rgb, 255:red, 74; green, 144; blue, 226 }  ,fill opacity=0.16 ] (312.49,52.85) -- (508.82,52.85) -- (508.82,60.83) -- (312.49,60.83) -- cycle ;
\draw  [color={rgb, 255:red, 74; green, 144; blue, 226 }  ,draw opacity=1 ][fill={rgb, 255:red, 74; green, 144; blue, 226 }  ,fill opacity=0.16 ] (486.33,74.6) -- (507.83,74.6) -- (507.83,82.59) -- (486.33,82.59) -- cycle ;
\draw  [color={rgb, 255:red, 74; green, 144; blue, 226 }  ,draw opacity=1 ][fill={rgb, 255:red, 74; green, 144; blue, 226 }  ,fill opacity=0.16 ] (137.84,145.03) -- (507.83,145.03) -- (507.83,153.02) -- (137.84,153.02) -- cycle ;
\draw  [color={rgb, 255:red, 66; green, 118; blue, 179 }  ,draw opacity=1 ][fill={rgb, 255:red, 74; green, 144; blue, 226 }  ,fill opacity=0.16 ] (269.06,182.73) -- (507.45,182.73) -- (507.45,190.71) -- (269.06,190.71) -- cycle ;
\draw  [color={rgb, 255:red, 74; green, 144; blue, 226 }  ,draw opacity=1 ][pattern=_perv8pg8o,pattern size=6pt,pattern thickness=0.75pt,pattern radius=0pt, pattern color={rgb, 255:red, 74; green, 144; blue, 226}] (181.8,118.1) -- (507.83,118.1) -- (507.83,126.08) -- (181.8,126.08) -- cycle ;
\draw  [color={rgb, 255:red, 74; green, 144; blue, 226 }  ,draw opacity=1 ][pattern=_tm4jdcxlk,pattern size=6pt,pattern thickness=0.75pt,pattern radius=0pt, pattern color={rgb, 255:red, 74; green, 144; blue, 226}] (312.49,52.85) -- (508.82,52.85) -- (508.82,60.83) -- (312.49,60.83) -- cycle ;
\draw  [color={rgb, 255:red, 74; green, 144; blue, 226 }  ,draw opacity=1 ][pattern=_nlv3f6ai7,pattern size=6pt,pattern thickness=0.75pt,pattern radius=0pt, pattern color={rgb, 255:red, 74; green, 144; blue, 226}] (269.06,182.73) -- (507.45,182.73) -- (507.45,190.71) -- (269.06,190.71) -- cycle ;
\draw [color={rgb, 255:red, 208; green, 2; blue, 27 }  ,draw opacity=1 ] [dash pattern={on 4.5pt off 4.5pt}]  (368.87,53.96) -- (368.87,163.56) -- (368.87,233.39) ;
\draw  [color={rgb, 255:red, 208; green, 2; blue, 27 }  ,draw opacity=1 ][fill={rgb, 255:red, 208; green, 2; blue, 27 }  ,fill opacity=1 ] (366.15,161.38) .. controls (366.15,159.88) and (367.37,158.67) .. (368.87,158.67) .. controls (370.37,158.67) and (371.59,159.88) .. (371.59,161.38) .. controls (371.59,162.89) and (370.37,164.1) .. (368.87,164.1) .. controls (367.37,164.1) and (366.15,162.89) .. (366.15,161.38) -- cycle ;
\draw [color={rgb, 255:red, 139; green, 87; blue, 42 }  ,draw opacity=1 ] [dash pattern={on 4.5pt off 4.5pt}]  (224.21,149.42) -- (224.21,227.04) ;
\draw [color={rgb, 255:red, 139; green, 87; blue, 42 }  ,draw opacity=1 ] [dash pattern={on 4.5pt off 4.5pt}]  (224.21,149.42) -- (100,149.42) ;
\draw  [color={rgb, 255:red, 0; green, 0; blue, 0 }  ,draw opacity=1 ][fill={rgb, 255:red, 0; green, 0; blue, 0 }  ,fill opacity=1 ] (221.5,149.42) .. controls (221.5,147.92) and (222.71,146.7) .. (224.21,146.7) .. controls (225.72,146.7) and (226.93,147.92) .. (226.93,149.42) .. controls (226.93,150.92) and (225.72,152.14) .. (224.21,152.14) .. controls (222.71,152.14) and (221.5,150.92) .. (221.5,149.42) -- cycle ;
\draw [color={rgb, 255:red, 139; green, 87; blue, 42 }  ,draw opacity=1 ]   (224.21,221.04) -- (224.21,233.01) ;
\draw [color={rgb, 255:red, 139; green, 87; blue, 42 }  ,draw opacity=1 ]   (104,149.42) -- (92.58,149.42) ;
\draw [color={rgb, 255:red, 74; green, 144; blue, 226 }  ,draw opacity=1 ]   (269.06,190.75) -- (534.33,190.75) ;

\draw (525.83,227.06) node [anchor=north west][inner sep=0.75pt]    {$t$};
\draw (66.67,215.1) node [anchor=north west][inner sep=0.75pt]  [color={rgb, 255:red, 0; green, 0; blue, 0 }  ,opacity=1 ]  {$0$};
\draw (66.67,41.08) node [anchor=north west][inner sep=0.75pt]  [color={rgb, 255:red, 0; green, 0; blue, 0 }  ,opacity=1 ]  {$1$};
\draw (125.79,232.63) node [anchor=north west][inner sep=0.75pt]  [font=\normalsize,color={rgb, 255:red, 139; green, 87; blue, 42 }  ,opacity=1 ]  {$S_{1}$};
\draw (171.3,232.63) node [anchor=north west][inner sep=0.75pt]  [font=\normalsize,color={rgb, 255:red, 139; green, 87; blue, 42 }  ,opacity=1 ]  {$S_{2}$};
\draw (53.96,161.76) node [anchor=north west][inner sep=0.75pt]  [font=\small,color={rgb, 255:red, 139; green, 87; blue, 42 }  ,opacity=1 ]  {$U_{1}$};
\draw (53.96,120.52) node [anchor=north west][inner sep=0.75pt]  [font=\small,color={rgb, 255:red, 139; green, 87; blue, 42 }  ,opacity=1 ]  {$U_{2}$};
\draw (54.1,78.01) node [anchor=north west][inner sep=0.75pt]  [font=\small,color={rgb, 255:red, 139; green, 87; blue, 42 }  ,opacity=1 ]  {$U_{N}$};
\draw (378.13,232.63) node [anchor=north west][inner sep=0.75pt]  [font=\normalsize,color={rgb, 255:red, 139; green, 87; blue, 42 }  ,opacity=1 ]  {$\ \ \ \ \ \ \ \ \cdots \ \ \ \ \ \ \ \ S_{N}$};
\draw (516.99,51.91) node [anchor=north west][inner sep=0.75pt]  [font=\normalsize,color={rgb, 255:red, 74; green, 144; blue, 226 }  ,opacity=1 ]  {$I_{6}$};
\draw (516.99,74.75) node [anchor=north west][inner sep=0.75pt]  [font=\normalsize,color={rgb, 255:red, 74; green, 144; blue, 226 }  ,opacity=1 ]  {$I_{5}$};
\draw (515.9,96.5) node [anchor=north west][inner sep=0.75pt]  [font=\normalsize,color={rgb, 255:red, 74; green, 144; blue, 226 }  ,opacity=1 ]  {$I_{4}$};
\draw (515.9,120.43) node [anchor=north west][inner sep=0.75pt]  [font=\normalsize,color={rgb, 255:red, 74; green, 144; blue, 226 }  ,opacity=1 ]  {$I_{3}$};
\draw (515.9,151.97) node [anchor=north west][inner sep=0.75pt]  [font=\normalsize,color={rgb, 255:red, 74; green, 144; blue, 226 }  ,opacity=1 ]  {$I_{2}$};
\draw (515.9,193.3) node [anchor=north west][inner sep=0.75pt]  [font=\normalsize,color={rgb, 255:red, 74; green, 144; blue, 226 }  ,opacity=1 ]  {$I_{1}$};
\draw (258.31,232.63) node [anchor=north west][inner sep=0.75pt]  [font=\normalsize,color={rgb, 255:red, 139; green, 87; blue, 42 }  ,opacity=1 ]  {$S_{4}
\ \ \ \ \ \ \cdots$};
\draw (79.31,176.64) node [anchor=north west][inner sep=0.75pt]  [font=\scriptsize,color={rgb, 255:red, 74; green, 144; blue, 226 }  ,opacity=1 ]  {$l_{2}$};
\draw (79.31,139.66) node [anchor=north west][inner sep=0.75pt]  [font=\scriptsize,color={rgb, 255:red, 74; green, 144; blue, 226 }  ,opacity=1 ]  {$l_{3}$};
\draw (79.31,112.83) node [anchor=north west][inner sep=0.75pt]  [font=\scriptsize,color={rgb, 255:red, 74; green, 144; blue, 226 }  ,opacity=1 ]  {$l_{4}$};
\draw (78.95,89.63) node [anchor=north west][inner sep=0.75pt]  [font=\scriptsize,color={rgb, 255:red, 74; green, 144; blue, 226 }  ,opacity=1 ]  {$l_{5}$};
\draw (80.04,68.96) node [anchor=north west][inner sep=0.75pt]  [font=\scriptsize,color={rgb, 255:red, 74; green, 144; blue, 226 }  ,opacity=1 ]  {$l_{6}$};
\draw (361.05,232.59) node [anchor=north west][inner sep=0.75pt]  [font=\normalsize,color={rgb, 255:red, 208; green, 2; blue, 27 }  ,opacity=1 ]  {$S_{\tau }$};
\draw (213.31,232.63) node [anchor=north west][inner sep=0.75pt]  [font=\normalsize,color={rgb, 255:red, 139; green, 87; blue, 42 }  ,opacity=1 ]  {$S_{3}$};
\draw (53.96,137.76) node [anchor=north west][inner sep=0.75pt]  [font=\small,color={rgb, 255:red, 139; green, 87; blue, 42 }  ,opacity=1 ]  {$U_{3}$};
\draw (535.98,181.64) node [anchor=north west][inner sep=0.75pt]  [font=\scriptsize,color={rgb, 255:red, 74; green, 144; blue, 226 }  ,opacity=1 ]  {$r_{1} =l_{2} -1/( n-1)$};

\end{tikzpicture}
    \caption{
    The black dots represent the atoms $((S_i,U_i),i \ge 1)$ of the Poisson process $\mathbf{N}$. 
    The union of the striped blue regions is the ``forbidden'' region up to the stopping time $S_\tau$; the projection of the striped blue regions onto the $y$-axis is $C_{\tau-1}$.}
    \label{fig:poisson}
\end{figure}
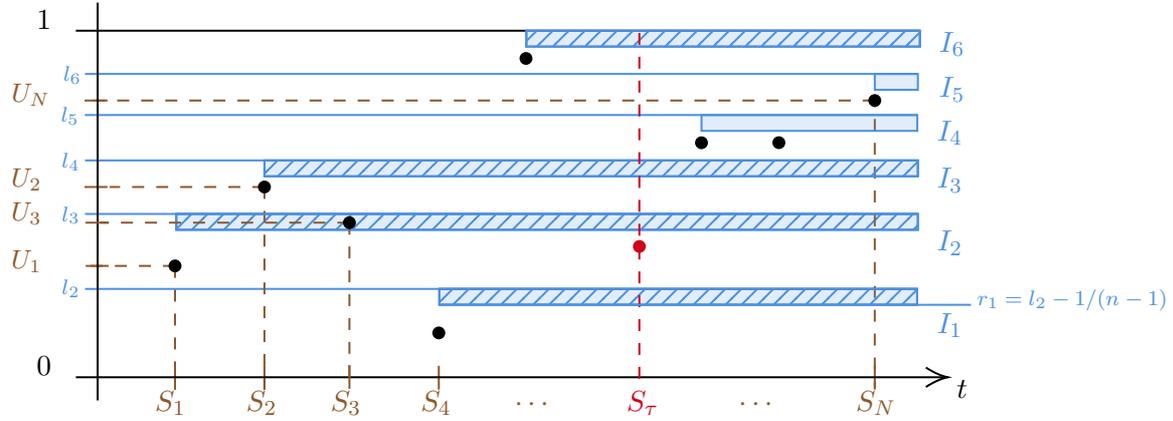

Fix any vector $(j(1),\ldots,j(\ell))$ of distinct elements of $[n]$ and any increasing sequence of  integers $(m(1),\ldots,m(\ell-1))$ with $m(1)=1$. Suppose that $\tau > M(\ell-1)$, that $M(k)=m(k)$ and that $J(m(k))=j(k)$ for all $1 \le k < \ell$. Then $\tau \le M(\ell)$ (and so $\tau < M(\ell)$) precisely if the first point among $(U_m,m > m(\ell-1))$ which does {\em not} fall into $C_{m(\ell-1)} = \bigcup_{k=1}^{\ell-1} [r_{j(k)},l_{j(k)+1})$, {\em does} belong to the set
\[
\bigcup_{k=1}^{\ell-1} I_{j(k)} \setminus C_{m(\ell-1)}
=
\bigcup_{k=1}^{\ell-1} [l_{j(k)},r_{j(k)}). 
\]
Since $r_{j(k)}-\ell_{j(k)} = (d_{j(k)}-1)/(n-1)$, writing $U$ for a Uniform$[0,1]$ random variable, it follows that 
\begin{align*}
& \probC{\tau < M(\ell)}{\tau > M(\ell-1),M(k)=m(k)\mbox{ and }J(m(k))=j(k)\mbox{ for all }1 \le k < \ell}\\
& = \probC{U \in \bigcup_{k=1}^{\ell-1} [l_{j(k)},r_{j(k)})}{U \not\in \bigcup_{k=1}^{\ell-1} [r_{j(k)},l_{j(k)+1})}  \\
& = 
\frac{\sum_{k=1}^{\ell-1} (d_{j(k)}-1)/(n-1)}{1-((\ell-1)/(n-1))} = \frac{\sum_{k=1}^{\ell -1}(d_{j(k)}-1)}{n-\ell}\, .
\end{align*}
Since the final expression depends on $(j(1),\ldots,j(\ell-1))$ and $(m(1),\ldots,m(\ell-1))$ only through the values $(d_{j(1)},\ldots,d_{j(\ell-1)})$, we have  
\begin{align}\label{eq:tau_condprob}
& \probC{\tau \le M(\ell)}{D(1),\ldots,D(\ell-1),\tau > M(\ell-1)} \nonumber\\
& = \probC{\tau < M(\ell)}{D(1),\ldots,D(\ell-1),\tau > M(\ell-1)} \nonumber\\
& = \frac{\sum_{k=1}^{\ell-1} (D(k)-1)}{n-\ell}. 
\end{align}
Writing $\sigma=\sup(k\ge 1: \tau > M(k))$, the preceding identity implies that $\sigma$ has the same distribution as the random variable from  Theorem~\ref{thm:main_mod} with the same name. Since $M(k) \ge k$, it follows that $\sigma \le \sup(k \ge 1: \tau > k) = \tau-1$, so for all $\ell \ge 1$, 
\begin{equation}\label{eq:sigma_tau_ident}
\p{\sigma \ge \ell} = \p{\tau > M(\ell)} \le \p{\tau > \ell}\, .
\end{equation}
The second bound of Theorem~\ref{thm:main_mod} now follows quite straightforwardly: if $\dstat(1)=0$ then $[l_i,r_{i+1}) = (d_i-1)/(n-1) \ge 1/(n-1)$ whenever $d_i \ne 0$. Therefore, for any $\ell \ge 2$, if $\tau > M(\ell-1)$ then $D(k) \ge 2$ for all $1 \le k \le \ell-1$, so by (\ref{eq:tau_condprob}), 
\[
\probC{\tau \le M(\ell)}{\tau > M(\ell-1)}  \ge \frac{\ell-1}{n-\ell}.
\]
It follows by induction that 
\[
\p{\sigma\ge \ell}=\p{\tau > M(\ell)} \le \prod_{k=1}^{\ell-1} 
\pran{1-\frac{k}{n-k}} \le e^{-(\ell-1)^2/(2(n-1))}\, ,
\]
proving the second inequality in Theorem~\ref{thm:main_mod}. 
We now turn to proving the first inequality in Theorem~\ref{thm:main_mod}; this bound is an immediate consequence of the next proposition. 
\begin{prop}\label{prop:main_mod}
Write $\varmod=\sum_{i:d_i \ge 2} d_i^2/(n-1)$. 
Then for all $\beta \ge 17^{3/2}$, 
\[
\p{\tau > \beta \pran{\frac{n-1}{\varmod}}^{1/2}}
\le 
\exp\pran{-\frac{1}{3}\pran{\frac{\beta^{2/3}(n-1)}{\varmod}}^{1/2}} + 2\exp\pran{-\frac{\beta^{2/3}}{24}}\, .
\]
\end{prop}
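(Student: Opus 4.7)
My plan is to work in the continuous-time Poisson framework set up above. Write $T := S_\tau$ for the first triggering arrival time, and set $t^* := \beta^{1/3}\pran{(n-1)/\varmod}^{1/2}$, so that $m := \beta\pran{(n-1)/\varmod}^{1/2}$ satisfies $m/t^* = \beta^{2/3}$. Since $\set{\tau > m} = \set{T > S_{m+1}} \subseteq \set{T > t^*} \cup \set{S_{m+1} \leq t^*}$, the proof reduces to bounding each piece and matching the two terms in the statement.

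For the Poisson arrival event, $\set{S_{m+1} \leq t^*}$ is just the event that a rate-one Poisson process has at least $m+1$ arrivals by time $t^*$. Since $m/t^* = \beta^{2/3} \geq 17 \gg e$ for $\beta \geq 17^{3/2}$, a standard Cram\'er/Chernoff bound $\p{\mathrm{Poi}(t^*) \geq m+1} \leq e^{-t^*}(et^*/(m+1))^{m+1}$ yields, after straightforward bookkeeping,
\[
\p{S_{m+1} \leq t^*} \leq \exp(-t^*/3) = \exp\pran{-\tfrac{1}{3}\pran{\beta^{2/3}(n-1)/\varmod}^{1/2}},
\]
the first term of the proposition.

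For the triggering event, the structural fact that drives the argument is that the opening times $\sigma_i := \inf\set{s : \bN([0,s]\times I_i) > 0}$ are mutually independent, $\sigma_i \sim \mathrm{Exp}(d_i/(n-1))$, and that conditionally on $(\sigma_i)_i$ the triggering arrivals form an inhomogeneous Poisson process of rate $|A(t)|$. Consequently
\[
\p{T > t^*} = \E{\exp(-Y)}, \qquad Y := \sum_i a_i\,(t^* - \sigma_i)_+, \quad a_i := (d_i - 1)^+/(n-1),
\]
and this Laplace transform factorises, with each factor computable in closed form:
\[
\E{\exp(-a_i(t^* - \sigma_i)_+)} = d_i e^{-(d_i - 1)x} - (d_i - 1) e^{-d_i x}, \qquad x := t^*/(n-1).
\]
Setting $\phi(d,x) := -\log\pran{d e^{-(d-1)x} - (d-1) e^{-dx}} \geq 0$, a Taylor expansion at $x=0$ gives $\phi(d,x) = \tfrac{1}{2}d(d-1)x^2 + O(x^3)$, while for $dx \to \infty$ one has $\phi(d,x) \sim (d-1)x - \log d$; a little work upgrades these to an absolute two-regime bound $\phi(d,x) \geq c\min\pran{d(d-1)x^2, (d-1)x}$ for some $c>0$. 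Summing over $i$, splitting the indices at $d \sim 1/x$, and using $d_i \leq n-1$ together with the identity $(t^*)^2\,\varmod/(n-1) = \beta^{2/3}$, one obtains one of two bounds depending on which range of degrees carries most of the weight of $\varmod$: either $-\log\E{\exp(-Y)} \gtrsim \beta^{2/3}$ (when the small-degree indices dominate), giving $\p{T > t^*} \leq 2\exp(-\beta^{2/3}/24)$ as needed for the second term of the proposition; or $-\log\E{\exp(-Y)} \gtrsim t^*$ (when the large-degree indices dominate), in which case $\p{T > t^*}$ is absorbed into the first term of the proposition alongside $\p{S_{m+1} \leq t^*}$.

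The main obstacle is the final uniform-in-$\dstat$ lower bound on $\sum_i \phi(d_i, x)$: the function $\phi$ changes qualitative behaviour near $dx = 1$, and one has to handle both the "spread-out" regime (where Bernstein-type concentration of the Poisson integral kicks in and produces the $\beta^{2/3}$ scale) and the "concentrated" regime (where one or a few large degrees drive everything and only the $t^*$-scale bound is available). Threading the various constants through this case analysis so that both pieces yield exactly the stated exponents $\tfrac{1}{3}$ and $\tfrac{1}{24}$ at the critical threshold $\beta = 17^{3/2}$ is the most delicate part of the proof.
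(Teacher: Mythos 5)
Your overall skeleton matches the paper's: Poissonize, split $\{\tau>m\}$ into a Poisson counting event and a ``no trigger by a deterministic time'' event, and handle the counting event by a Chernoff bound. Your treatment of the triggering event is genuinely different and more refined than the paper's --- you compute $\p{S_\tau>t^*}$ \emph{exactly} as a product of Laplace transforms $\prod_i\E{e^{-a_i(t^*-\sigma_i)_+}}$, whereas the paper only uses the cruder sufficient condition ``some $[l_i,r_i)$ receives two points'' to get $\p{T>t}\le\prod_{i:d_i\ge2}(1+p_it)e^{-p_it}$. That extra precision is not the problem.

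The problem is the large-degree regime, and it is a genuine gap, not just unchased constants. Your claim that when the large degrees dominate one has $-\log\E{e^{-Y}}\gtrsim t^*$, so that $\p{T>t^*}$ ``is absorbed into the first term,'' is false. Take one node of degree $d$ with $2\le d\ll n$ and all other nodes of degree $0$ or $1$. Then $\varmod=d^2/(n-1)$, $x=t^*/(n-1)=\beta^{1/3}/d$, and the product has a single nontrivial factor, giving $-\log\p{T>t^*}=\phi(d,x)\approx(d-1)x-\log(dx)\approx\beta^{1/3}-\tfrac13\log\beta$, whereas $t^*=\beta^{1/3}(n-1)/d$ can be arbitrarily larger. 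So $\p{T>t^*}\approx\beta^{1/3}e^{-\beta^{1/3}}$, which for large $\beta$ exceeds \emph{both} terms of the proposition ($e^{-t^*/3}$ is astronomically small when $d\ll n$, and $2e^{-\beta^{2/3}/24}\ll e^{-\beta^{1/3}}$ once $\beta$ is large). The underlying issue is structural: by the fixed time $t^*=m/\beta^{2/3}$ only about $t^*$ of the $m$ available points have arrived, so $\{T>t^*\}$ is a very lossy upper bound for $\{\tau>m\}$ when a single big interval must be hit twice. The paper sidesteps exactly this by abandoning the time-$t$ Poisson bound when $d_{\max}>((n-1)\varmod)^{1/2}/\beta^{1/3}$ and instead running a direct binomial computation on \emph{all} $\lfloor m\rfloor$ points landing in $[l_n,r_n)$, i.e.\ effectively using a time horizon of order $m$ rather than $t^*$ in that regime; this is what produces the $2\beta^{2/3}e^{-\beta^{2/3}/4}$ term that gets folded into $2e^{-\beta^{2/3}/24}$. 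To repair your argument you would need an analogous case split with a longer (degree-dependent) time cutoff in the concentrated regime; as written, the single choice of $t^*$ cannot yield the stated bound.
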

The first bound of Theorem~\ref{thm:main_mod} follows from the proposition since $\sum_{i:d_i \ge 2} d_i^2 = |\dstat|_2^2 - \dstat(1)$ and $|\dstat|_1=(n-1)$, so $\tfrac{n-1}{\varmod} = \tfrac{|\dstat|_1^2}{|\dstat|_2^2 - \dstat(1)}$. 

The proposition's proof is where the Poisson process setup comes into its own. Write $\bN(t) = \bN([0,t]\times[0,1])$ for the number of points of $\bN$ arriving by time $t$, and let $\bN_i(t) = \bN([0,t]\times[l_i,r_i))$ be the number of points arriving in the interval $[l_i,r_i)$ by time $t$. 
Note that for all $i \in [n]$, if $\bN_i(t) \ge 2$ then $\tau \le \bN(t)$. Thus, letting $T =\inf\{t \ge 0: \max_{i \in [n]}\bN_i(t) \ge 2\}$, we have $\tau \le \bN(T)$. It follows that for all $h \in \N$, if $\bN(t) \le h$ and $T \le t$ then $\tau \le h$, so 
 \begin{equation}\label{eq:generic_poissonization_bound}
 \p{\tau > h} \le \inf_{t \ge 0} \Big(\p{\bN(t) > h} + \p{T > t}\Big)\, .
 \end{equation}
We control the first of these probabilities using standard Poisson tail estimates. The second requires a little more work. The random variables $(\bN_i(t),i \in [n])$ are independent and $\bN_i(t)$ is Poisson$(t(r_i-l_i))$-distributed. 
Note that $r_i-l_i=0$ when $d_i \le 1$. 
Writing $p_i = d_i/(2(n-1))$, we have $r_i-l_i \ge p_i$ whenever $d_i \ge 2$, so
\begin{equation}\label{eq:poisson_upper}
\p{T>t} 
= \prod_{i:d_i \ge 2} \p{\bN_i(t) \le 1} 
\le\prod_{i:d_i \ge 2} \p{\mathrm{Poisson}(p_it)\le 1} 
 = \prod_{i: d_i \ge 2} (1+p_it)e^{-p_it}\, .
\end{equation}
Combining this with (\ref{eq:generic_poissonization_bound}) and the tail bound 
$\p{\mathrm{Poisson}(t)> h} 
=e^{-t((h/t)\log(h/t) - h/t + 1)}$, which holds for $h \ge t$ and can be found in, e.g., \cite[Page 23]{MR3185193}, we obtain that 
\begin{equation}\label{eq:poissonization_bd}
\p{\tau > h}
\le \inf_{t \le h}
\pran{
e^{-t((h/t)\log(h/t)- h/t + 1)} + \prod_{i: d_i \ge 2} (1+p_it)e^{-p_it}\, 
}.
\end{equation}
We next focus on proving bounds for the second term on the right-hand side of (\ref{eq:poissonization_bd}); our approach is based on that of Lemma~9 in \cite{MR1741774}. 
\begin{lem}\label{lem:gtd_bd}
Write $\bd=(d_1,\ldots,d_n)$ and 
let $g(t,\bd) = \prod_{i: d_i \ge 2} (1+p_it)e^{-p_i t}$, where $p_i = d_i/(2(n-1))$. Also write $p_{\max} = \max_{i \in [n]} p_i$ and $d_{\max}=\max_{i \in [n]} d_i$. Then 
for all $0 \le t < 1/p_{\max}=2(n-1)/d_{\max}$, 
\[
\log g(t,\bd) = \sum_{k \ge 2} \frac{(-1)^{k+1}}{k}\sum_{i: d_i \ge 2}\pran{\frac{d_it}{2(n-1)}}^k\, ,
\]
and with $\varmod=\sum_{i:d_i \ge 2} d_i^2/(n-1)$, we have 
\[
\left|
\log g(t,\bd)+\frac{\varmod t^2}{8(n-1)}
\right|
\le \frac{d_{\max} t}{6(n-1)-3d_{\max}t}\cdot \frac{\varmod t^2}{4(n-1)} \, .
\]
\end{lem}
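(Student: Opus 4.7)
The proof is a straightforward Taylor expansion of the logarithm, followed by a geometric-series bound on the remainder; the hypothesis $t<1/p_{\max}$ guarantees all the convergence we need.

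First I would write
\[
\log g(t,\bd) = \sum_{i: d_i\ge 2}\bigl(\log(1+p_it) - p_it\bigr)
\]
and apply the standard expansion $\log(1+x)=\sum_{k\ge 1}(-1)^{k+1}x^k/k$, which is absolutely convergent for $|x|<1$. Under the hypothesis $t<1/p_{\max}$ we have $p_it<1$ for every $i$, so
\[
\log(1+p_it)-p_it \;=\; \sum_{k\ge 2}\frac{(-1)^{k+1}}{k}(p_it)^k,
\]
and absolute convergence lets me interchange the sum over $i$ and the sum over $k$. Substituting $p_i=d_i/(2(n-1))$ yields the claimed series identity for $\log g(t,\bd)$.

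Next I would isolate the $k=2$ term. By the definition of $\varmod$,
\[
\frac{(-1)^{3}}{2}\sum_{i:d_i\ge 2}\Bigl(\frac{d_it}{2(n-1)}\Bigr)^{2}
= -\frac{t^{2}}{8(n-1)^{2}}\sum_{i:d_i\ge 2}d_i^{2}
= -\frac{\varmod\, t^{2}}{8(n-1)}.
\]
So the quantity to bound is
\[
\log g(t,\bd)+\frac{\varmod t^{2}}{8(n-1)} \;=\; \sum_{k\ge 3}\frac{(-1)^{k+1}}{k}\sum_{i:d_i\ge 2}\Bigl(\frac{d_it}{2(n-1)}\Bigr)^{k}.
\]

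For the bound, the key trick is to factor out two powers of $d_it/(2(n-1))$ and bound the remaining $k-2$ powers using $d_i\le d_{\max}$. Writing $x:=d_{\max}t/(2(n-1))$ and noting $x<1$, I get
\[
\sum_{i:d_i\ge 2}\Bigl(\frac{d_it}{2(n-1)}\Bigr)^{k}
\;\le\; x^{k-2}\sum_{i:d_i\ge 2}\Bigl(\frac{d_it}{2(n-1)}\Bigr)^{2}
\;=\; x^{k-2}\cdot\frac{\varmod t^{2}}{4(n-1)}.
\]
Combining this with the trivial bound $1/k\le 1/3$ for $k\ge 3$ and summing the resulting geometric series,
\[
\Bigl|\log g(t,\bd)+\frac{\varmod t^{2}}{8(n-1)}\Bigr|
\;\le\;\frac{\varmod t^{2}}{4(n-1)}\cdot\frac{1}{3}\sum_{k\ge 3}x^{k-2}
\;=\;\frac{\varmod t^{2}}{4(n-1)}\cdot\frac{x}{3(1-x)}.
\]
Finally, substituting $x=d_{\max}t/(2(n-1))$ gives $x/(1-x)=d_{\max}t/(2(n-1)-d_{\max}t)$, and multiplying numerator and denominator by $2$ yields the stated form $d_{\max}t/(6(n-1)-3d_{\max}t)$.

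There is no real obstacle here; the only thing to check carefully is that the bound $1/k\le 1/3$ is applied only after factoring out the $k=2$ contribution (which gives the correct normalization $\varmod t^{2}/(4(n-1))$ in the remainder estimate, matching the target), and that the interchange of the $i$- and $k$-sums is legitimate, which follows from absolute convergence under $t<1/p_{\max}$.
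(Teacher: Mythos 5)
Your proof is correct and follows essentially the same route as the paper's: the same termwise Taylor expansion with the interchange of sums justified by absolute convergence, the same factoring of $p_{\max}^{k-2}$ out of the $k\ge 3$ tail, and the same $1/k\le 1/3$ plus geometric-series bound. No issues.
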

\begin{proof}
First, note that 
\[
\sum_{k \ge 2} \frac{1}{k} \sum_{i:d_i \ge 2} (p_i t)^k < \infty
\]
for $0 \le t < 1/p_{\max}$, since the inner sum has finitely many summands, each of which decreases geometrically in $k$. By a Taylor expansion of $\log(1+x)$ around $x=0$ and Tonelli's theorem, it follows that  
\begin{align*}
\log g(t,\bd) 
& = \sum_{i: d_i \ge 2} (\log(1+p_i t)-p_i t)\\
& = \sum_{i: d_i \ge 2} \sum_{k\ge 1} \frac{(-1)^{k+1}}{k} (p_it)^k - 
\sum_{i: d_i \ge 2}p_i t\\
& = \sum_{k \ge 2} \frac{(-1)^{k+1}}{k}\sum_{i: d_i \ge 2} (p_it)^k\, .
\end{align*}
Next, note that 
\[
\sum_{i: d_i \ge 2} p_i^k \le p_{\max}^{k-2}\sum_{i:d_i \ge 2} p_i^2,
\]
so 
\[
\sum_{k \ge 3}\sum_{i: d_i \ge 2} \frac{(p_it)^k}{k} 
\le \pran{\sum_{i:d_i \ge 2} (p_it)^2}
\sum_{k \ge 3} \frac{(p_{\max}t)^{k-2}}{k}
\le \pran{\sum_{i:d_i \ge 2} (p_it)^2}
\cdot \frac{p_{\max} t}{3(1-p_{\max}t)}\, 
\]
and thus 
\[
\left|\log g(t,\bd) + \sum_{i:d_i \ge 2} \frac{(p_it)^2}{2}\right| \le 
\pran{\sum_{i:d_i \ge 2} (p_it)^2}
\cdot \frac{p_{\max} t}{3(1-p_{\max}t)}\, .
\]
Using that $p_i = d_i/(2(n-1))$ and $p_{\max}=d_{\max}/(2(n-1))$ and $\varmod=\sum_{i:d_i \ge 2} d_i^2/(n-1)$, this is precisely the bound claimed in the lemma; this completes the proof. 
\end{proof}
\begin{cor}\label{cor:gtd_bd}
Write $\varmod=\sum_{i:d_i \ge 2} d_i^2/(n-1)$. Then for all $0 \le t \le (n-1)/d_{\max}$, 
\[
g(t,\bd) \le \exp\pran{\frac{-\varmod t^2}{24(n-1)}}.
\]
\end{cor}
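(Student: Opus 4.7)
The plan is to feed the range $0 \le t \le (n-1)/d_{\max}$ directly into the quantitative bound already established in Lemma~\ref{lem:gtd_bd}. Note first that $(n-1)/d_{\max} < 2(n-1)/d_{\max} = 1/p_{\max}$, so the hypothesis of the lemma is satisfied and the expansion for $\log g(t,\bd)$ is valid on the entire range of interest.

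From the upper half of the two-sided bound in Lemma~\ref{lem:gtd_bd},
\[
\log g(t,\bd) \le -\frac{\varmod t^2}{8(n-1)} + \frac{d_{\max}\, t}{6(n-1) - 3d_{\max}\, t}\cdot\frac{\varmod t^2}{4(n-1)}.
\]
I would then reparametrize by setting $x = d_{\max} t/(n-1)$, so that $x \in [0,1]$ under the standing assumption on $t$, and the error coefficient takes the form $x/(6-3x)$. This function is nondecreasing on $[0,1]$ and its maximum on that interval is attained at $x=1$, giving $x/(6-3x) \le 1/3$. In particular $6(n-1) - 3 d_{\max} t \ge 3(n-1) > 0$, so no degeneracy occurs.

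Substituting this bound back yields
\[
\log g(t,\bd) \le \varmod t^2\left(-\frac{1}{8(n-1)} + \frac{1}{3}\cdot\frac{1}{4(n-1)}\right) = \varmod t^2\left(-\frac{1}{8(n-1)} + \frac{1}{12(n-1)}\right),
\]
and since $-\tfrac{1}{8} + \tfrac{1}{12} = -\tfrac{1}{24}$, this simplifies to $\log g(t,\bd) \le -\varmod t^2/(24(n-1))$. Exponentiating gives the corollary.

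The main obstacle, honestly, is essentially nil: all the analytic work lives in Lemma~\ref{lem:gtd_bd}, and the corollary is a one-line arithmetic consolidation. The only thing worth double-checking is that $1/3$ is indeed the right worst-case value for the error coefficient on the chosen range, and that $-1/8 + (1/3)(1/4)$ lands comfortably negative — both of which are verified above.
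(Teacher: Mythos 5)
Your proof is correct and follows essentially the same route as the paper's: bound the error coefficient $d_{\max}t/(6(n-1)-3d_{\max}t)$ by $1/3$ on the range $t \le (n-1)/d_{\max}$, then compute $-\tfrac{1}{8}+\tfrac{1}{12}=-\tfrac{1}{24}$. The only (harmless) addition is your explicit check that $t < 1/p_{\max}$, which the paper leaves implicit.
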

\begin{proof}
For $t \le (n-1)/d_{\max}$ we have 
\[
\frac{d_{\max} t}{6(n-1)-3d_{\max}t}
\cdot 
\frac{\varmod t^2}{4(n-1)}
\le \frac{\varmod t^2}{12(n-1)},
\]
and the bound in the lemma then gives 
\[
\log g(t,\bd) 
\le
-\frac{\varmod t^2}{8(n-1)}
+ \frac{\varmod t^2}{12(n-1)} 
=-\frac{\varmod t^2}{24(n-1)}\, .\qedhere
\]
\end{proof}
\begin{proof}[Proof of Proposition~\ref{prop:main_mod}]
First, if $d_{\max}=1$ then $\varmod=0$ and the lemma asserts a non-negative upper bound on $\p{\tau > \infty}$, so clearly holds. We thus assume that $d_{\max} > 1$ for the rest of the proof.

Fix $C > 2$. If $d_{\max} \le (\sum_{i:d_i \ge 2} d_i^2)^{1/2}/C=\tfrac{((n-1)\varmod)^{1/2}}{C}$, then taking $t = \tfrac{C(n-1)^{1/2}}{\varmod^{1/2}} \le \tfrac{n-1}{d_{\max}}$, by 
Lemma~\ref{lem:gtd_bd} we have
\begin{align*}
g(t,\bd) \le \exp\pran{-\frac{\varmod t^2}{24(n-1)}} = \exp\pran{-\frac{C^2}{24}}\, .
\end{align*}
Taking $h=2t=\tfrac{2C(n-1)^{1/2}}{\varmod^{1/2}}$, and noting that 
\[
t\pran{\frac{h}{t}\log\frac{h}{t}-\frac{h}{t} + 1}
= t(2\log 2 - 1)> \frac{t}{3}\, ,
\]
by (\ref{eq:poissonization_bd}) we obtain that 
\begin{align}\label{eq:dsmall_taubd}
\p{\tau > 2C \frac{(n-1)^{1/2}}{\varmod^{1/2}}}
& \le \exp\pran{-\frac{C}{3}\frac{(n-1)^{1/2}}{\varmod^{1/2}}} + \exp\pran{-\frac{C^2}{24}}\, .
\end{align}

Now suppose that $d_{\max} > (\sum_{i:d_i \ge 2} d_i^2)^{1/2}/C=\tfrac{((n-1)\varmod)^{1/2}}{C}$. By construction the entries of $(d_1,\ldots,d_n)$ are non-decreasing, so $d_n=d_{\max}$. For any positive real $K\ge 2$, if at least two of the points $U_1,\ldots,U_{\lfloor K\rfloor}$ lie in the interval $[l_n,r_n)$ then $\tau \le K$, so 
\begin{align*}
\p{\tau > K} 
&\le \p{|\{k \in [\lfloor K \rfloor]: U_k \in [l_n,r_n)\}|\le 1}\\
& = \p{\mathrm{Bin}(\lfloor K \rfloor,r_n-l_n) \in \{0,1\}}\\
& = (1-(r_n-l_n))^{\lfloor K \rfloor-1}(1-(r_n-l_n)+\lfloor K \rfloor(r_n-l_n))\, .
\end{align*}
Since $d_{\max} \ge 2$ we have $r_n-l_n = (d_{\max}-1)/(n-1) \ge d_{\max}/(2(n-1))$, so using that $1-x \le e^{-x}$ it follows that for $K \ge 2C(\tfrac{n-1}{\varmod})^{1/2}\ge 4$, 
\begin{align}
\p{\tau > K} 
& \le \pran{1+\frac{(\lfloor K \rfloor-1)d_{\max}}{n-1}}\cdot\exp\pran{-\frac{(\lfloor K \rfloor-1) d_{\max}}{2(n-1)}}\, \nonumber\\
& \le \frac{2Kd_{\max}}{n-1}\exp\pran{-\frac{Kd_{\max}}{4(n-1)}}\, ,\label{eq:tau_bd_d_big}
\end{align}
where for the second inequality we have used the lower bound on $K$ to deduce that 
$\lfloor K\rfloor -1 > K/2$ and that 
$\tfrac{(\lfloor K \rfloor-1)d_{\max}}{n-1} > \tfrac{K}{2}\tfrac{d_{\max}}{n-1} \ge 2$ and so $1 + \tfrac{(\lfloor K \rfloor-1)d_{\max}}{n-1} < \tfrac{2Kd_{\max}}{n-1}$. 

Taking $K=xC(\tfrac{n-1}{\varmod})^{1/2}$ for $x \ge 4$, the lower bound on $d_{\max}$ implies that $Kd_{\max}/(n-1) \ge x$; since $2xe^{-x/4}$ is decreasing for $x \ge 4$, the bound (\ref{eq:tau_bd_d_big}) then implies that 
\begin{align}\label{eq:dbig_taubd}
\p{\tau \ge xC\frac{(n-1)^{1/2}}{\varmod^{1/2}}}
\le 2xe^{-x/4}\, .
\end{align}

To finish the proof, we combine (\ref{eq:dsmall_taubd}) and (\ref{eq:dbig_taubd}) to get a bound which does not depend on the value of $d_{\max}$. Take $\beta \ge 17^{3/2}$, let $C=\beta^{1/3}>2$ and $x=\beta^{2/3}\ge 4$. Then $2C \le \beta$ and $xC=\beta$. Whatever the value of $d_{\max}$, one of (\ref{eq:dsmall_taubd}) and (\ref{eq:dbig_taubd}) applies, so we obtain that 
\begin{align*}
\p{\tau \ge \beta\frac{(n-1)^{1/2}}{\varmod^{1/2}}}
& = \p{\tau \ge xC\frac{(n-1)^{1/2}}{\varmod^{1/2}}}\\
& \le 
\exp\pran{-\frac{C}{3}\frac{(n-1)^{1/2}}{\varmod^{1/2}}} + \exp\pran{-\frac{C^2}{24}}
+ 2x e^{-x/4}\\
& = 
\exp\pran{-\frac{\beta^{1/3}}{3}\frac{(n-1)^{1/2}}{\varmod^{1/2}}}
+
\exp\pran{-\frac{\beta^{2/3}}{24}}
+
2\beta^{2/3}\exp\pran{-\frac{\beta^{2/3}}{4}}\, .
\end{align*}
Finally, it is straightforward to check that $e^{-y/24}+2ye^{-y/4} \le 2e^{-y/24}$ for $y \ge 17$, which combined with the previous inequality yields the first bound of the proposition.
\end{proof}

\section{\bf Proofs of the conjectures from \cite{MR2908619} and of Propositions~
\ref{prop:dstats_infinite_variance},~\ref{prop:dstats_zero_radius} and~\ref{prop:dstats_finite_variance}}
\label{sec:simply_generated}

The sort of random trees considered by Janson~\cite{MR2908619} are called {\em simply generated trees}; they are defined as follows. Fix non-negative real weights $\wts=(w_k,k \ge 0)$ with $w_0>0$. For a finite plane tree $t$, the weight of $t$ is 
\[
\wts(t) = \prod_{v \in \vset(t)} w_{\deg_t(v)}\, .
\]
For positive integers $n$ write 
\[
Z_n=Z_n(\wts) = \sum_{\mathrm{plane~trees}~t: |\vset(t)|=n} \wts(t) \, ,
\]
and when $Z_n > 0$ define a random tree $\sgt_n=\sgt_n(\wts)$ by 
\[
\p{\sgt_n=t} = \frac{\wts(t)}{Z_n}\,
\]
for plane trees $t$ with $|\vset(t)|=n$. Then $\sgt_n$ is called a {\em simply generated tree of size $n$ with weight sequence $\wts$}. 
If $\sum_{k \ge 0} w_k=1$ then 
$\cT_n(\wts)$ is distributed as a Bienaym\'e tree with offspring distribution $\wts$ conditioned to have $n$ vertices. 

Write $\Phi(z) = \Phi_{\wts}(z) = \sum_{k \ge 0} w_k z^k$ for the generating function of the sequence $\wts$, and $\rho=\rho_{\wts}$ for the radius of convergence of $\Phi$.  For $t > 0$ such that $\Phi(t) < \infty$, define 
\[
\Psi(t) = \Psi_{\wts}(t)=\frac{t\Phi'(t)}{\Phi(t)} = \frac{\sum_{k\ge 0} kw_kt^k}{\sum_{k \ge 0} w_kt^k}\, .
\]
If $\Phi(\rho)=\infty$ then also define 
\[
\Psi(\rho)=\Psi_\wts(\rho) = \lim_{t \uparrow \rho} \Psi(t)\, ;
\]
the function $\Psi$ is strictly increasing on $[0,\rho)$ by \cite[Lemma~3.1(i)]{MR2908619}, so this limit exists. In all cases, write $\nu=\nu(\wts)=\Psi_\wts(\rho)$. Note that $\Psi(t) \in (0,\infty]$ for all $t > 0$, so $\nu=0$ if and only if $\rho=0$.

The questions from \cite{MR2908619} that we address in this paper concern exclusively weight sequences with $\nu \le 1$, and we assume this is the case from now on. We define $\sigma^2 = \rho\Psi'(\rho)$; this is a slight simplification of the definition from \cite[Theorem~7.1]{MR2908619}, made possible by the assumption that $\nu \le 1$. 

The following conjecture summarizes Conjectures~21.5 and~21.6 and Problems~21.7 and~21.8 from \cite{MR2908619}.
\begin{conj}[\cite{MR2908619}]\label{conj_svante}
Let $\wts=(w_k,k \ge 0)$ be a weight sequence with $w_0>0$ and with $w_k>0$ for some $k \ge 2$, and for $n \ge 0$ with $Z_n(\wts)>0$ let $\sgt_n$ be a simply generated tree of size $n$ with weight sequence $\wts$.
\begin{enumerate}
\item If $\nu=1$ and $\sigma^2 = \infty$ then $\hyt(\sgt_n)/n^{1/2} \to 0$  in probability. 
\item If $\nu=1$ and $\sigma^2 = \infty$ then $\wid(\sgt_n)/n^{1/2} \to \infty$ in probability. 
\item If $\nu < 1$ then $\hyt(\sgt_n)/n^{1/2} \to 0$ in probability. 
\item If $\nu < 1$ then $\wid(\sgt_n)/n^{1/2} \to \infty$ in probability.
\end{enumerate}
In all four statements, the convergence is as $n \to \infty$ along integers $n$ such that $Z_n(\wts)>0$. 
\end{conj}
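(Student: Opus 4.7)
The plan is to reduce the simply generated tree $\sgt_n$ to a conditioned Bienaym\'e tree via exponential tilting of the weight sequence, apply Theorems~\ref{thm:main2} and~\ref{thm:main3}, and then use the additional \emph{condensation} structure available in the $\nu<1$ regime to remove a spurious $\log^3 n$ factor and obtain parts (3) and (4) in their sharp form. For any $\tau>0$ with $0<\Phi(\tau)<\infty$, the formula $\mu_\tau(k)=w_k\tau^k/\Phi(\tau)$ defines a probability measure on $\N$, and a direct calculation shows that for every plane tree $t$ with $|\vset(t)|=n$,
\[
\p{T(\mu_\tau)=t}=\frac{\tau^{-(n-1)}}{\Phi(\tau)^n}\wts(t)\, ,
\]
so that $\sgt_n\eqdist T_n(\mu_\tau)$, i.e., a Bienaym\'e tree with offspring $\mu_\tau$ conditioned on having size $n$.

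For parts (1) and (2), where $\nu=\Psi(\rho)=1$ and $\sigma^2=\infty$, continuity and monotonicity of $\Psi$ (see \cite[Lemma~3.1]{MR2908619}) furnish a $\tau\in(0,\rho]$ with $\Psi(\tau)=1$ and $\Phi(\tau)<\infty$. Then $\mu_\tau$ is critical with $|\mu_\tau|_1=\Psi(\tau)=1$, and a short computation gives $|\mu_\tau|_2^2 = \tau\Psi'(\tau)+1$, which equals $\sigma^2+1=\infty$; so Theorem~\ref{thm:main2} applies and yields (2) exactly, together with the slightly weakened form $\hyt(\sgt_n)/(n^{1/2}\log^3 n)\to 0$ of (1)---this is the ``nearly'' in ``nearly prove''.

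For parts (3) and (4) when $\Phi(\rho)<\infty$ (which is automatic whenever $\rho>0$ and $\nu<\infty$, since $\Phi(\rho)=\infty$ with $\rho>0$ forces $\Psi(\rho)=\infty$), take $\tau=\rho$; then $|\mu_\rho|_1=\nu<1$ and the choice of $\tau$ at the radius of convergence of $\Phi$ forces
\[
\sum_{c\ge 0}e^{tc}\mu_\rho(c)=\frac{\Phi(\rho e^t)}{\Phi(\rho)}=\infty \quad\text{for every }t>0\, ,
\]
so Theorem~\ref{thm:main3} applies and yields (4) together with a $\log^3 n$-weakened form of (3). To obtain (3) in sharp form, invoke Janson's condensation theorem \cite[Chapter~19]{MR2908619}: when $\nu<1$, with very high probability $\sgt_n$ contains a single vertex of degree $(1-\nu)n+o(n)$. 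Consequently $|\dstat_{\sgt_n}|_2\ge(1-\nu)n+o(n)$ with very high probability, so the ratio $R_n:=|\dstat_{\sgt_n}|_1/(|\dstat_{\sgt_n}|_2^2-\dstat_{\sgt_n}(1))^{1/2}$ is bounded in probability. Applying Theorem~\ref{thm:main} with $\beta=\eps n^{1/2}/R_n$ makes both tail terms super-polynomially small in $n$, and the union bound $\p{\hyt(\sgt_n)\ge h}\le n\p{|V_n|\ge h}$ then yields $\hyt(\sgt_n)=o(n^{1/2})$ in probability, establishing (3).

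The chief obstacle is the degenerate sub-case of (3) and (4) where $\Phi(\rho)=\infty$---the argument above shows this forces $\rho=0$ and $\nu=0$. Here no tilting yields a nontrivial probability distribution, so the Bienaym\'e reduction collapses and Theorems~\ref{thm:main2} and~\ref{thm:main3} cannot be applied as black boxes. Fortunately the condensation phenomenon is even more extreme in this regime---a giant vertex of degree $n+o(n)$---and Janson's analysis handles it directly, so the closing argument of the previous paragraph goes through unchanged to give both (3) and (4). The main technical work consists in transferring Janson's condensation statements into the precise quantitative form $|\dstat_{\sgt_n}|_2\ge(1-\nu-\eta)n$ with very high probability for every $\eta>0$, uniformly over all the sub-cases of $\nu<1$, and in handling the potentially sparse index set of $n$ with $Z_n(\wts)>0$ along which the limits are sought.
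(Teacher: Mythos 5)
Your reduction via exponential tilting is sound where it applies: since $\nu\le 1$ forces $\rho<\infty$ and $\Phi(\rho)<\infty$ (as you correctly argue, $\Phi(\rho)=\infty$ with $\rho>0$ would give $\Psi(\rho)=\infty$), tilting at $\tau=\rho$ turns $\sgt_n$ into a conditioned Bienaym\'e tree satisfying the hypotheses of Theorem~\ref{thm:main2} (when $\nu=1$, $\sigma^2=\infty$) or Theorem~\ref{thm:main3} (when $0<\nu<1$), and this yields parts (2) and (4) and the $\log^3 n$-weakened forms of (1) and (3). This is a mildly different route from the paper, which never passes through Bienaym\'e trees for simply generated $\sgt_n$: it instead uses Janson's concentration of degree statistics (Theorem~\ref{thm:dstats_simply_generated}) to get $|\dstat_{\sgt_n}|_2^2\ge C|\dstat_{\sgt_n}|_1$ with very high probability and then applies Theorem~\ref{thm:main} directly to the tree, which is uniform given its degree statistics. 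The two routes are equivalent in strength for $\rho>0$.

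There are two genuine gaps. First, your sharp form of (3) rests on the claim that for every weight sequence with $\nu<1$, $\sgt_n$ has, with very high probability, a \emph{single} vertex of degree $(1-\nu)n+o(n)$, giving $|\dstat_{\sgt_n}|_2=\Theta(n)$. This is not a theorem of \cite{MR2908619} in that generality: single-vertex condensation is only known under additional regularity hypotheses on the weights (e.g.\ subexponentiality), and in general the deficit $1-\nu$ may be spread over several large-degree vertices, in which case $|\dstat_{\sgt_n}|_2^2$ can be $o(n^2)$ and the ratio $R_n$ need not be bounded. What \emph{is} available unconditionally is only $\sum_{k>K}k\dstat_{\sgt_n}(k)\ge(1-\nu)n$ for each fixed $K$, hence $|\dstat_{\sgt_n}|_2^2\ge K(1-\nu)n$ for every $K$ --- exactly what the paper uses, and exactly why the paper (and you, honestly, for part (1)) cannot remove the $\log^3 n$ from part (3). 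Second, the case $\rho=0$ (equivalently $\nu=0$) of parts (3) and (4) is not actually handled: no tilting exists, and ``Janson's analysis handles it directly'' is not a substitute for an argument --- the degree-statistics control needed here is not in \cite{MR2908619}, which is why the paper proves Theorem~\ref{thm:nu_zero} from scratch (showing $\dstat_{\sgt_n}(0)/n\to 1$ with very high probability, whence $|\dstat_{\sgt_n}|_2^2\ge C|\dstat_{\sgt_n}|_1$ and Theorem~\ref{thm:main} applies directly). Without supplying that estimate, part (4) in the $\nu=0$ case remains unproven in your write-up.
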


The results of this work establish points (2) and (4) of this conjecture, and establish (1) and (3) up to polylogarithmic factors. To make these deductions, we rely on the following result from \cite{MR2908619} about the typical degree statistics of simply generated trees. 
\begin{thm}[\cite{MR2908619}]\label{thm:dstats_simply_generated}
Let $\wts=(w_k,k \ge 0)$ be a weight sequence with $w_0>0$ and with $w_k>0$ for some $k \ge 2$. Whenever $Z_n(\wts)>0$ let $\sgt_n$ be a simply generated tree with weight sequence $\wts$ and size $n$. 
Assume that $\nu=\nu(\wts)\in (0,1]$. 
Writing $\rho=\rho_{\wts}\in (0,\infty]$, for $k \ge 0$ let 
\begin{equation}\label{eq:pi_def}
\pi(k) := \frac{w_k \rho^k}{\Phi(\rho)}\, .
\end{equation}
Then $\pi=(\pi(k),k \ge 0)$ is a probability distribution, with expectation $\nu$ and variance $\sigma^2 = \rho\Psi'(\rho)$, and the degree statistics $\dstat_{\sgt_n}$ satisfy that for every integer $k \ge 0$ and real $\eps > 0$, 
\[
\p{\left|\frac{\dstat_{\sgt_n}(k)}{n} - \pi(k)\right| > \eps } = \oexp(1)\, .
\]
\end{thm}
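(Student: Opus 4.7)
My plan is to reduce the problem to a statement about conditioned Bienaym\'e trees via a standard tilting argument, then apply the \L ukasiewicz path encoding and the cycle lemma to convert the degree-count concentration into concentration of empirical frequencies for i.i.d.\ samples conditioned on their sum.

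The properties of $\pi$ are elementary. Since $\Phi(\rho)<\infty$ whenever $\nu \le 1$ (cf.\ \cite[Lemma~3.1]{MR2908619}), the sequence $\pi(k) = w_k\rho^k/\Phi(\rho)$ sums to $1$, with probability generating function $z\mapsto \Phi(\rho z)/\Phi(\rho)$; differentiating once gives mean $\Psi(\rho) = \nu$ and a short computation gives variance $\rho\Psi'(\rho) = \sigma^2$. For any plane tree $s$ with $|\vset(s)|=n$, the identity $\sum_v \deg_s(v) = n-1$ yields the tilting formula
\[
\prod_{v \in \vset(s)} \pi(\deg_s(v)) = \frac{\rho^{n-1}}{\Phi(\rho)^n}\,\wts(s)\,,
\]
so $\sgt_n$ has the same law as a Bienaym\'e tree with (sub)critical offspring distribution $\pi$ conditioned on having $n$ vertices.

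Next, the \L ukasiewicz encoding combined with the cycle lemma (cf.\ \cite[\S15]{MR2908619}) gives the following identity in distribution: if $\xi_1,\ldots,\xi_n$ are i.i.d.\ with law $\pi$ and $N_k := |\{i \in [n]: \xi_i = k\}|$, then
\[
(\dstat_{\sgt_n}(k))_{k \ge 0} \eqdist \big((N_k)_{k \ge 0} \,\big|\, \xi_1+\cdots+\xi_n = n-1\big)\, .
\]
Unconditionally $N_k\sim\mathrm{Bin}(n,\pi(k))$, so Hoeffding's inequality yields $\p{|N_k/n - \pi(k)|>\eps} \le 2e^{-2\eps^2 n}$, which is already $\oexp(1)$.

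The main obstacle is showing this concentration survives conditioning on the event $E_n := \{\xi_1+\cdots+\xi_n = n-1\}$. Since the conditional probability is at most $\p{|N_k/n - \pi(k)| > \eps}/\p{E_n}$, it suffices to prove $\p{E_n} \ge e^{-o(n)}$. In the critical case $\nu = 1$ this follows from a local limit theorem---Gaussian if $\sigma^2 < \infty$, $\alpha$-stable otherwise---giving polynomial decay. In the subcritical case $\nu < 1$ the event is a large deviation, but since the generating function of $\pi$ equals $\Phi(\rho\cdot)/\Phi(\rho)$ and thus has radius of convergence exactly $1$, $\pi$ has no exponential moment and we are in the condensation regime. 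A ``one big jump'' construction---requiring one $\xi_i$ to land near $(1-\nu)n$ while the remaining $n-1$ samples sum to within $O(\sqrt n)$ of $\nu(n-1)$---yields a sub-exponential (hence $e^{-o(n)}$) lower bound on $\p{E_n}$. The delicate point is choosing the big-jump target $k^\ast$ in the window of width $O(\sqrt n)$ around $(1-\nu)n$ so that $\pi(k^\ast)$ is not anomalously small; the precise analysis is carried out in \S\S19--20 of \cite{MR2908619}.
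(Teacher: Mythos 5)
The paper does not actually prove this statement: it is quoted from Janson's survey, with the proof located at \cite[Theorem 11.4 and p.~164]{MR2908619}. The relevant comparison is therefore with Janson's argument, and your architecture --- tilt $\wts$ to the offspring law $\pi$, use the \L ukasiewicz/cycle-lemma encoding to identify $(\dstat_{\sgt_n}(k))_k$ with $(N_k)_k$ conditioned on $\xi_1+\cdots+\xi_n=n-1$, apply Hoeffding unconditionally, and divide by $\p{E_n}$ --- is exactly that argument. The reductions are all correct (including the claim $\Phi(\rho)<\infty$ when $\nu\le 1$), and you have correctly isolated the one genuinely hard input: the subexponential lower bound $\p{\xi_1+\cdots+\xi_n=n-1}\ge e^{-o(n)}$.

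Your justification of that input, however, has a real gap. In the critical case, $\sigma^2=\infty$ does \emph{not} place $\pi$ in the domain of attraction of any $\alpha$-stable law, so no local limit theorem is available in general; what is true (and is what Janson establishes via his analysis of the asymptotics of $Z_n$, equivalently $Z_n^{1/n}\to\Phi(\rho)/\rho$) is only the weaker bound $\p{S_n=n-1}=e^{-o(n)}$, which suffices but requires a separate argument (subadditivity along the support lattice of $\pi$ plus the fact that the log-moment generating function of $\xi-1$ has infimum $0$, together with a patching step to pass from an interval to a single lattice point). In the subcritical case the one-big-jump construction as described can fail outright: the support of $\pi$ may contain no integer at all in a window of width $O(\sqrt n)$ (or even $o(n)$) around $(1-\nu)n$, and even where it does, you again need a point rather than an interval lower bound for the remaining sum $S_{n-1}$. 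You flag this and defer to Janson's Sections 19--20, which is precisely where the substantive work lives; as a self-contained proof the proposal is therefore incomplete at exactly this step, though the overall route coincides with the cited one.
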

This theorem is essentially a special case of \cite[Theorem 11.4]{MR2908619}. The error bounds stated above are not made explicit in the statement of that theorem, but are recorded in the course of its proof (see \cite[page 164]{MR2908619}). 

We also require a version of Theorem~\ref{thm:dstats_simply_generated} which addresses the case that $\nu=\rho=0$. 
Before stating this result, note that if $\rho_{\wts}=0$ then the probability distribution $\pi$ defined by \eqref{eq:pi_def} has
$\pi(0)=1$ and $\pi(k)=0$ for $k > 0$.
\begin{thm}\label{thm:nu_zero}
Let $\wts=(w_k,k \ge 0)$ be a weight sequence with $w_0>0$ and with $w_k>0$ for some $k \ge 2$. Whenever $Z_n(\wts)>0$ let $\sgt_n$ be a simply generated tree with weight sequence $\wts$ and size $n$. 
Suppose that $\rho_\wts=0$. Then the degree statistics $\dstat_{\sgt_n}$ satisfy that for every real $\eps > 0$, 
\[
\p{\frac{\dstat_{\sgt_n}(0)}{n} < 1- \eps} = \oexp(1)\, .
\]
\end{thm}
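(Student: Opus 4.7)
The plan is to adapt Janson's proof of Theorem~\ref{thm:dstats_simply_generated} (that is, \cite[Theorem 11.4]{MR2908619}) to the degenerate case $\rho_\wts = 0$, where the canonical probability measure $\pi(c) = w_c \rho^c/\Phi(\rho)$ collapses to the Dirac mass at $0$ and the standard tilting argument is unavailable. By the scale-invariance $\sgt_n(\wts) \eqdist \sgt_n((\alpha\beta^c w_c)_{c\ge 0})$, I may assume $w_0 = 1$.

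First, using a marked-tree generating function argument together with Lagrange inversion applied to the generating function $Z(y) = \sum_n Z_n y^n$, which satisfies $Z(y) = y\Phi(Z(y))$, I would establish the explicit formula
\[
\E{\dstat_{\sgt_n}(0)} = n\cdot a_n, \qquad a_n := \frac{[u^{n-1}]\Phi(u)^{n-1}}{[u^{n-1}]\Phi(u)^n}.
\]
Expanding $[u^{n-1}]\Phi(u)^n = \sum_{j\ge 0}w_j[u^{n-1-j}]\Phi(u)^{n-1}$ yields $1/a_n = 1 + \sum_{j\ge 1}w_j\beta_n(j)$, where $\beta_n(j) := [u^{n-1-j}]\Phi(u)^{n-1}/[u^{n-1}]\Phi(u)^{n-1}$. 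So it suffices to show $\sum_{j\ge 1}w_j\beta_n(j) \to 0$ as $n \to \infty$, which would give $a_n \to 1$ and hence $\E{\dstat_{\sgt_n}(0)}/n \to 1$.

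The hardest step will be proving that this weighted sum vanishes despite the potentially irregular growth of the $w_k$. The guiding phenomenon is condensation: $\rho_\wts = 0$ (equivalently $\limsup_k w_k^{1/k} = \infty$) forces the coefficients of $\Phi(u)^{n-1}$ to grow rapidly with their index, so that $\beta_n(j)$ decays quickly enough to absorb the growth of $w_j$. To make this rigorous, I would truncate $\wts$ at a growing level $K_n \to \infty$, chosen so that the truncated weight sequence $\wts^{(K_n)}$ has a well-behaved limit distribution, and compare $\sgt_n(\wts)$ with $\sgt_n(\wts^{(K_n)})$; simultaneously I would lower-bound $Z_n$ using explicit ``condensed'' tree configurations (a few nodes of very high degree, many leaves) whose weight is at least $A^{n-1}$ for arbitrarily large $A$, by invoking the existence of indices $K$ with $w_K \ge A^K$ that is guaranteed by $\rho_\wts = 0$.

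Finally, to upgrade from convergence in expectation to the stated $\oexp(1)$ tail bound, I would use that $\sgt_n$ is uniform on $\tset_{\dstat_{\sgt_n}}$ conditionally on its degree statistics, so it suffices to prove exponential concentration of $\dstat_{\sgt_n}(0)$. The plan for this is a direct combinatorial weight comparison: the total weight of ``bad'' degree statistics (with $\dstat(0) < (1-\eps)n$) picks up only an entropy factor together with a product of moderate-degree weights, whereas the total weight of condensed configurations grows like $A^{n-1}$ with $A$ arbitrarily large, producing an exponential gap. The main technical difficulty throughout will be handling the potentially arbitrary irregularity of the sequence $(w_k)$, which only satisfies $w_k^{1/k} \to \infty$ along a subsequence; this will be addressed by choosing the truncation level and comparison indices carefully along that subsequence.
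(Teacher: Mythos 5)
There is a genuine gap, and it sits exactly at the step you identify as the conclusion. Your plan splits into (a) showing $\E{\dstat_{\sgt_n}(0)}/n \to 1$ via Lagrange inversion, and (b) upgrading to the $\oexp(1)$ bound by a ``direct combinatorial weight comparison'' between the total weight of bad degree statistics and the total weight of condensed configurations. Step (a), even if completed, only yields $\p{\dstat_{\sgt_n}(0)/n < 1-\eps} = o(1)$ by Markov's inequality, so all of the content of the theorem lives in step (b) --- and the central heuristic of step (b) is false. A degree statistics with $\dstat(0) < (1-\eps)n$ need not be built from ``moderate-degree weights'': for example $\dstat(0)=n/2$, $\dstat(1)=n/2-1$, $\dstat(n/2)=1$ is a valid bad degree statistics whose weight contains the factor $w_{n/2}$, which can be super-exponentially large precisely because $\rho_\wts=0$. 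So the aggregate weight of the bad set is not dominated by an entropy factor times bounded weights, and comparing it wholesale against a lower bound $Z_n \ge A^{n-1}$ cannot produce the required exponential gap: both sides can be driven up by the same huge weights. What is needed (and what your sketch never sets up) is a \emph{relative} comparison: a map $\dstat \mapsto \hat\dstat$ from bad to good degree statistics such that $\wts(\hat\dstat)/\wts(\dstat)$ is exponentially large, together with control of the multiplicity of the map and of the ratio $|\tset_{\dstat}|/|\tset_{\hat\dstat}|$ of the numbers of trees realizing each. The paper does exactly this: it replaces, for each $1\le c\le L$, blocks of $M$ nodes of degree $c$ by $c$ nodes of degree $M$ and $M-c$ leaves, where $M$ is chosen (using $\rho_\wts=0$) so that $w_M \ge (Kw_c)^{M/c}$; this gives $\wts(\hat\dstat)/\wts(\dstat)\ge K^{\delta n}$, at most $n^L$ preimages per image, and $|\tset_\dstat|/|\tset_{\hat\dstat}| \le ((M-1)!)^L(L+1)^n$, which together beat the single exponential $(L+1)^n$ by choosing $K^\delta > 2(L+1)$.

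Your step (a) and the intermediate claim $\sum_{j\ge1} w_j\beta_n(j)\to 0$ are also left essentially unargued: the proposed truncation at a level $K_n$ and comparison of $\sgt_n(\wts)$ with $\sgt_n(\wts^{(K_n)})$ is not a routine perturbation, since truncating the weight sequence changes $Z_n$ by super-exponential factors and there is no stated coupling or ratio bound between the two models. As written, the proposal correctly diagnoses the relevant phenomenon (condensation forces mass onto leaves and a few huge-degree nodes) but does not supply the injection-with-multiplicity-control argument that turns that phenomenon into an exponential tail bound.
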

This theorem asserts that when the radius of convergence of $\Phi$ is zero, with very high probability $\sgt_n$ has $n-o(n)$ leaves.
\begin{proof}
Fix $\delta > 0$ and integer $L>2$. We claim that 
\begin{equation}\label{eq:toprove}
\frac{\p{\sum_{1 \le c \le L} \dstat_{\sgt_n}(c) \ge 2\delta n}}{\p{\sum_{1 \le c \le L} \dstat_{\sgt_n}(c) \le \delta n}} = \oexp(1)\, .
\end{equation}
Since $\sum_{c \ge 1} c\dstat_{\sgt_n}(c) =n-1$, 
we deterministically have $\sum_{c > L} \dstat_{\sgt_n}(c) \le (n-1)/L$,
so the bound \eqref{eq:toprove} implies that 
\[
\p{\sum_{c \ge 1} \dstat_{\sgt_n}(c) \le 2\delta n + \frac{n-1}{L}} \ge
\p{\sum_{1 \le c \le L} \dstat_{\sgt_n}(c) \le 2\delta n} = 1-\oexp(1).
\]
Since $\delta > 0$ and $L>2$ are arbitrary, this proves the theorem. It thus remains to prove~\eqref{eq:toprove}.

It's convenient in what follows to assume that $w_0=1$. (We can achieve this by multiplying all weights by $w_0^{-1}$; this does not change the distribution of $\sgt_n$.) 
Now fix $K$ large enough that $K^\delta > 2(L+1)$.
Note that if $\limsup_{k \to \infty}(\log w_k)/k = r < \infty$, then $\rho_\wts \ge 1/r$; since we assume $\rho_\wts=0$, it follows that $\limsup_{k \to \infty}(\log w_k)/k=\infty$, 
so we may further choose an integer $M > 2L$ such that 
\[
w_M \ge 
\max
\pran{(K w_c)^{M/c}, 0 < c \le L}. 
\]

In what follows, given a sequence $\dstat$ which is the degree statistics of a tree (so $\sum_{c \ge 0}\dstat(c)=|\dstat|_1+1$), it's useful to write 
$\wts(\dstat) := \prod_{c \ge 0} w_c^{\dstat(c)}$ --- so if $t$ is a tree with degree statistics $\dstat$ then $\wts(t)=\wts(\dstat)$. 
Now, for such a sequence $\dstat$, form degree statistics $\hat{\dstat}$ as follows. 
For $c \in \N$ let $m(c) = \lfloor \dstat(c)/M\rfloor$. Then for $i \in \N$ define 
\[
\hat{\dstat}(i) = 
\begin{cases}
\dstat(0) + \sum_{0 < c \le L} (M-c)m(c)	& \mbox{ if } i=0\\
\dstat(i)-Mm(i) &\mbox{ if }0 < i \le L \\
\dstat(i)+\sum_{0 < c \le L} cm(c) &\mbox{ if }i=M\\
\dstat(i) &\mbox{ otherwise.}
\end{cases}
\]
Then $|\hat{\dstat}|_1=|\dstat|_1$ and $\sum_{c \ge 0} c\hat{\dstat}(c) = \sum_{c \ge 0} \dstat(c)$, so $\hat{\dstat}$ is again the degree statistics of a tree with $|\dstat|_1+1$ vertices. 
Since $w_0=1$, we also have 
\begin{align*}
\frac{\wts(\hat{\dstat})}{\wts(\dstat)}
& = 
(w_M)^{\sum_{0 < c \le L} cm(c)} 
\cdot
\prod_{0 < c \le L} (w_c)^{-Mm(c)}
 = 
\prod_{0 < c \le L}
\pran{\frac{w_M^c}{w_c^M}}^{m(c)}
\end{align*}
Since $w_M \ge (Kw_c)^{M/c}$, this yields that 
\[
\frac{\wts(\hat{\dstat})}{\wts(\dstat)}
\ge \prod_{0 < c\le L}
K^{Mm(c)} = K^{\sum_{0 < c \le L} M\lfloor \dstat(c)/M\rfloor}\, .
\]

Now write $n = \sum_{c \ge 0} \dstat(c)=\sum_{c \ge 0} \hat{\dstat}(c)$. 
Note that for all $i\in\N$ we have $\dstat(i)-Mm(i) \le M-1$, so 
 $\sum_{0 < c \le L} \hat{\dstat}(c) \le (M-1)L$, and thus if $n \ge (M-1)L/\delta$ then $\sum_{0 < c \le L} \hat{\dstat}(c) \le \delta n$.
For such $n$, if $\sum_{0 < c \le L} \dstat(c) \ge 2\delta n$, then we also have 
$\sum_{0 < c \le L} M \lfloor \dstat(c)/M\rfloor \ge (\sum_{0 < c \le L} \dstat(c))-(M-1)L \ge \delta n$, so it follows from the previous lower bound on $\wts(\hat{\dstat})/\wts(\dstat)$ that 
\begin{equation}\label{eq:weight_gain_is_good}
\frac{\wts(\hat{\dstat})}{\wts(\dstat)} \ge K^{\delta n}\, .
\end{equation}

To use this bound to complete the proof, it remains to control (a) the number of degree statistics $\dstat$ that can give rise to a given degree statistics $\hat{\dstat}$, and (b), for a given pair of degree statistics $\dstat$ and $\hat{\dstat}$, the relative numbers of trees with these degree statistics. 

To control (a), 
fix a sequence $\dstat'$ which is the degree statistics of a tree of size $n$. Then for any degree statistics $\dstat$ with $\hat{\dstat}=\dstat'$, there are non-negative integers $m_1,\ldots,m_L$ with $\sum_{0 < c \le L} cm_c < n$
such that $\dstat'(c)=\dstat(c)-Mm_c$ for $0 < c \le L$. Moreover, $\dstat$ may be reconstructed from $\dstat'$ and the values $m_1,\ldots,m_L$. It follows that 
\[
|\{\text{Degree statistics }\dstat: \hat{\dstat}=\dstat'\}| \le 
\Big|\Big\{(m_1,\ldots,m_L)\in \N^L: \sum_{0 < c < L} cm_c < n\Big\}\Big| < n^L\, .
\]

To control (b), note that if $\dstat$ is the degree statistics of a tree of size $n$, then by the formula \eqref{eq:dstat_count} for the number of trees with given degree statistics, we have 
\begin{align*}
\frac{|\tset_{\dstat}|}{|\tset_{\hat{\dstat}}|}
& = 
\prod_{c \ge 0} \frac{\hat{\dstat}(c)!}{\dstat(c)!}\\
& = 
\frac{(\dstat(0)+\sum_{c=1}^L (M-c)m(c))!}{\dstat(0)!}
\cdot
\frac{(\dstat(M)+\sum_{c=1}^L cm(c))!}{\dstat(M)!}
\cdot
\prod_{c=1}^L \frac{(\dstat(c)-Mm(c))!}{\dstat(c)!}\, .
\end{align*}
Since $M > 2L > 2$, we have $\dstat(0)> \dstat(M)$ and $\sum_{c=1}^L (M-c)m(c) \ge \sum_{c=1}^L cm(c)$. Thus 
$(\dstat(0)+\sum_{c=1}^L (M-c)m(c))! > (\dstat(M)+\sum_{c=1}^L cm(c))!$ and so 
\[
\frac{(\dstat(0)+\sum_{c=1}^L (M-c)m(c))!}{\dstat(0)!}
\cdot
\frac{(\dstat(M)+\sum_{c=1}^L cm(c))!}{\dstat(M)!}
\le \frac{(\dstat(0)+\sum_{c=1}^L Mm(c))!}{\dstat(0)!}.
\]
It follows that 
\[
\frac{|\tset_{\dstat}|}{|\tset_{\hat{\dstat}}|}
\le 
\frac{(\dstat(0)+\sum_{c=1}^L Mm(c))!}{\dstat(0)!}
\cdot 
\prod_{c=1}^L \frac{(\dstat(c)-Mm(c))!}{\dstat(c)!}\, .
\]
Since $\dstat(c)-Mm(c) \le M-1$ and 
$Mm(c)\le \dstat(c)$ for all $c \in \N$, this yields the bound 
\begin{equation}\label{eq:set_size_relation}
\frac{|\tset_{\dstat}|}{|\tset_{\hat{\dstat}}|}
\le 
((M-1)!)^L \frac{(\sum_{c=0}^L \dstat(c))!}{\prod_{c=0}^L\dstat(c)!} 
 \le ((M-1)!)^L (L+1)^n\, ,
\end{equation}
where in the last inequality we have used the fact that the final fraction is a multinomial coefficient and that $\sum_{c=0}^L \dstat(c) \le \sum_{c \ge 0} \dstat(c)=n$. 
To conclude, write $N_n$ (resp.\ $\hat{N}_n$) for the set of degree statistics $\dstat$ with $\sum_{c \ge 0}\dstat(c)=n=|\dstat|_1+1$ and such that 
$\sum_{1 \le c \le L} \dstat(c) \ge 2\delta n$ (resp.\ such that $\sum_{1 \le c \le L} \dstat(c) \le \delta n$), and note that if $\dstat \in N_n$ then $\hat{\dstat} \in \hat{N}_n$ provided that $n \ge (M-1)L/\delta$. 
We have 
\begin{align*}
\p{\sum_{1 \le c \le L} \dstat_{\sgt_n}(c) \ge 2\delta n}
& = \sum_{\dstat \in N_n} \p{\dstat_{\sgt_n}=\dstat}\\
& = 
\sum_{\dstat \in N_n}
\sum_{t \in \tset_{\dstat}}
\p{\sgt_n=t}\\
& =
\sum_{\dstat \in N_n}
|\tset_\dstat| \frac{\wts(\dstat)}{Z_n(\wts)} \\
& \le
\sum_{\dstat \in N_n}
((M-1)!)^L (L+1)^n |\tset_{\hat{\dstat}}| 
\frac{\wts(\hat{\dstat})}{Z_n(\wts)}\frac{1}{K^{\delta n}}\,,
\end{align*}
where we have used \eqref{eq:weight_gain_is_good} and \eqref{eq:set_size_relation} for the final bound. 
For each $\dstat' \in \hat{N}_n$, 
there are at most $n^L$ sequences $\dstat \in N_n$ with $\hat{\dstat}=\dstat'$, so for $n \ge (M-1)L/\delta$ the above bound yields 
\begin{align*}
\p{\sum_{1 \le c \le L} \dstat_{\sgt_n}(c) \ge 2\delta n}
& \le n^L((M-1)!)^L ((L+1)/K^\delta)^n \sum_{\dstat' \in \hat{N}_n} 
|\tset_{\dstat'}| 
\frac{\wts(\dstat')}{Z_n(\wts)} \\
& = (n(M-1)!)^L ((L+1)/K^\delta)^n \
\p{\sum_{1 \le c \le L} \dstat_{\sgt_n}(c) \le \delta n}\, .
\end{align*}
Since $K^\delta > 2(L+1)$, the term $(n(M-1)!)^L ((L+1)/K^\delta)^n$ tends to zero as $n \to \infty$, which establishes \eqref{eq:toprove} and completes the proof. 
\end{proof}
The next corollary is the key takeaway from Theorems~\ref{thm:dstats_simply_generated} and~\ref{thm:nu_zero}, for the purposes of this work.
\begin{cor}\label{cor:dstats_simply_generated}
In the setting of Theorems~\ref{thm:dstats_simply_generated} and~\ref{thm:nu_zero}, if $\nu < 1$, or if $\nu=1$ and $\sigma^2=\infty$, then for any $C > 0$, with very high probability 
\[
|\dstat_{\sgt_n}|_2^2 \ge C|\dstat_{\sgt_n}|_1. 
\]
\end{cor}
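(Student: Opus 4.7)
The plan is to split the argument into three cases covering the hypothesis ``$\nu<1$, or $\nu=1$ and $\sigma^2=\infty$'': namely, (a) $\rho_\wts=0$; (b) $\rho_\wts>0$ and $\sigma^2=\infty$; and (c) $\rho_\wts>0$, $\sigma^2<\infty$, and $\nu<1$. In every case I combine the deterministic identity $|\dstat_{\sgt_n}|_1=n-1$ with the relevant degree-statistics theorem (Theorem~\ref{thm:dstats_simply_generated} or Theorem~\ref{thm:nu_zero}) to force mass onto nodes of large degree, which automatically drives up the second moment $|\dstat_{\sgt_n}|_2^2 = \sum_{c \ge 0} c^2 \dstat_{\sgt_n}(c)$.

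In Case~(a), Theorem~\ref{thm:nu_zero} says that for any $\eps>0$, with very high probability at most $\eps n$ nodes have positive degree. Cauchy--Schwarz applied to the vectors $(c\sqrt{\dstat_{\sgt_n}(c)})_{c \ge 1}$ and $(\sqrt{\dstat_{\sgt_n}(c)})_{c \ge 1}$ then gives
\[
(n-1)^2 = \Big(\sum_{c \ge 1} c\dstat_{\sgt_n}(c)\Big)^2 \le |\dstat_{\sgt_n}|_2^2 \cdot \sum_{c \ge 1}\dstat_{\sgt_n}(c) \le \eps n\cdot|\dstat_{\sgt_n}|_2^2\, ,
\]
so $|\dstat_{\sgt_n}|_2^2 \ge (n-1)^2/(\eps n) \ge C(n-1)$ as soon as $\eps \le 1/C$ and $n$ is large.

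In Case~(b), since $\sigma^2=\infty$ the distribution $\pi$ has infinite second moment, so I fix $K$ with $\sum_{k=0}^K k^2 \pi(k) > 2C$. Theorem~\ref{thm:dstats_simply_generated} applied to each $k \in \{0,\ldots,K\}$ together with a finite union bound then yields $|\dstat_{\sgt_n}|_2^2 \ge \sum_{k=0}^K k^2 \dstat_{\sgt_n}(k) \ge (2C-\eps K^3)n \ge Cn$ with very high probability, for $\eps$ chosen small. In Case~(c), since $\pi$ has mean $\nu<1$ I fix $K$ with $(K+1)(1-\nu) > 2C$; Theorem~\ref{thm:dstats_simply_generated} and a finite union bound give $\sum_{c=0}^K c \dstat_{\sgt_n}(c) \le (\nu+\eps K^2)n$ with very high probability, which combined with $\sum_{c \ge 0} c\dstat_{\sgt_n}(c)=n-1$ forces $\sum_{c > K} c\dstat_{\sgt_n}(c) \ge (1-\nu-\eps K^2)n-1$, and hence $|\dstat_{\sgt_n}|_2^2 \ge (K+1)\sum_{c > K}c\dstat_{\sgt_n}(c) \ge Cn$ for $\eps$ small and $n$ large.

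The argument is essentially a three-way case split and I do not foresee any serious obstacle; the only mild subtlety is that Case~(a) cannot be treated by truncating the limiting distribution $\pi$, since $\pi$ is degenerate at $0$ there, which is why the Cauchy--Schwarz trick is used instead to convert ``almost all vertices are leaves'' into ``the second moment is large''.
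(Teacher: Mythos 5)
The proposal is correct and follows essentially the same route as the paper: the same three-way case split (the paper organizes it as $0<\nu<1$, $\nu=1$ with $\sigma^2=\infty$, and $\nu=0$, which is equivalent to $\rho_\wts=0$), driven by the same inputs — Theorem~\ref{thm:dstats_simply_generated} for the nondegenerate cases and Theorem~\ref{thm:nu_zero} for the $\rho_\wts=0$ case — with the only real variation being your Cauchy--Schwarz step in case (a), where the paper instead lower-bounds $\sum_{k>C}k\dstat_{\sgt_n}(k)$ directly and multiplies by $C+1$; both are equally short and your version is arguably cleaner. One microscopic fix: in case (a) you must take $\eps$ strictly less than $1/C$ (say $\eps=1/(2C)$), since at $\eps=1/C$ the bound $(n-1)^2/(\eps n)=C(n-1)\cdot\tfrac{n-1}{n}$ falls just short of $C(n-1)$ for every $n$, and letting $n\to\infty$ does not repair this.
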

In the same way that Theorems~\ref{thm:main2} and~\ref{thm:main3} follow from Propositions~\ref{prop:dstats_infinite_variance} and~\ref{prop:dstats_zero_radius}, Corollary~\ref{cor:dstats_simply_generated} implies that if $\nu < 1$, or if $\nu=1$ and $\sigma^2=\infty$, then 
$\hyt(\sgt_n)/(n^{1/2}\log^3 n) \to 0$ and $\wid(\sgt_n)/n^{1/2} \to \infty$ in probability. This proves the second and fourth points of the above conjecture and nearly proves the first and third points, up to the polylogarithmic factors. 
\begin{proof}[Proof of Corollary~\ref{cor:dstats_simply_generated}]
We begin by noting that if the support of $\wts$ is finite then $\rho=\infty$ and thus $\nu = \Psi(\infty) = \max(k:w_k>0) > 1$.

We now argue in three cases. First, suppose that $0<\nu<1$. In this case the support of $\wts$ is infinite, so for any fixed $K\in \N$, $\sum_{k\leq K}k\pi(k) < \sum_{k\geq 0} k\pi(k) = \nu$. 
It follows by Theorem~\ref{thm:dstats_simply_generated} that with very high probability $\sum_{k\leq K} k\dstat_{\sgt_n}(k) \leq \nu n-1$. Since 
\[
\sum_{k> K} k\dstat_{\sgt_n}(k)  
= 
|\dstat_{\sgt_n}|_1 -
\sum_{k\leq K} k\dstat_{\sgt_n}(k)
= n-1-
\sum_{k\leq K} k\dstat_{\sgt_n}(k)\, ,
\]
this implies that with very high probability 
    \begin{equation*}
        \sum_{k> K} k\dstat_{\sgt_n}(k)  
        \geq n(1-\nu).
    \end{equation*}
    Now let $C>0$ arbitrary and fix $K\in \N$ such that $K(1-\nu) >C$. Then 
    since 
    $|\dstat_{\sgt_n}|_2^2 \geq \sum_{k> K} k^2\dstat_{\sgt_n}(k) > K  \sum_{k> K} k\dstat_{\sgt_n}(k)$, we have
    \begin{align*}
        \p{|\dstat_{\sgt_n}|_2^2 < C|\dstat_{\sgt_n}|_1} &\leq \p{ K  \sum_{k> K} k\dstat_{\sgt_n}(k)  < Cn} \\
        &  = \p{ K  \sum_{k> K} k\dstat_{\sgt_n}(k)  < Cn,\, \sum_{k> K} k\dstat_{\sgt_n}(k) <  n(1-\nu)}  = \oexp(1)\, .
    \end{align*}
    This establishes the corollary in the case that $\nu \in (0,1)$.
    
    Next, suppose $\nu = 1$ and $\sigma^2 = \infty$. Let $C>0$ arbitrary. Then since we have $\lim_{K \to \infty} \sum_{k=0}^K k^2 \pi(k) = \infty$, there exists $K \in \N$ such that $\sum_{k=0}^K k^2 \pi(k) \geq 2C$. Noting that $|\dstat_{\sgt_n}|_1 = n-1$, we can write 
\begin{align*}
    \p{|\dstat_{\sgt_n}|_2^2 < C|\dstat_{\sgt_n}|_1} &\leq \p{|\dstat_{\sgt_n}|_2^2 < Cn} \\ 
    &\leq \p{ \sum_{k=0}^K k^2 \frac{\dstat_{\sgt_n} (k)}{n}  < C} \, .
\end{align*}
Then, taking $\eps < 6C/{(K(K+1)(2K+1)})$ and applying Theorem~\ref{thm:dstats_simply_generated}, we have 
\begin{equation*}
    \p{|\dstat_{\sgt_n}|_2^2 < C|\dstat_{\sgt_n}|_1} \leq 
    \p{\sum_{k=0}^K k^2 \frac{\dstat_{\sgt_n} (k)}{n} < C , \bigcap_{k=0}^K \left\{\left|\frac{\dstat_{\sgt_n}(k)}{n} - \pi(k)\right| \leq \eps\right\} } + \oexp(1) \, . 
\end{equation*}
However, if $|\tfrac{\dstat_{\sgt_n}(k)}{n} - \pi(k)| \leq \eps$ for all $0 \le k \le K$ then 
\begin{equation*}
    \sum_{k=0}^K k^2 \frac{\dstat_{\sgt_n}(k)}{n} \geq \sum_{k=0}^K k^2 (\pi(k) - \eps) \geq 2C - \eps \sum_{k=0}^K k^2 > C \, ,
\end{equation*}
so the intersection of events in the probability on the right-hand side is empty and thus this probability is zero. Therefore $\p{|\dstat_{\sgt_n}|_2^2 < C|\dstat_{\sgt_n}|_1}=\oexp(1)$, as required. 

Finally, suppose that $\nu=0$, and fix $C \in \N$.  By Theorem~\ref{thm:nu_zero}, we have 
\[
\p{\sum_{k=1}^C \dstat_{\sgt_{n}}(k) > \frac{n}{2C^2}}
\le 
\p{\frac{\dstat_{\sgt_{n}}(0)}{n} < 1-\frac{1}{2C^2}} = \oexp(1). 
\]
However, if $\sum_{k=1}^C \dstat_{\sgt_{n}}(k) < n/(2C^{2})$ then $\sum_{k=1}^C k\dstat_{\sgt_{n}}(k) < n/(2C)$, so 
\[
\sum_{k > C}k\dstat_{\sgt_{n}}(k)
=
n-1 -\sum_{k \le C} k\dstat_{\sgt_{n}}(k)
\ge 
n-1-\frac{n}{2C}\, ,
\]
and thus 
\[
\sum_{k \ge 1}k^2\dstat_{\sgt_{n}}(k)
\ge
(C+1)\sum_{k > C}k\dstat_{\sgt_{n}}(k)
\ge (C+1)(n-1)-\frac{n(C+1)}{2C} > C(n-1)=C|\dstat_{\sgt_n}|_1\, ,
\]
the last inequality holding for $n$ large provided $C >1$. This shows that $|\dstat_{\sgt_n}|_2^2>C|\dstat_{\sgt_n}|_1$ with very high probability, and completes the proof.
\end{proof}
\begin{proof}[Proofs of Propositions~
\ref{prop:dstats_infinite_variance} and~\ref{prop:dstats_zero_radius}]
Define a weight sequence $\wts$ by $\wts_k = \mu(k)$. Then $\sgt_n=\sgt_n(\wts)$ is distributed as a Bienaym\'e tree with offspring distribution $\mu$, conditioned to have size $n$. 

Now suppose that $\mu$ satisfies the assumptions of either Proposition~\ref{prop:dstats_infinite_variance} or Proposition~\ref{prop:dstats_zero_radius}. 
Then either $|\mu|_2^2=\infty$ or $\sum_{k \ge 0} e^{tk}\mu(k) =\infty$ for all $t > 0$. In either case, $\wts$ has radius of convergence $\rho$ equal to $1$. Thus $\nu = \Psi(\rho) = \sum_{k \ge 0} kw_k = |\mu|_1$, and 
either $\nu < 1$ or else $\nu=1$ and $\sigma^2 = \rho\Psi'(\rho)=\Psi'(1) = \sum_{k \ge 0} k^2 w_k-(\sum_{k \ge 0} kw_k)^2 = |\mu|_2^2-|\mu|_1=\infty$, whence the (common) conclusion of the propositions follows from Corollary~\ref{cor:dstats_simply_generated}. 
\end{proof}
\begin{proof}[Proof of Proposition~\ref{prop:dstats_finite_variance}]
Again define a weight sequence $\wts$ by $\wts_k = \mu(k)$. Then $\sgt_n=\sgt_n(\wts)$ is distributed as a Bienaym\'e tree with offspring distribution $\mu$, conditioned to have size $n$. 

Fix $\eps > 0$ and let $E_n(k)$ be the event that $\dstat_{\sgt_n}(k) \ge n(\mu(k)-\eps/2^k)$. 
By Theorem~\ref{thm:dstats_simply_generated}, $E_n(k)$ happens with very high probability. Now fix $K\ge 2$ large enough that $\mu(K,\infty)< \eps/2$. If $E_n(k)$ occurs for each $2 \le k \le K$ then 
\[
|\dstat_{\sgt_n}|_2^2 \ge \dstat_{\sgt_n}(1) + \sum_{k=2}^K k^2 n (\mu(k)-\eps/2^k) \ge \dstat_{\sgt_n}(1) + 4 n \Big( \sum_{k=2}^K \mu(k) - \sum_{k=2}^K \eps / 2^k \Big) \, , 
\]
in which case, since $|\dstat_{\sgt_n}|_1 = n-1$, we have
\[ 
    |\dstat_{\sgt_n}|_2^2 - \dstat_{\sgt_n}(1) \ge 4 n \Big(1 - \mu(0) - \mu(1) - \mu(K, \infty) - \sum_{k=2}^K \frac{\eps}{2^k} \Big) > |\dstat_{\sgt_n}|_1 \cdot 4 (1 - \mu(0) - \mu(1) - \eps)\,.
\]
Since $\bigcap_{k=2}^K E_n(k)$ occurs with very high probability, the result follows.
\end{proof}

\addtocontents{toc}{\SkipTocEntry} 
\section*{\bf Acknowledgements}
During this work LAB was partially supported by NSERC Discovery Grant 643473. The authors also thank Serte Donderwinkel for useful discussions, and for pointing out a gap in one of the proofs in an early version of the paper (as well as how to fix it); and two referees, whose careful reading substantially improved the paper.


\small 

\bibliographystyle{plainnat}
\bibliography{2021-05_pth_arxiv}

\begin{thebibliography}{19}
\providecommand{\natexlab}[1]{#1}
\providecommand{\url}[1]{\texttt{#1}}
\expandafter\ifx\csname urlstyle\endcsname\relax
  \providecommand{\doi}[1]{doi: #1}\else
  \providecommand{\doi}{doi: \begingroup \urlstyle{rm}\Url}\fi

\bibitem[Addario-Berry(2012)]{MR2956056}
L.~Addario-Berry.
\newblock Tail bounds for the height and width of a random tree with a given
  degree sequence.
\newblock \emph{Random Structures Algorithms}, 41\penalty0 (2):\penalty0
  253--261, 2012.
\newblock ISSN 1042-9832.
\newblock \doi{10.1002/rsa.20438}.
\newblock URL \url{https://doi.org/10.1002/rsa.20438}.

\bibitem[Addario-Berry(2019)]{MR3916103}
Louigi Addario-Berry.
\newblock Most trees are short and fat.
\newblock \emph{Probab. Theory Related Fields}, 173\penalty0 (1-2):\penalty0
  1--26, 2019.
\newblock ISSN 0178-8051.
\newblock \doi{10.1007/s00440-018-0829-x}.
\newblock URL \url{https://doi.org/10.1007/s00440-018-0829-x}.

\bibitem[Addario-Berry et~al.(2013)Addario-Berry, Devroye, and
  Janson]{MR3077536}
Louigi Addario-Berry, Luc Devroye, and Svante Janson.
\newblock Sub-{G}aussian tail bounds for the width and height of conditioned
  {G}alton-{W}atson trees.
\newblock \emph{Ann. Probab.}, 41\penalty0 (2):\penalty0 1072--1087, 2013.
\newblock ISSN 0091-1798.
\newblock \doi{10.1214/12-AOP758}.
\newblock URL \url{https://doi.org/10.1214/12-AOP758}.

\bibitem[Ben~Arous et~al.(2019)Ben~Arous, Cabezas, and Fribergh]{MR3947331}
G\'{e}rard Ben~Arous, Manuel Cabezas, and Alexander Fribergh.
\newblock Scaling limit for the ant in a simple high-dimensional labyrinth.
\newblock \emph{Probab. Theory Related Fields}, 174\penalty0 (1-2):\penalty0
  553--646, 2019.
\newblock ISSN 0178-8051.
\newblock \doi{10.1007/s00440-018-0876-3}.
\newblock URL \url{https://doi.org/10.1007/s00440-018-0876-3}.

\bibitem[Bhamidi and Sen(2020)]{ss21}
Shankar Bhamidi and Sanchayan Sen.
\newblock Geometry of the minimal spanning tree in the heavy-tailed regime: new
  universality classes.
\newblock arXiv:2009.10696 [math.PR], September 2020.
\newblock URL \url{https://arxiv.org/abs/2009.10696}.

\bibitem[Bienaym\'e(1845)]{bienayme}
I.J. Bienaym\'e.
\newblock De la loi de multiplication et de la dur\'ee des familles.
\newblock In \emph{L'Institut, journal universel des sciences et des
  soci\'et\'s savantes en France et \`a l'\'etranger. 1re Section. - Sciences
  math{\'e}matiques, physiques et naturelles. Soci\'et\'e philomatique de
  Paris, extrait des proc\`es-verbaux des s\'eances pendant l'ann\'ee 1845},
  pages 37--39, Rue du four-Saint-Germain, 47, 1845. Imprimerie de Gossoin.
\newblock URL
  \url{http://www.philomathique.paris/index.php/archives-1791-1917}.

\bibitem[Boucheron et~al.(2013)Boucheron, Lugosi, and Massart]{MR3185193}
St\'{e}phane Boucheron, G\'{a}bor Lugosi, and Pascal Massart.
\newblock \emph{Concentration inequalities}.
\newblock Oxford University Press, Oxford, 2013.
\newblock ISBN 978-0-19-953525-5.
\newblock \doi{10.1093/acprof:oso/9780199535255.001.0001}.
\newblock URL \url{https://doi.org/10.1093/acprof:oso/9780199535255.001.0001}.
\newblock A nonasymptotic theory of independence, With a foreword by Michel
  Ledoux.

\bibitem[Broutin and Marckert(2014)]{MR3188597}
Nicolas Broutin and Jean-Fran\c{c}ois Marckert.
\newblock Asymptotics of trees with a prescribed degree sequence and
  applications.
\newblock \emph{Random Structures Algorithms}, 44\penalty0 (3):\penalty0
  290--316, 2014.
\newblock ISSN 1042-9832.
\newblock \doi{10.1002/rsa.20463}.
\newblock URL \url{https://doi.org/10.1002/rsa.20463}.

\bibitem[Camarri and Pitman(2000)]{MR1741774}
Michael Camarri and Jim Pitman.
\newblock Limit distributions and random trees derived from the birthday
  problem with unequal probabilities.
\newblock \emph{Electron. J. Probab.}, 5:\penalty0 no. 2, 18, 2000.
\newblock ISSN 1083-6489.
\newblock \doi{10.1214/EJP.v5-58}.
\newblock URL \url{https://doi.org/10.1214/EJP.v5-58}.

\bibitem[Cournot(1847)]{cournot}
Antoine-Augustin Cournot.
\newblock \emph{De l'origine et des limites de la correspondance entre
  l'alg{\`e}bre et la g{\'e}om{\'e}trie}.
\newblock L. Hachette et Compagnie, 1847.
\newblock URL \url{https://gallica.bnf.fr/ark:/12148/bpt6k6563896n.texteImage}.

\bibitem[Duquesne and Wang(2017)]{MR3634265}
Thomas Duquesne and Minmin Wang.
\newblock Decomposition of {L}\'{e}vy trees along their diameter.
\newblock \emph{Ann. Inst. Henri Poincar\'{e} Probab. Stat.}, 53\penalty0
  (2):\penalty0 539--593, 2017.
\newblock ISSN 0246-0203.
\newblock \doi{10.1214/15-AIHP725}.
\newblock URL \url{https://doi.org/10.1214/15-AIHP725}.

\bibitem[Galton(1883)]{galton83}
Francis Galton.
\newblock \emph{Inquiries into human faculty and its development}.
\newblock Macmillan, London, 1883.
\newblock URL \url{https://archive.org/details/inquiriesintohum00galt}.

\bibitem[Janson(2012)]{MR2908619}
Svante Janson.
\newblock Simply generated trees, conditioned {G}alton-{W}atson trees, random
  allocations and condensation.
\newblock \emph{Probab. Surv.}, 9:\penalty0 103--252, 2012.
\newblock \doi{10.1214/11-PS188}.
\newblock URL \url{https://doi.org/10.1214/11-PS188}.

\bibitem[Kortchemski(2015)]{MR3335012}
Igor Kortchemski.
\newblock Limit theorems for conditioned non-generic {G}alton-{W}atson trees.
\newblock \emph{Ann. Inst. Henri Poincar\'{e} Probab. Stat.}, 51\penalty0
  (2):\penalty0 489--511, 2015.
\newblock ISSN 0246-0203.
\newblock \doi{10.1214/13-AIHP580}.
\newblock URL \url{https://doi.org/10.1214/13-AIHP580}.

\bibitem[Kortchemski(2017)]{MR3651047}
Igor Kortchemski.
\newblock Sub-exponential tail bounds for conditioned stable
  {B}ienaym\'{e}-{G}alton-{W}atson trees.
\newblock \emph{Probab. Theory Related Fields}, 168\penalty0 (1-2):\penalty0
  1--40, 2017.
\newblock ISSN 0178-8051.
\newblock \doi{10.1007/s00440-016-0704-6}.
\newblock URL \url{https://doi.org/10.1007/s00440-016-0704-6}.

\bibitem[Marzouk(2019)]{marzouk}
Cyril Marzouk.
\newblock On scaling limits of random trees and maps with a prescribed degree
  sequence.
\newblock arXiv:1903.06138 [math.PR], March 2019.
\newblock URL \url{https://arxiv.org/abs/1903.06138}.

\bibitem[Panagiotou et~al.(2016)Panagiotou, Stufler, and Weller]{MR3551197}
Konstantinos Panagiotou, Benedikt Stufler, and Kerstin Weller.
\newblock Scaling limits of random graphs from subcritical classes.
\newblock \emph{Ann. Probab.}, 44\penalty0 (5):\penalty0 3291--3334, 2016.
\newblock ISSN 0091-1798.
\newblock \doi{10.1214/15-AOP1048}.
\newblock URL \url{https://doi.org/10.1214/15-AOP1048}.

\bibitem[Pitman(2006)]{MR2245368}
J.~Pitman.
\newblock \emph{Combinatorial stochastic processes}, volume 1875 of
  \emph{Lecture Notes in Mathematics}.
\newblock Springer-Verlag, Berlin, 2006.
\newblock ISBN 978-3-540-30990-1; 3-540-30990-X.
\newblock URL \url{https://doi.org/10.1007/b11601500}.
\newblock Lectures from the 32nd Summer School on Probability Theory held in
  Saint-Flour, July 7--24, 2002, With a foreword by Jean Picard.

\bibitem[Stufler(2020)]{MR4132643}
Benedikt Stufler.
\newblock Limits of random tree-like discrete structures.
\newblock \emph{Probab. Surv.}, 17:\penalty0 318--477, 2020.
\newblock \doi{10.1214/19-PS338}.
\newblock URL \url{https://doi.org/10.1214/19-PS338}.

\end{thebibliography}

%
%
                 
\appendix

\end{document}